\def\thesection{\arabic{section}}
\def\theequation{\thesection.\arabic{equation}}
\newcommand{\Om} {\Omega}
\newcommand{\noi} {\noindent}
\markboth{\small } {\small Nonlocal singular problem in the Heisenberg group}
\def\theequation{\@arabic{\c@section}.\@arabic{\c@equation}}
\newtheorem{Theorem}{Theorem}[section]
\newtheorem{Lemma}[Theorem]{Lemma}
\newtheorem{Corollary}[Theorem]{Corollary}
\newtheorem{Remark}[Theorem]{Remark}
\newtheorem{Definition}[Theorem]{Definition}
\begin{document}

{\vspace{0.01in}}

\title{Nonlocal singular problem and associated Sobolev type inequality with extremal in the Heisenberg group}

\author{Prashanta Garain}

\maketitle

\begin{abstract}\noindent
We study a fractional 
$p$-Laplace equation involving a variable exponent singular nonlinearity in the framework of the Heisenberg group. We first establish the existence and regularity of weak solutions. In the case of a constant singular exponent, we further prove the uniqueness of solutions and characterize the extremals of a related Sobolev-type inequality. Additionally, we demonstrate a connection between the solutions of the singular problem and these extremals. To the best of our knowledge, these findings provide new insights even in the classical case 
$p=2$.
\end{abstract}

\maketitle

\noi {Keywords: Fractional $p$-Laplace equation, variable singular exponent, existence, uniqueness, regularity, extremal, Sobolev type inequality, Heisenberg group.}

\noi{\textit{2020 Mathematics Subject Classification: 35J62, 35J75, 35R03, 35R11, 35A02, 35A23, 35B45.}

\bigskip

\tableofcontents

\section{Introduction}
In this article, we investigate the existence and regularity properties of weak solutions to the following nonlocal singular elliptic problem with a variable singular exponent:
\begin{equation}\label{meqn}
L_{s,p}u(x)={f(x)}{u(x)^{-\delta(x)}}\text{ in }\Omega,\,u>0\text{ in }\Om,\,u=0\text{ in }\mathbb{H}^N\setminus\Om.
\end{equation}
Here, $0<s<1<p<\infty$, $\Omega\subset \mathbb{H}^N$ is a bounded smooth domain, and the nonlocal operator $L_{s,p}$ is given by
$$
L_{s,p}u(x)=\text{P.V.}\int_{\mathbb{H}^n}\frac{|u(x)-u(y)|^{p-2}(u(x)-u(y))}{|y^{-1}\circ x|^{Q+sp}}\,dy.
$$
where P.V. denotes the principal value and $Q = 2N + 2$ is the homogeneous dimension of the Heisenberg group $\mathbb{H}^N$. Here, $f\in L^m(\Om)\setminus\{0\}$ is a nonnegative function for some $m\geq 1$, and $\delta:\overline{\Om}\to(0,\infty)$ is a continuous function (see the main results for precise assumptions). In the case where $\delta$ is a positive constant, we establish the uniqueness of weak solutions to problem \eqref{meqn}. Furthermore, we identify the extremal functions for the associated Sobolev-type inequality and show that these extremals are closely related to the weak solutions of equation \eqref{meqn}.

We emphasize that the nonlinearity on the right-hand side of equation \eqref{meqn} features a singular behavior, characterized by the blow-up as $u \to 0^+$. This is captured by the pointwise variable exponent $\delta(x) > 0$, which introduces additional complexity into the analysis. While singular elliptic problems have been extensively studied in the Euclidean setting, both for local and nonlocal operators, much less is known in the non-Euclidean framework, particularly on the Heisenberg group.

Singular elliptic problems have garnered considerable interest in the mathematical literature over the past few decades. In the Euclidean framework, both local and nonlocal formulations have been the subject of extensive investigation.

In the local setting, a prototypical example is the equation
\begin{equation}\label{plap}
-\Delta_p u = -\text{div}(|\nabla u|^{p-2}\nabla u) = f(x)u^{-\delta(x)} \quad \text{in } \Omega,\quad u > 0 \text{ in } \Omega,\quad u = 0 \text{ on } \partial\Omega,
\end{equation}
posed in a bounded smooth domain $\Omega \subset \mathbb{R}^N$, where 
$f$ is an integrable function. When the singular exponent 
$\delta>0$ is constant, the existence of classical solutions of \eqref{plap} has been discussed in \cite{CRT, Lz}, while the existence, uniqueness along other results of weak solutions of \eqref{plap} has been addressed in \cite{BGmm, Boc-Or, Caninouni, Canino, DeCave, Garainmn} and the references therein. For the case where 
$\delta>0$ varies with position, the existence of weak solutions for 
$p=2$ was established in \cite{CMP}, with subsequent extensions to the quasilinear case found in \cite{Alves, BGM, Garainmm} and references therein. Additionally, perturbed singular nonlinearities in the $p$-Laplace framework have been studied in \cite{BGmm, Radu, GST, Oliva, OP}, among others. For constant $\delta\in(0,1)$, the inequality 
\begin{equation*}
S\left(\int_{\Om}|u|^{1-\delta}\,dx\right)^\frac{1}{q}\leq\left(\int_{\Om}|\nabla u|^p\,dx\right)^\frac{1}{p},\,\,1<q<p^{*}:={Np}/{(N-p)},
\end{equation*}
for any function $u\in W_0^{1,p}(\Om)$, $1\leq p<N$, where $S$ is the Sobolev constant, was studied in \cite{GFA} for any $1 < p < \infty$, where they showed that
\begin{equation*}
\nu(\Om):=\inf_{u\in W_0^{1,p}(\Om)\setminus\{0\}}\left\{\int_{\Om}|\nabla u|^p\,dx:\int_{\Om}|u|^{1-\delta}\,dx=1\right\} \end{equation*}
is attained by a solution $u_\delta\in W_0^{1,p}(\Om)$ of the singular $p$-Laplace equation, \begin{equation}\label{lclssbeqn} -\Delta_p u=\nu(\Om)u^{-\delta}\text{ in }\Om,\,u>0\text{ in }\Om,\,u=0\text{ on }\partial\Om. \end{equation} We also refer to \cite{BGmm22, BGdie, EP} and the references therein for related results.

In the nonlocal context, the equation
\begin{equation*}\label{fplap}
\begin{split}
(-\Delta_p)^s u(x) &= \text{P.V.} \int_{\mathbb{R}^N} \frac{|u(x) - u(y)|^{p-2}(u(x) - u(y))}{|x - y|^{N+sp}}\,dy = f(x)u^{-\delta(x)} \,\, \text{in } \Omega,\\
u&>0\text{ in }\Om,\quad u = 0 \text{ in } \mathbb{R}^N \setminus \Omega,
\end{split}
\end{equation*}
has also been the focus of recent research, for some integrable function $f$ and $1<p<\infty$ with $0<s<1$. For constant 
$\delta>0$, we refer to \cite{Nsop, Ncp, Caninoetal, Caninon, Nmn, Mukanona1, Mukanona2, Nsong, Ncvpde}, while results for variable 
$\delta$ can be found in \cite{Garaincpaa} and references therein. Moreover, in the nonlocal context, for any constant $\delta\in(0,1)$, the Sobolev-type inequality in fractional Sobolev spaces was studied by \cite{Nmn} for $u \in W_0^{s,p}(\Omega)$ with $0 < s < 1 < p < \infty$. Specifically, for a given nonnegative function $f \in L^m(\Omega) \setminus \{0\}$ with some $m \geq 1$, they showed that
\begin{equation*}
\zeta(\Omega) := \inf_{\substack{u \in W_0^{s,p}(\Omega) \setminus \{0\}}} 
\left\{ 
\int_{\Omega} \int_{\Omega} \frac{|u(x) - u(y)|^p}{|x - y|^{N + ps}} \, dx dy : \int_{\Omega} |u|^{1-\delta} f \, dx = 1 
\right\}
\end{equation*}
is attained at a solution $u_\delta \in W_0^{s,p}(\Omega)$ of the singular fractional $p$-Laplace equation
\begin{equation}\label{lclssbeqn1}
(-\Delta_p)^s u = \zeta(\Omega) f(x) u^{-\delta} \quad \text{in } \Omega, \quad u > 0 \text{ in } \Omega, \quad u = 0 \text{ in } \mathbb{R}^N \setminus \Omega.
\end{equation}
See also \cite{EPS} for related results.

In contrast, the theory in non-Euclidean settings, such as the Heisenberg group, which is also a non-commutative group or more general stratified Lie groups, remains largely undeveloped. For the local case, some recent progress has been made, see \cite{GK, GUaamp, KD, Pucciom, Pucciacv, Sahu, WW}, but in the nonlocal case, results are sparse. A notable exception is the work in \cite{GKR}, where existence results were established for constant singular exponent $\delta \in (0,1)$, and multiplicity was shown for a perturbed nonlocal problem on a stratified Lie group.

To the best of our knowledge, apart from \cite{GKR}, the problem \eqref{meqn}, especially in the setting of variable singular exponents, has not been addressed in the non-Euclidean context, even in the linear case ($p = 2$). Our goal in this work is to fill this gap by establishing existence and regularity results for weak solutions to \eqref{meqn}, through the combination of nonlocal analysis, approximation techniques, and functional tools tailored to the Heisenberg group. Moreover, we obtain uniqueness result for any constant $\delta>0$. Finally when $\delta\in(0,1)$ is a constant, we show the connection between the solutions and extremal function of the associated Sobolev type inequality.

One of the primary challenges in analyzing singular problems like \eqref{meqn} is the lack of regularity of solutions near the singularity, which prevents the direct application of standard regularity theory. To overcome this, we adopt an approximation scheme inspired by the works \cite{Boc-Or, Caninoetal, CMP} to obtain existence and regularity results. In this method, we consider regularized problems and derive uniform a priori estimates for the approximating solutions. A key ingredient in our approach is the uniform positivity of the approximate solutions on compact subsets of $\Omega$, which ensures the well-definedness of the singular term and facilitates the passage to the limit. To this end, we use a regularity result for nonlocal equations on the Heisenberg group, recently established in \cite{Picci}.

To establish the uniqueness result, we primarily follow the method developed in \cite{Caninoetal}, which is based on proving a weak comparison principle. To derive the extremal function for the associated Sobolev-type inequality, we adopt the approximation technique introduced in \cite{Nmn}.

\subsection*{Notation and Assumptions}

We collect here the main notations and assumptions used throughout the paper:
\begin{itemize}
\item For $l > 1$, the conjugate exponent is denoted by $l' = \frac{l}{l-1}$.
\item We assume $0 < s < 1$ and $1 < p < \infty$ such that $sp < Q$, where $Q = 2N + 2$ is the homogeneous dimension of $\mathbb{H}^N$.
\item The critical fractional Sobolev exponent is denoted by $p_s^* = \frac{Qp}{Q - sp}$ for $sp<Q$.
\item For $p > 1$, we define $J_p(t) = |t|^{p-2}t$ for $t \in \mathbb{R}$.
\item We denote $d\mu = \frac{dx, dy}{|y^{-1} \circ x|^{Q + sp}}$.

\item $\Om$ will denote a bounded smooth domain in $\mathbb{H}^N$ with $N\geq 2$.

\item By $\omega\Subset\Om$, we mean $\omega$ is compactly contained in $\Om$, that is $\omega\subset\overline{\omega}\subset\Om$.

\item For $u\in HW_0^{s,p}(\Om)$, we denote the norm of $u$ by $\|u\|$ as defined in \eqref{norm}, that is
$$
\|u\|:=[u]_{HW_0^{s,p}(\Om)}=\left(\int_{\mathbb{H}^N}\int_{\mathbb{H}^N}\frac{|{u(x)-u(y)}|^p}{|{\eta^{-1}\circ \xi|}^{Q+sp}}\,dx dy\right)^\frac{1}{p}.
$$
Further, if $u\in L^p(\Om)$ for some $1\leq p\leq \infty$, then the $L^p$ norm of $u$ will be denoted by $\|u\|_{L^p(\Om)}$.

\item For $k\in\mathbb{R}$, we denote by $k^+=\max\{k,0\}$, $k^-=\max\{-k,0\}$ and $k_-=\min\{k,0\}$.

\item For a function $F$ defined over a set $S$ and some constants $c$ and $d$, by $c\leq F\leq d$ in $S$, we mean $c\leq F\leq d$ almost everywhere in $S$. 

\item We use $C$ to denote a generic positive constant, which may vary from one occurrence to another, even within the same line. When the constant depends on specific parameters $r_1,r_2,\cdots,r_k$, we indicate this by writing $C=C(r_1,r_2,\cdots,r_k)$.
\end{itemize}
Before presenting our main results, we introduce the condition $(P_{\epsilon,\delta_*})$. To the best of our knowledge, the results established in this work are new even in the case of a constant singular exponent $\delta$ and $p=2$.\\
\textbf{Condition $(P_{\epsilon,\delta_*})$:} A continuous function $\delta : \overline{\Omega} \to (0,\infty)$ is said to satisfy the condition $(P_{\epsilon,\delta_*})$ if there exist constants $\delta_* \geq 1$ and $\epsilon > 0$ such that $\delta(x) \leq \delta^*$ for every $x \in \Omega_\epsilon$, where
\[
\Omega_\epsilon := \{ y \in \Omega : \mathrm{dist}(y, \partial \Omega) < \epsilon \}.
\]
Further, we define
\begin{equation}\label{deltanbd2}
\omega_\epsilon=\Omega\setminus\overline{\Omega_\epsilon},\quad\epsilon>0.
\end{equation}

\subsection{Existence results} 
\begin{Theorem}\label{varthm1}(Variable singular exponent)
 Assume that $\delta:\overline{\Om}\to(0,\infty)$ is a continuous function satisfying the condition $(P_{\epsilon,\delta_*})$ for some $\epsilon>0$ and for some $\delta_*\geq 1$. Then the problem \eqref{meqn} admits a weak solution $u\in HW_{\mathrm{loc}}^{s,p}(\Om)\cap L^{p-1}(\Om)$ such that $u^\frac{\delta_{*}+p-1}{p}\in HW_0^{s,p}(\Omega)$, provided $f\in L^m(\Omega)\setminus\{0\}$ is nonnegative, where $m=
\Big(\frac{(\delta_{*}+p-1)p_s^{*}}{p\delta_{*}}\Big)^{'}$.
\end{Theorem}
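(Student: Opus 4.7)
The approach is the approximation scheme of \cite{Boc-Or, Caninoetal, CMP}, adapted to the Heisenberg framework. For each $n \in \mathbb{N}$, I would truncate $f_n = \min\{f,n\}$ and consider the non-singular regularized problem
\begin{equation*}
L_{s,p} u_n = \frac{f_n(x)}{(u_n^+ + 1/n)^{\delta(x)}} \text{ in } \Omega, \qquad u_n = 0 \text{ in } \mathbb{H}^N\setminus\Omega.
\end{equation*}
The right-hand side is bounded in $L^\infty$, so standard monotone operator/variational arguments produce a nonnegative $u_n \in HW_0^{s,p}(\Omega) \cap L^\infty(\Omega)$. A weak comparison principle for $L_{s,p}$ makes the sequence monotone, $u_n \leq u_{n+1}$, and the interior regularity / strong minimum principle of \cite{Picci} supplies the crucial uniform lower bound $u_n \geq c_\omega > 0$ on every $\omega \Subset \Omega$, with $c_\omega$ independent of $n$.

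The key uniform estimate comes from testing with $\phi_n = u_n^{\delta_*}$, which is admissible because $u_n \in L^\infty$. The pointwise inequality
\begin{equation*}
J_p(a-b)\,(a^{\delta_*} - b^{\delta_*}) \geq C_{p,\delta_*}\,\bigl|a^{(\delta_*+p-1)/p} - b^{(\delta_*+p-1)/p}\bigr|^p, \qquad a,b \geq 0,
\end{equation*}
then bounds the left-hand side of the weak formulation from below by $C\|u_n^{(\delta_*+p-1)/p}\|^p$. I would split the resulting right-hand integral over $\omega_\epsilon$ and $\Omega_\epsilon$: on $\omega_\epsilon$ the uniform interior positivity together with continuity of $\delta$ yields a uniform bound by $C(\omega_\epsilon)\|f\|_{L^1(\Omega)}$; on $\Omega_\epsilon$ the hypothesis $\delta(x) \leq \delta_*$ gives $u_n^{\delta_*}/(u_n+1/n)^{\delta(x)} \leq C(1 + u_n^{\delta_*})$. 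Hölder's inequality with $f \in L^m$ and the Heisenberg fractional Sobolev embedding $HW_0^{s,p}(\Omega) \hookrightarrow L^{p_s^*}(\Omega)$ convert the resulting $\int f u_n^{\delta_*}$ term into $\|u_n^{(\delta_*+p-1)/p}\|^{p\delta_*/(\delta_*+p-1)}$; the precise value of $m$ in the theorem is the unique exponent for which this matching occurs. Since $p\delta_*/(\delta_*+p-1) < p$, Young's inequality absorbs this sublinear term and yields a uniform bound on $\|u_n^{(\delta_*+p-1)/p}\|$.

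The Sobolev embedding then gives $u_n$ uniformly bounded in $L^{(\delta_*+p-1)p_s^*/p}(\Omega)$, hence in $L^{p-1}(\Omega)$ since $\delta_* \geq 1$. Pointwise monotonicity forces a.e.\ convergence to some $u$, and Fatou plus reflexivity of $HW_0^{s,p}$ give $u^{(\delta_*+p-1)/p} \in HW_0^{s,p}(\Omega)$ and $u \in L^{p-1}(\Omega)$. For the local regularity $u \in HW_{\loc}^{s,p}(\Omega)$ I would test the $u_n$-equation with $\eta^p u_n$ for a cutoff $\eta \in C_c^\infty(\Omega)$: uniform positivity on $\supp\eta$ controls the singular right-hand side, while the standard fractional Caccioppoli-type manipulations adapted to the Heisenberg kernel produce the local $HW^{s,p}$-bound. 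Passage to the limit in the weak formulation against test functions supported in compact subsets of $\Omega$ then follows from dominated convergence, with the interior positivity providing an integrable dominant for the singular integrand.

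The main obstacle is the coupling of the variable exponent $\delta$ with the boundary decay of $u_n$. The test function $u_n^{\delta_*}$ is engineered so that on $\Omega_\epsilon$, where $u_n \to 0$ but $\delta \leq \delta_*$ by hypothesis, the singular integrand is tamed with precisely the summability matched by $L^m$; meanwhile on $\omega_\epsilon$ the quantitative lower bound from \cite{Picci} absorbs any larger values that the continuous $\delta$ can attain. Both ingredients are indispensable: condition $(P_{\epsilon,\delta_*})$ alone would leave the interior uncontrolled, and interior positivity alone would not cope with the boundary behaviour, where Euclidean barrier arguments are not readily available on $\mathbb{H}^N$.
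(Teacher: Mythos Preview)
Your overall strategy matches the paper's: the same approximation scheme, the same test function $u_n^{\delta_*}$ for the key a-priori estimate, and the same decomposition of $\Omega$ into $\Omega_\epsilon$ (where $(P_{\epsilon,\delta_*})$ applies) and $\omega_\epsilon$ (where uniform positivity applies). One correction: on $\omega_\epsilon$ the integrand still carries the factor $u_n^{\delta_*}$, so the bound there is not $C\|f\|_{L^1(\Omega)}$ but rather $C\int_{\omega_\epsilon} f\,u_n^{\delta_*}\,dx$; the paper simply folds this into the $\int_\Omega f\,u_n^{\delta_*}\,dx$ term before applying H\"older and Sobolev.

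The genuine difference is in how you obtain $u \in HW^{s,p}_{\mathrm{loc}}(\Omega)$. You propose a Caccioppoli-type argument via the test function $\eta^p u_n$; the paper instead uses a purely algebraic shortcut (Lemma~\ref{lem:dino}): since $u \geq C(K) > 0$ on each compact $K$ and $q := (\delta_*+p-1)/p > 1$, the inequality $|a-b| \leq C(K)^{1-q}\,|a^q - b^q|$ for $a,b \geq C(K)$ transfers the already-established global seminorm of $u^q$ directly to a local seminorm of $u$. This is cleaner and avoids the nonlocal cutoff manipulations, which would force you to control cross-interactions between $\mathrm{supp}\,\eta$ and its complement. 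The same algebraic inequality also drives the paper's passage to the limit in the nonlocal term: your ``dominated convergence'' remark covers only the singular right-hand side, while for the left-hand side the paper carries out a more delicate argument combining \eqref{eq:gerry1}--\eqref{eq:gerry2}, Lemma~\ref{lem:dino}, and Vitali's theorem to justify the convergence of $\int\int J_p(u_n(x)-u_n(y))(\phi(x)-\phi(y))\,d\mu$.
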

Our next theorem provide existence results in the constant singular exponent case.
\begin{Theorem}\label{thm1}(Constant singular exponent)
Assume that $\delta:\overline{\Om}\to(0,\infty)$ is a constant function.
Assume that $f\in L^m(\Omega)\setminus\{0\}$ is nonnegative for some $m$.
Then the problem \eqref{meqn} admits a weak solution $u$ in each of the following cases:
\begin{enumerate}
    \item[(a)] $0<\delta<1,\,m=\big(\frac{p_{s}^{*}}{1-\delta}\big)'$ with $u\in HW_0^{s,p}(\Omega)$.
    \item[(b)] $\delta=1,\,m=1$ with $u\in HW_0^{s,p}(\Omega)$.
    \item[(c)] $\delta>1,\,m=1$ with $u\in HW_{\mathrm{loc}}^{s,p}(\Om)\cap L^{p-1}(\Om)$ such that $u^\frac{\delta+p-1}{p}\in HW_0^{s,p}(\Omega)$.  
\end{enumerate}
\end{Theorem}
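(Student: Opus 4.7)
The plan is to adapt the approximation scheme of \cite{Boc-Or, Caninoetal, CMP} to the Heisenberg group. Set $f_n := \min\{f,n\}$ and, for each $n \in \mathbb{N}$, solve the regularized problem
\[
L_{s,p}u_n = \frac{f_n(x)}{(u_n + 1/n)^\delta}\text{ in }\Om, \qquad u_n = 0 \text{ in }\mathbb{H}^N \setminus \Om.
\]
Existence of a nonnegative, bounded $u_n \in HW_0^{s,p}(\Om)$ follows from standard monotone or Schauder-type arguments since the right-hand side is bounded by $n^{\delta+1}$. A weak comparison principle yields monotonicity $0 \le u_n \le u_{n+1}$ and, via comparison with $u_1$ combined with the positivity/regularity estimates on the Heisenberg group from \cite{Picci}, the crucial uniform lower bound $u_n \ge c_\omega > 0$ on each $\omega \Subset \Om$.

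Next, I would derive uniform a priori estimates separately in each regime. In case $(a)$ with $0<\delta<1$, testing with $u_n$ itself and applying H\"older's inequality with exponents $m = \bigl(p_s^*/(1-\delta)\bigr)'$ and $p_s^*/(1-\delta)$, followed by the fractional Sobolev embedding on $\mathbb{H}^N$, gives
\[
\|u_n\|^p \le \int_\Om f\,u_n^{1-\delta}\,dx \le C\|f\|_{L^m(\Om)}\|u_n\|^{1-\delta},
\]
and hence a uniform bound in $HW_0^{s,p}(\Om)$. In case $(b)$ with $\delta=1$, testing with $u_n$ directly yields $\|u_n\|^p \le \|f\|_{L^1(\Om)}$. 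In case $(c)$ with $\delta>1$, the natural test function is $u_n^\delta \in HW_0^{s,p}(\Om) \cap L^\infty(\Om)$; the right-hand side is then dominated by $\|f\|_{L^1(\Om)}$, while the left-hand side is controlled from below using the pointwise algebraic inequality
\[
|a-b|^{p-2}(a-b)(a^\delta - b^\delta) \ge c_{p,\delta}\bigl|a^{(\delta+p-1)/p} - b^{(\delta+p-1)/p}\bigr|^p, \qquad a,b \ge 0,
\]
yielding a uniform bound on $u_n^{(\delta+p-1)/p}$ in $HW_0^{s,p}(\Om)$. The fractional Sobolev embedding then furnishes the $L^{p-1}(\Om)$ bound on $u_n$, and the local estimate $u_n \in HW_\loc^{s,p}(\Om)$ follows since on any $\omega \Subset \Om$ the map $t \mapsto t^{p/(\delta+p-1)}$ is Lipschitz on $[c_\omega^{(\delta+p-1)/p},\infty)$ and $u_n$ is the composition of this map with $u_n^{(\delta+p-1)/p}$.

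Finally, to pass to the limit: by monotonicity $u_n \uparrow u$ pointwise a.e., while the uniform estimates deliver weak convergence of $u_n$ (or of $u_n^{(\delta+p-1)/p}$ in case $(c)$) in the relevant Sobolev space together with strong local $L^p$ convergence. For a test function $\varphi \in C_c^\infty(\Om)$, the singular right-hand side converges by dominated convergence thanks to the uniform bound $u_n \ge c_\omega$ on $\supp\varphi$. For the nonlocal $p$-Laplace term, the uniform estimates produce weak convergence of $J_p(u_n(x)-u_n(y))$ in the appropriate $L^{p'}(d\mu)$ space, and pairing with $\varphi(x)-\varphi(y)$ (which lies strongly in $L^p(d\mu)$ since $\varphi$ is smooth and compactly supported in $\Om$) identifies the limit.

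The main obstacle will be securing the uniform positivity $u_n \ge c_\omega$ on compact subsets — without which neither the singular right-hand side can be passed to the limit nor the local Sobolev regularity in case $(c)$ recovered — together with adapting the nonlocal convergence arguments, which in the Euclidean setting rely on almost-everywhere convergence of $J_p$-type quantities, to the invariant kernel $|y^{-1}\circ x|^{Q+sp}$ of the noncommutative Heisenberg group.
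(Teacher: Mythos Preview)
Your proposal is essentially correct and follows the paper's approach closely: the approximation scheme, the monotonicity and uniform positivity of $u_n$ via \cite{Picci}, the a priori estimates obtained by testing with $u_n$ (cases $(a)$, $(b)$) or $u_n^\delta$ (case $(c)$) together with the algebraic inequality you wrote, and the dominated-convergence passage on the right-hand side all match the paper exactly.

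The one point where your sketch is too optimistic is the passage to the limit in the nonlocal left-hand side in case $(c)$. Your stated mechanism---weak convergence of $J_p(u_n(x)-u_n(y))$ in an ``appropriate'' $L^{p'}(d\mu)$ space, then pair with $\varphi(x)-\varphi(y)$---works verbatim in cases $(a)$ and $(b)$ because $\{u_n\}$ is uniformly bounded in $HW_0^{s,p}(\Om)$, hence $J_p(u_n(x)-u_n(y))|y^{-1}\circ x|^{-(Q+sp)/p'}$ is bounded in $L^{p'}(\mathbb{H}^{2N})$. In case $(c)$, however, the uniform bound is only on $u_n^{(\delta+p-1)/p}$ in $HW_0^{s,p}(\Om)$; this does \emph{not} give a uniform global $HW^{s,p}$ bound on $u_n$, so $J_p(u_n(x)-u_n(y))$ is not uniformly bounded in $L^{p'}(\mathbb{H}^{2N},d\mu)$, and the pairing with $\varphi(x)-\varphi(y)$ (which lives on the non-compact set $(\mathrm{supp}\,\varphi\times\mathbb{H}^N)\cup(\mathbb{H}^N\times\mathrm{supp}\,\varphi)$) cannot be justified by weak convergence alone. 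The paper handles this by estimating the difference of the two sides directly: on a large compact $\mathcal{K}\subset\mathbb{H}^{2N}$ one uses Vitali's theorem, while on $\mathbb{H}^{2N}\setminus\mathcal{K}$ one combines the pointwise inequality $|a-b|^p\le C(\omega)^{1-\delta}\bigl|a^{(\delta+p-1)/p}-b^{(\delta+p-1)/p}\bigr|^p$ (valid on $\mathcal{Q}_\varphi$ thanks to the uniform lower bound $u_n\ge C(\mathrm{supp}\,\varphi)$) with the uniform $HW_0^{s,p}$ bound on $u_n^{(\delta+p-1)/p}$ and the $L^p(d\mu)$-integrability of $\varphi(x)-\varphi(y)$ to make the tail small. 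This is the additional ingredient your plan needs in case $(c)$; the obstacle is not the Heisenberg kernel per se but the merely local nature of the Sobolev bound on $u_n$.
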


\begin{Remark}\label{exrmk}
If both $\delta $ and $\delta_*$ are strictly greater than $1$, then we observe that the solutions obtained in Theorem \ref{varthm1} and \ref{thm1}-$(c)$ also belong to the spaces $HW_0^{\frac{sp}{\delta_*+p-1},\delta_*+p-1}(\Om)$ and $HW_0^{\frac{sp}{\delta+p-1},\delta+p-1}(\Om)$ respectively. Indeed, let $u$ be such solution, then $u^\frac{\delta_*+p-1}{p}\in HW_0^{s,p}(\Omega)$ by Theorem \ref{varthm1}. Since $\frac{\delta_*+p-1}{p}>1$, using the H\"olderianity of the mapping $t\to t^\frac{\delta_*+p-1}{p}$, we have
$$
\frac{|u(x)-u(y)|^p}{|y^{-1}\circ x|^{Q+sp}}\leq \frac{\Big|u\frac{\delta_*+p-1}{p}(x)-u\frac{\delta_*+p-1}{p}(y)\Big|^p}{|y^{-1}\circ x|^{Q+sp}}
$$
for every $x,y\in\mathbb{H}^N$, which proves $u\in HW_0^{\frac{sp}{\delta_*+p-1},\delta_*+p-1}(\Om)$. The proof is same for $\delta>1$. 
\end{Remark}

\subsection{Regularity results}
\begin{Theorem}\label{regthm}(Variable singular exponent)
Suppose $\delta:\overline{\Omega}\to(0,\infty)$ is a continuous function satisfying $(P_{\epsilon,\delta_*})$ for some $\delta_*\geq 1$ and for some $\epsilon>0$. Assume that $u$ is the weak solution of the problem \eqref{meqn} given by Theorem \ref{varthm1} and $f\in L^m(\Om)\setminus\{0\}$ be nonnegative for some $m$. Then 
\begin{enumerate}
    \item[(i)] if $m\in\big[\frac{Q(\delta_*+p-1)}{Q(p-1)+\delta_* sp},\frac{Q}{sp}\big)$, then $u\in L^t(\Om)$, where $t=m'\gamma$, with $\gamma=\frac{N(p-1)(m-1)}{m(N-sp)-N(m-1)}$.
    \item[(ii)] if $m>\frac{Q}{sp}$, then $u\in L^\infty(\Om)$.
\end{enumerate}
\end{Theorem}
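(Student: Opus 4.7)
The plan is to run a Stampacchia-type argument on the approximating sequence $\{u_n\}$ used in the proof of Theorem \ref{varthm1}, derive bounds that are uniform in $n$, and then transfer them to $u$ via Fatou's lemma. In both cases the key ingredients are: the discrete chain-rule inequality
$$J_p(a-b)(a^\alpha - b^\alpha) \geq C_{\alpha,p}\bigl|a^{(\alpha+p-1)/p} - b^{(\alpha+p-1)/p}\bigr|^p, \qquad a,b\geq 0,\ \alpha>0,$$
the Heisenberg fractional Sobolev embedding $HW_0^{s,p}(\Om)\hookrightarrow L^{p_s^*}(\Om)$, and the uniform positivity of $u_n$ on compact subsets of $\Om$, which combined with the condition $(P_{\epsilon,\delta_*})$ keeps the singular factor $u_n^{-\delta(x)}$ under control even near $\partial\Om$.

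For part \textbf{(i)}, I would test the approximating equation with $u_n^{\alpha}$, where $\alpha=\alpha(m,\delta_*,p,s,Q)$ is tuned so that the two exponents produced on the left and right match. By the chain-rule inequality and Sobolev embedding, the left-hand side is controlled below by $C\|u_n\|^{\alpha+p-1}_{L^{(\alpha+p-1)p_s^*/p}(\Om)}$. The right-hand side $\int_\Om f\, u_n^{\alpha-\delta(x)}\,dx$ is split using \eqref{deltanbd2} into an interior piece on $\omega_\epsilon$ (where $u_n \geq c(\omega_\epsilon)>0$ uniformly) and a boundary piece on $\Om_\epsilon$ (where $\delta(x)\leq \delta_*$); on the latter, H\"older's inequality with $f\in L^m$ gives $\|f\|_{L^m}\|u_n\|^{\alpha-\delta_*}_{L^{(\alpha-\delta_*)m'}}$. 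Choosing $\alpha$ to solve
$$\frac{(\alpha+p-1)p_s^*}{p}=(\alpha-\delta_*)\, m'$$
yields a self-improving estimate for $\|u_n\|_{L^{m'\gamma}}$ uniform in $n$, with $\gamma$ as in the statement once $Q$ is used as the relevant dimension. The assumption $m\geq \frac{Q(\delta_*+p-1)}{Q(p-1)+\delta_* sp}$ is exactly the threshold that guarantees $\alpha\geq \delta_*$ so that this test function is admissible, and $m<Q/(sp)$ keeps $\gamma$ finite.

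For part \textbf{(ii)}, I would run the classical Stampacchia truncation argument. Testing with $(u_n-k)^+$ for $k\geq 1$ and using the fractional $p$-Laplacian inequality
$$J_p(u_n(x)-u_n(y))\bigl((u_n(x)-k)^+-(u_n(y)-k)^+\bigr)\geq |(u_n(x)-k)^+-(u_n(y)-k)^+|^p$$
together with Sobolev embedding produces
$$\|(u_n-k)^+\|^{p}_{L^{p_s^*}(\Om)}\leq C\int_{A_k^n} f\, u_n^{-\delta(x)}(u_n-k)\,dx,$$
where $A_k^n=\{u_n>k\}$. Since $k\geq 1$, the singular factor satisfies $u_n^{-\delta(x)}\leq 1$ on $A_k^n$. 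Applying H\"older with $f\in L^m$ and $m>Q/(sp)$ converts this into the standard super-linear recursion between $\int_{A_k^n}(u_n-k)\,dx$ and $|A_k^n|$; the Stampacchia lemma then yields a threshold $k^*=k^*(\|f\|_{L^m},|\Om|,p,s,Q)$, independent of $n$, beyond which $|A_{k^*}^n|=0$, and passing to the limit gives $u\in L^\infty(\Om)$.

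The principal technical difficulty is ensuring that every estimate above is \emph{uniform in $n$}, which hinges on controlling the singular term $u_n^{-\delta(x)}$ throughout $\Om$. In the interior this uses the uniform lower bound $u_n\geq c(\omega)>0$ on each $\omega\Subset\Om$, a by-product of the construction in Theorem \ref{varthm1} combined with the local regularity for nonlocal equations on $\mathbb{H}^N$ from \cite{Picci}; near $\partial\Om$, where $u_n$ can vanish, the cap $\delta(x)\leq \delta_*$ supplied by $(P_{\epsilon,\delta_*})$ is essential because it bounds the strength of the singularity. Balancing these two regimes with the admissible choice of $\alpha$ in part (i) is the most delicate point; once it is in place, the rest of the argument is a direct transcription of the Euclidean Stampacchia/Moser machinery to the Heisenberg setting.
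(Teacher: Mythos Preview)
Your overall strategy matches the paper's: work at the level of the approximations $u_n$, test with a power $u_n^\gamma$ in part (i) and with $(u_n-k)^+$ in part (ii), use the discrete chain-rule inequality (Lemma~\ref{BPalg}) together with the Sobolev embedding, and control the singular factor by combining the uniform interior lower bound with the cap $\delta\leq\delta_*$ on $\Om_\epsilon$. Part (ii) is essentially identical to what the paper does.

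In part (i), however, your matching condition is off, and this is not just cosmetic. After the splitting you describe, the boundary strip $\Om_\epsilon$ must itself be divided into $\{u_n\leq 1\}$ and $\{u_n>1\}$: on $\{u_n\leq 1\}\cap\Om_\epsilon$ one simply uses $u_n^{\gamma-\delta(x)}\leq 1$ (this is precisely where $\gamma\geq\delta_*$ enters), contributing only $\|f\|_{L^1(\Om)}$; on $\{u_n>1\}\cap\Om_\epsilon$ and on the interior $\omega_\epsilon$ the dominant factor is $u_n^\gamma$, \emph{not} $u_n^{\gamma-\delta_*}$. Hence H\"older's inequality produces $\|f\|_{L^m}\bigl(\int_\Om u_n^{\gamma m'}\bigr)^{1/m'}$, and the correct balance is
\[
\frac{(\gamma+p-1)\,p_s^*}{p}\;=\;\gamma\,m',
\]
which gives exactly the $\gamma$ of the statement. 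With your equation $\tfrac{(\alpha+p-1)p_s^*}{p}=(\alpha-\delta_*)m'$ the interior term $\int_{\omega_\epsilon} f\,u_n^\alpha$ (and the $\{u_n>1\}$ portion of the boundary term) carries the higher exponent $\alpha m'$ and is not absorbed, so the estimate does not close. Relatedly, the lower threshold $m\geq \tfrac{Q(\delta_*+p-1)}{Q(p-1)+\delta_* sp}$ is exactly the condition $\gamma\geq\delta_*$ for the \emph{correct} $\gamma$ above; with your $\alpha$ that inequality is automatic and the threshold would play no role, which should have signalled the mismatch.
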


\begin{Theorem}\label{regthm1}(Constant singular exponent)
Let $\delta:\overline{\Omega}\to(0,1)$ be a constant function. Suppose $u$ is the weak solution of the problem \eqref{meqn} given by Theorem \ref{thm1}-$(a)$ and $f\in L^m(\Om)\setminus\{0\}$ is nonnegative for some $m$. Then 
\begin{enumerate}
    \item[(i)] if $m\in\big(\big(\frac{p_s^{*}}{1-\delta}\big)',\frac{Q}{sp}\big)$, then $u\in L^t(\Om)$, where $t=p_s^{*}\gamma$, with $\gamma=\frac{(\delta+p-1)\gamma'}{pm'-p_s^{*}}$.
    \item[(ii)] if $m>\frac{Q}{sp}$, then $u\in L^\infty(\Om)$.
\end{enumerate}
\end{Theorem}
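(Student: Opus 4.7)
The plan is to run a Moser/Stampacchia-type iteration based on testing the equation with a suitable power of the solution. For $\beta\geq 1$, using (truncations of) $u^\beta$ as a test function and applying the standard nonlocal algebraic inequality
$$
|a-b|^{p-2}(a-b)(a^\beta-b^\beta)\geq C(\beta,p)\,\bigl|a^{(\beta+p-1)/p}-b^{(\beta+p-1)/p}\bigr|^p,\quad a,b\geq 0,
$$
we obtain, after applying the fractional Sobolev embedding $HW_0^{s,p}(\Omega)\hookrightarrow L^{p_s^\ast}(\Omega)$ on the Heisenberg group and H\"older's inequality with exponents $m,m'$ on the singular right-hand side, the master estimate
$$
\left(\int_\Omega u^{(\beta+p-1)p_s^\ast/p}\,dx\right)^{p/p_s^\ast}\leq C\,\|f\|_{L^m(\Omega)}\left(\int_\Omega u^{(\beta-\delta)m'}\,dx\right)^{1/m'}.
$$
The condition $\delta<1$ guarantees $\beta-\delta>0$ for $\beta\geq 1$, so no negative-power integrability is required; the starting integrability $u\in L^{p_s^\ast}(\Omega)$ is given by $u\in HW_0^{s,p}(\Omega)$ from Theorem \ref{thm1}(a).

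For part (i), I would bootstrap this inequality. Set $r_0=p_s^\ast$ and inductively pick $\beta_k$ so that $(\beta_k-\delta)m'=r_k$, giving $r_{k+1}=(\beta_k+p-1)p_s^\ast/p$. Since $m<Q/(sp)$ forces $p_s^\ast/(pm')<1$, the sequence $\{r_k\}$ is increasing and converges to the unique fixed point
$$
r_\infty=\frac{(\delta+p-1)\,p_s^\ast\,m'}{pm'-p_s^\ast},
$$
which matches the value $t=p_s^\ast\gamma$ declared in the theorem (with $\gamma=(\delta+p-1)m'/(pm'-p_s^\ast)$, interpreting the apparent $\gamma'$ in the statement as a typo for $m'$). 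The lower bound $m>(p_s^\ast/(1-\delta))'$ is precisely what makes $r_1>r_0$ so that the iteration actually improves on the base integrability. A one-shot equivalent is to pick $\beta$ so that $(\beta-\delta)m'=(\beta+p-1)p_s^\ast/p=t$, absorb the resulting $L^t$ norm into the left-hand side, and read off $u\in L^t(\Omega)$.

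For part (ii), the same master inequality is used, but now $m>Q/(sp)$ gives $p_s^\ast/(pm')>1$, so iterating $\beta_k$ as above produces a geometrically growing $r_k$. Tracking the constants $C_k$ at each step and showing that $\sum\frac{\log C_k}{r_{k+1}}$ converges (a routine computation because $r_k$ grows like a geometric progression), one concludes $\|u\|_{L^\infty(\Omega)}\leq C\bigl(\|u\|_{L^{p_s^\ast}(\Omega)},\|f\|_{L^m(\Omega)}\bigr)$, which is the claimed $L^\infty$ bound. Here one can either start the iteration from $L^{p_s^\ast}$ directly, or, if needed, use part~(i) to raise the starting integrability.

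The main obstacle I anticipate is the rigorous use of the power test function: $u^\beta$ is not a priori admissible because $u$ may vanish on $\partial\Omega$ and the singular right-hand side could otherwise be non-integrable. I would therefore perform the iteration at the level of the approximate sequence $u_n$ constructed in the existence proof of Theorem \ref{thm1}(a), which is in $HW_0^{s,p}(\Omega)\cap L^\infty(\Omega)$ and uniformly bounded below on compact subsets of $\Omega$. This lets me test with $u_n^\beta$ legitimately, derive $n$-uniform estimates, and pass to the limit by Fatou's lemma, exploiting $u_n\to u$ in $HW_{\mathrm{loc}}^{s,p}(\Omega)$ and a.e.\ convergence. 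The only remaining bookkeeping is verifying that the H\"older step and the algebraic inequality have the right dependence on $\beta$ so the iteration constants are controlled; this is standard and follows the pattern of \cite{Nmn,Garaincpaa} adapted to the sub-Riemannian geometry via the Heisenberg Sobolev inequality.
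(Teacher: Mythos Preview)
Your proposal is correct. For part~(i), your ``one-shot equivalent'' is exactly what the paper does: work at the level of the approximations $u_n$, test with $u_n^{p\gamma-p+1}$ where $\gamma$ is chosen so that $(p\gamma-p+1-\delta)m'=p_s^{*}\gamma$, apply Lemma~\ref{BPalg} and the Sobolev embedding, and absorb using $p/p_s^{*}>1/m'$. Your bootstrap description is just an iterative repackaging of the same idea; the paper goes straight to the fixed-point exponent. You are also right that the $\gamma'$ in the statement should be $m'$.

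For part~(ii) the paper takes a different route: instead of Moser iteration with powers $u_n^{\beta_k}$ and summing $\sum(\log C_k)/r_{k+1}$, it runs the Stampacchia level-set argument, testing with $(u_n-k)^+$ for $k\geq1$, estimating $|A(h)|\leq C(h-k)^{-p_s^{*}}|A(k)|^{\alpha}$ with $\alpha>1$ (this is where $m>Q/(sp)$ enters), and invoking \cite[Lemma~B.1]{Stam}. Both methods work here; the Stampacchia approach avoids tracking the $\beta$-dependence of the algebraic constant through an infinite iteration, while your Moser route would yield a more explicit bound of the form $\|u\|_{L^\infty}\leq C\|u\|_{L^{p_s^{*}}}^{\theta}\|f\|_{L^m}^{1-\theta}$-type. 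Your plan to carry everything out on the $u_n$'s and pass to the limit via Fatou is exactly the paper's device.
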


\begin{Theorem}\label{regthm2}(Constant singular exponent)
Let $\delta\equiv 1$ in $\overline{\Om}$. Suppose $u$ is the weak solution of the problem \eqref{meqn} given by Theorem \ref{thm1}-$(b)$ and $f\in L^m(\Om)\setminus\{0\}$ is nonnegative for some $m$. Then 
\begin{enumerate}
    \item[(i)] if $m\in\big(1,\frac{Q}{sp}\big)$, then $u\in L^t(\Om)$, where $t=p_s^{*}\gamma$, with $\gamma=\frac{pm'}{pm'-p_s^{*}}$.
    \item[(ii)] if $m>\frac{Q}{sp}$, then $u\in L^\infty(\Om)$.
\end{enumerate}
\end{Theorem}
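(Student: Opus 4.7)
The plan is to derive uniform a priori bounds on the approximating sequence $\{u_n\}$ from the proof of Theorem \ref{thm1}-(b)---which solves a regularized equation of the form $L_{s,p} u_n = f_n/(u_n + 1/n)$ for a truncation $f_n \leq f$, is uniformly positive on compacts, and converges to $u$ a.e.\ and in suitable Lebesgue spaces---and then to pass to the limit using Fatou's lemma.

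For part (i), I would test the regularized equation with $\phi_n = u_n^\beta$ for a parameter $\beta > 1$. Combining the pointwise algebraic inequality
$$
|a-b|^{p-2}(a-b)(a^\beta - b^\beta) \;\geq\; c(\beta,p)\bigl|a^{\frac{\beta+p-1}{p}} - b^{\frac{\beta+p-1}{p}}\bigr|^p, \qquad a,b \geq 0,
$$
with the fractional Sobolev embedding on $\mathbb{H}^N$ (applied to $u_n^{(\beta+p-1)/p} \in HW_0^{s,p}(\Om)$) on the left, and H\"older's inequality with exponent $m$ on the right, yields
$$
\Bigl(\int_\Om u_n^{(\beta+p-1) p_s^*/p}\,dx\Bigr)^{p/p_s^*} \leq C \|f\|_{L^m(\Om)} \Bigl(\int_\Om u_n^{(\beta-1)m'}\,dx\Bigr)^{1/m'}.
$$
Choosing $\beta > 1$ to balance exponents via $(\beta+p-1)p_s^*/p = (\beta-1)m' =: t$ gives exactly $t = p_s^* \gamma$ with $\gamma = pm'/(pm' - p_s^*)$. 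The hypothesis $m < Q/(sp)$ is precisely what makes $p/p_s^* - 1/m' > 0$, so that the $u_n$-power on the right can be absorbed into the left, producing the uniform bound $\|u_n\|_{L^t(\Om)} \leq C$.

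For part (ii), I would instead use the Stampacchia truncation $\phi_n = G_k(u_n) := (u_n - k)^+$ as test function. Since $G_k(u_n)/(u_n + 1/n) \leq 1$ and is supported in $A_k := \{u_n > k\}$, the singular right-hand side is controlled by $\int_{A_k} f \, dx \leq \|f\|_{L^m} |A_k|^{1/m'}$, while monotonicity of the fractional $p$-Laplacian together with Sobolev produce $C\|G_k(u_n)\|_{L^{p_s^*}}^p$ from the left. The trivial bound $\|G_k(u_n)\|_{L^{p_s^*}} \geq (h-k)|A_h|^{1/p_s^*}$ for $h > k$ then yields the nonlinear level-set estimate
$$
|A_h| \;\leq\; \frac{C\,\|f\|_{L^m}^{p_s^*/p}}{(h-k)^{p_s^*}} \, |A_k|^{p_s^*/(pm')},
$$
whose exponent $p_s^*/(pm')$ exceeds $1$ exactly when $m > Q/(sp)$. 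Stampacchia's classical lemma then gives some $k_0$, independent of $n$, with $|\{u_n > k_0\}| = 0$, hence $\|u_n\|_{L^\infty(\Om)} \leq k_0$.

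The main obstacle, as in other nonlocal singular problems, is justifying $u_n^\beta$ and $G_k(u_n)$ as admissible test functions in the weak formulation integrated over all of $\mathbb{H}^N$, and in particular controlling the tail contributions of the form $\int_\Om\int_{\mathbb{H}^N \setminus \Om} u_n(x)^{\beta+p-1}/|y^{-1}\circ x|^{Q+sp}\,dy\,dx$. These are handled by the boundedness of each $u_n$ (inherited from the regularization) together with the same algebraic inequality specialized to $b = 0$, while the uniform positivity of $u_n$ on compacts established in the existence proof keeps $1/(u_n + 1/n)$ bounded throughout the estimates. Fatou's lemma then transfers both uniform bounds to $u$ itself.
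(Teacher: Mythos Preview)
Your proposal is correct and follows essentially the same route as the paper: for (i) you test the approximated equation with $u_n^\beta$ (where $\beta=p\gamma-p+1$, identical to the paper's choice), apply the algebraic inequality of Lemma~\ref{BPalg} and the Sobolev embedding, and balance exponents; for (ii) you use the Stampacchia truncation $(u_n-k)^+$ and the level-set recursion exactly as the paper does (via its reference to Theorem~\ref{regthm}-(ii)). Your estimate in (ii) is in fact slightly cleaner than the paper's, since for $\delta=1$ you can bound $(u_n-k)^+/(u_n+\tfrac1n)\leq 1$ directly and apply a single H\"older inequality with exponent $m$, whereas the paper keeps $\phi_k$ on the right, pairs it with $f$ via H\"older with exponent $p_s^{*'}$, absorbs via Young, and then applies a second H\"older; both lead to the same level-set recursion with exponent $p_s^{*}/(pm')>1$. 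Your concern about tail contributions is unnecessary here: since each $u_n\in HW_0^{s,p}(\Om)\cap L^\infty(\Om)$ by Lemma~\ref{approx}, the functions $u_n^\beta$ and $(u_n-k)^+$ are admissible test functions in the global weak formulation, and the integrals over $\Om\times(\mathbb{H}^N\setminus\Om)$ are already absorbed into the $HW_0^{s,p}$ seminorm.
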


\begin{Theorem}\label{regthm3}(Constant singular exponent)
Let $\delta:\overline{\Omega}\to(1,\infty)$ be a constant function. Suppose $u$ is the weak solution of the problem \eqref{meqn} given by Theorem \ref{thm1}-$(c)$ and $f\in L^m(\Om)\setminus\{0\}$ is nonnegative for some $m$. Then 
\begin{enumerate}
    \item[(i)] if $m\in\big(1,\frac{Q}{sp}\big)$, then $u\in L^t(\Om)$, where $t=p_s^{*}\gamma$, with $\gamma=\frac{(\delta+p-1)m'}{pm'-p_s^{*}}$.
    \item[(ii)] if $m>\frac{Q}{sp}$, then $u\in L^\infty(\Om)$.
\end{enumerate}
\end{Theorem}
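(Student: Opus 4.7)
The plan is to run a Moser/Stampacchia-type iteration on the approximating sequence $\{u_n\}$ from the proof of Theorem~\ref{thm1}-$(c)$, for which we already know $u_n$ is uniformly positive on compact subsets of $\Omega$, belongs to $HW_0^{s,p}(\Omega)$, and the sequence $u_n^{(\delta+p-1)/p}$ is uniformly bounded in $HW_0^{s,p}(\Omega)$. In particular, by Remark~\ref{exrmk} and the fractional Sobolev embedding, we have the starting integrability $u_n \in L^{p_s^*(\delta+p-1)/p}(\Omega)$ uniformly in $n$, which will serve as the seed of the iteration. Throughout, the strategy is to use $u_n^{\alpha}$ (after a mild truncation to guarantee admissibility as a test function in $HW_0^{s,p}(\Omega)$, removed at the end by monotone convergence) as a test function in the weak formulation of the $n$-th approximating problem.

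For part (i), testing with $u_n^{\alpha}$ where $\alpha \geq 1$ is to be chosen, the left-hand side is handled via the standard algebraic inequality
\[
J_p(a-b)(a^{\alpha}-b^{\alpha}) \;\geq\; C(\alpha,p)\,\bigl|a^{(\alpha+p-1)/p}-b^{(\alpha+p-1)/p}\bigr|^p \qquad (a,b\geq 0),
\]
which together with the fractional Sobolev embedding $HW_0^{s,p}(\Omega)\hookrightarrow L^{p_s^*}(\Omega)$ yields
\[
\|u_n\|_{L^{p_s^*(\alpha+p-1)/p}(\Omega)}^{\alpha+p-1} \;\leq\; C \int_{\Omega} f\,u_n^{\alpha-\delta}\,dx.
\]
Applying Hölder's inequality with exponents $m$ and $m'$ on the right-hand side gives
\[
\|u_n\|_{L^{p_s^*(\alpha+p-1)/p}(\Omega)}^{\alpha+p-1} \;\leq\; C\|f\|_{L^m(\Omega)}\,\|u_n\|_{L^{(\alpha-\delta)m'}(\Omega)}^{\alpha-\delta}.
\]
I choose $\alpha$ by requiring the exponent on the right to equal a previously controlled one and iterate. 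The fixed point of the induced recursion $q \mapsto \frac{p_s^*}{p}(q/m'+\delta+p-1)$ is precisely
\[
q_{\infty} \;=\; \frac{(\delta+p-1)\,p_s^*\,m'}{pm'-p_s^*} \;=\; p_s^*\gamma,
\]
which matches the stated exponent $t$. Since $m>1$ and $p_s^*<pm'$ is exactly the condition $m>(p_s^*/p)'$, the iteration is well defined and, starting from $q_0 = p_s^*(\delta+p-1)/p$, converges monotonically to $q_\infty$; a standard finite-step bootstrap combined with a uniform bound on the intermediate constants yields $\|u_n\|_{L^t(\Omega)}\leq C$ independently of $n$, and Fatou's lemma transfers this to $u$.

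For part (ii), when $m>Q/(sp)$, the exponent $(\alpha-\delta)m'$ is strictly smaller than $p_s^*(\alpha+p-1)/p$ for all $\alpha$ large enough, so the above recursion can be iterated indefinitely with a uniform gain factor strictly greater than $1$. The standard Moser scheme — tracking the constants $C_k$ arising at each step, taking logarithms, and estimating the divergent geometric series in $\alpha_k$ — then gives
\[
\|u_n\|_{L^\infty(\Omega)} \;\leq\; C\bigl(\|f\|_{L^m(\Omega)},\,\|u_n\|_{L^{q_0}(\Omega)},\,s,p,\delta,Q,\Omega\bigr),
\]
uniformly in $n$. Alternatively, one can run a Stampacchia-type argument with test function $(u_n-k)^+$ and use the De Giorgi lemma on the super-level sets $\{u_n>k\}$; the condition $m>Q/(sp)$ is exactly what is needed to close the nonlinear recursion on $|\{u_n>k\}|$. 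Passing $n\to\infty$ then yields $u\in L^\infty(\Omega)$.

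The main obstacle will be justifying the use of $u_n^{\alpha}$ as a test function (it is not compactly supported inside $\Omega$, but only vanishes in $\mathbb{H}^N\setminus\Omega$ in a weak sense, and for large $\alpha$ one must control its $HW_0^{s,p}$ seminorm), and propagating the uniform $n$-independent bounds through the recursion in the presence of the singular term $f\,u_n^{-\delta}$. The former is handled by a truncation $T_k(u_n)^{\alpha}$ together with a monotone convergence argument as $k\to\infty$, while the latter relies on the fact that the singular exponent appears only as a shift $\alpha\mapsto\alpha-\delta$ on the right, so that once the initial integrability exceeds $\delta$ (guaranteed by $q_0\geq \delta$ for $p\geq 2$, and handled by an intermediate Hölder interpolation step otherwise), all the subsequent estimates remain meaningful. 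The convergence of the $L^t$ norms of $u_n$ to that of $u$ follows from the strong convergence $u_n \to u$ in $L^{p-1}(\Omega)$ established in the proof of Theorem~\ref{thm1}-$(c)$ combined with Fatou's lemma.
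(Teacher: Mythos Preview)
For part~(ii) your Stampacchia alternative with $(u_n-k)^+$ is exactly the paper's route, so that part is fine.

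For part~(i), however, your plan diverges from the paper in a way that introduces an unnecessary complication and, as written, a gap. The recursion $q\mapsto \frac{p_s^*}{p}\bigl(q/m'+\delta+p-1\bigr)$ that you set up has contraction factor $p_s^*/(pm')<1$ precisely when $m<Q/(sp)$ (not $m>(p_s^*/p)'$ as you wrote---the inequality goes the other way, and in fact $(p_s^*/p)'=Q/(sp)$), so starting from $q_0$ the iterates increase monotonically \emph{toward} the fixed point $t=p_s^*\gamma$ but never reach it. A ``finite-step bootstrap'' therefore cannot produce an $L^t$ bound; at best it gives $u\in L^q$ for every $q<t$. To land in $L^t$ via your iteration you would have to track the constants through infinitely many steps and then argue separately that a uniform bound on $\|u_n\|_{L^{q_k}}$ with $q_k\nearrow t$ upgrades to an $L^t$ bound---possible, but not what ``finite-step bootstrap'' means and not what you wrote.

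The paper sidesteps all of this by testing at the fixed point in a single step: take $\phi=u_n^{p\gamma-p+1}$ with $\gamma$ as in the statement. The identity $(p\gamma-p+1-\delta)m'=p_s^*\gamma$ makes the Lebesgue exponents on the two sides coincide, and since each $u_n\in L^\infty(\Omega)$ by Lemma~\ref{approx} the integrals are finite a priori. Using Lemma~\ref{BPalg} and the Sobolev embedding one obtains directly
\[
\Bigl(\int_\Omega u_n^{p_s^*\gamma}\,dx\Bigr)^{p/p_s^*}\;\leq\;C\,\|f\|_{L^m(\Omega)}\Bigl(\int_\Omega u_n^{p_s^*\gamma}\,dx\Bigr)^{1/m'},
\]
and since $p/p_s^*>1/m'$ (equivalently $m<Q/(sp)$) this closes to $\|u_n\|_{L^t(\Omega)}\leq C$ uniformly in $n$, with no iteration. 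The a~priori $L^\infty$ bound on the approximants is what lets you jump straight to the optimal exponent rather than creeping up to it.
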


\subsection{Uniqueness result}
\begin{Theorem}\label{thm4}(Uniquness)
Let $\delta:\overline{\Om}\to (0,\infty)$ be a constant function. Assume that $f\in L^1(\Om)\setminus\{0\}$ be nonnegative. Then the problem \eqref{meqn} admits at most one weak solution in $HW^{s,p}_{\mathrm{loc}}(\Omega)\cap L^{p-1}(\Omega)$.
\end{Theorem}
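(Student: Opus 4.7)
The proof follows the weak comparison approach of \cite{Caninoetal}, adapted to the Heisenberg setting. Let $u_1, u_2 \in HW^{s,p}_{\loc}(\Omega)\cap L^{p-1}(\Omega)$ be two weak solutions of \eqref{meqn} for the same nonnegative $f \in L^1(\Omega)\setminus\{0\}$ and constant $\delta > 0$. Both vanish outside $\Omega$, so it suffices to show $u_1 = u_2$ in $\Omega$. I would first record the interior positivity that underlies the existence theory: for each $\omega \Subset \Omega$ there is $c_\omega > 0$ with $u_i \geq c_\omega$ on $\omega$ for $i=1,2$, a consequence of the uniform pointwise lower bounds inherited by the approximating sequences together with the regularity result from \cite{Picci}. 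This makes each singular datum $f\,u_i^{-\delta}$ locally integrable on $\Omega$ and all the integrals appearing below well-defined.

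The key step is to test the difference of the two weak formulations against
$$
\phi_{k,n} \;=\; \min\{(u_1 - u_2)^+,\,k\}\,\eta_n,
$$
where $\eta_n$ is a smooth Heisenberg cutoff with $\supp \eta_n \Subset \Omega$ and $\eta_n \uparrow \chi_\Omega$, and $k > 0$ is a truncation level. By construction $\phi_{k,n}$ is bounded, nonnegative, compactly supported in $\Omega$, and admissible in both weak formulations. Subtracting, we obtain
$$
\iint_{\mathbb{H}^N\times\mathbb{H}^N}\bigl[J_p(u_1(x)-u_1(y))-J_p(u_2(x)-u_2(y))\bigr]\bigl(\phi_{k,n}(x)-\phi_{k,n}(y)\bigr)\,d\mu
$$
$$
=\int_\Omega f(x)\bigl(u_1^{-\delta}-u_2^{-\delta}\bigr)\phi_{k,n}\,dx.
$$
The right-hand side is nonpositive: $t\mapsto t^{-\delta}$ is strictly decreasing for any $\delta>0$, $\phi_{k,n}$ is supported where $u_1\geq u_2$, and $f\geq 0$. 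The left-hand side is nonnegative by the pointwise monotonicity inequality
$$
\bigl[J_p(a_1-b_1)-J_p(a_2-b_2)\bigr]\bigl[(a_1-a_2)^+-(b_1-b_2)^+\bigr]\geq 0,
$$
valid for all real $a_i, b_i$ and inherited by the truncation $\min\{\cdot,k\}$ after a routine sign analysis. Passing $k\to\infty$ by monotone convergence, then $n\to\infty$ using the local $HW^{s,p}$-regularity together with the hypothesis $u_1,u_2\in L^{p-1}(\Omega)$, forces both sides to vanish. Strict monotonicity of $J_p$ combined with the fact that $(u_1-u_2)^+$ vanishes outside $\Omega$ then yields $(u_1-u_2)^+=0$ a.e., and swapping the roles of $u_1$ and $u_2$ gives $u_1=u_2$.

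The principal obstacle is the limit $n\to\infty$, since for $\delta\geq 1$ the solutions are not known to belong to $HW_0^{s,p}(\Omega)$, so $(u_1-u_2)^+$ only has locally finite $HW^{s,p}$ seminorm and one must control the nonlocal tail contributions when the cutoff $\eta_n$ expands towards $\partial\Omega$. I would handle this by decomposing the double integral into a diagonal piece over $\supp\eta_n\times\supp\eta_n$, controlled by local $HW^{s,p}$ estimates, and a tail piece, controlled using $u_1,u_2\in L^{p-1}(\Omega)$ together with the uniform bound $\phi_{k,n}\leq k$ and the fact that the kernel $|y^{-1}\circ x|^{-(Q+sp)}$ is integrable at infinity against $|u_i|^{p-1}$. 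A secondary issue is the limit $k\to\infty$ in the singular integrand, which is settled by monotone convergence once one observes that $f(u_1^{-\delta}-u_2^{-\delta})(u_1-u_2)^+$ is integrable on $\Omega$: interior positivity of the $u_i$ handles this on any $\omega\Subset\Omega$, while near $\partial\Omega$ the factor $(u_1-u_2)^+$ tends to zero and provides the necessary cancellation against the blow-up of $u_i^{-\delta}$.
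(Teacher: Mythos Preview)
Your argument has a genuine gap at the step where you claim the left-hand side is nonnegative. The pointwise inequality
\[
\bigl[J_p(a_1-b_1)-J_p(a_2-b_2)\bigr]\bigl[(a_1-a_2)^+-(b_1-b_2)^+\bigr]\geq 0
\]
does survive the truncation $\min\{\cdot,k\}$, but it does \emph{not} survive multiplication by a spatial cutoff $\eta_n$. When $\eta_n(x)\neq\eta_n(y)$ (in particular on the cross set $\supp\eta_n\times(\supp\eta_n)^c$), the increment $\phi_{k,n}(x)-\phi_{k,n}(y)$ is no longer a monotone function of $(u_1-u_2)(x)-(u_1-u_2)(y)$, and the sign of the integrand is uncontrolled. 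So for fixed $k,n$ you cannot conclude that both sides vanish, and the rest of the argument collapses. Your tail discussion is aimed at controlling the \emph{size} of these cross terms, but what you need is control of their \emph{sign}, and that is not available. Passing $n\to\infty$ first to remove $\eta_n$ runs into the same wall: for $\delta\geq 1$ neither $u_i$ nor $(u_1-u_2)^+$ is known to lie in $HW_0^{s,p}(\Omega)$, so there is no limiting global test function to land on.

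The paper (following \cite{Caninoetal}) avoids this obstruction by an indirect route. Given a supersolution $v$, it introduces the minimizer $w$ of a truncated energy $J_k$ over the convex set $\{0\leq\phi\leq v\}\subset HW_0^{s,p}(\Omega)$, and shows (Lemma~\ref{lemmause}) that $w$ is itself a supersolution of the truncated equation. The point is that $w$ lives in the \emph{global} energy space $HW_0^{s,p}(\Omega)$ even when $v$ does not. Combined with the boundary condition $u\leq 0$ on $\partial\Omega$ in the sense of Definition~\ref{rediri}, this guarantees $(u-w-\varepsilon)^+\in HW_0^{s,p}(\Omega)$, so one can test against (truncations of) this function \emph{without any spatial cutoff}. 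The monotonicity structure is then preserved, yielding $u\leq w+\varepsilon\leq v+\varepsilon$, hence $u\leq v$ (Theorem~\ref{comparison}), and uniqueness follows by symmetry. The auxiliary $w$ is precisely the missing bridge in your approach.
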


\subsection{Sobolev type inequality with extremal}
\begin{Theorem}\label{thm5}
Let $\delta:\overline{\Om}\to (0,1)$ be a constant function. Suppose that $f \in L^m(\Omega) \setminus \{0\}$ is nonnegative, where $m=\Big(\frac{p_s^{*}}{1-\delta}\Big)'$,  and let $u_\delta \in HW_0^{s,p}(\Omega)$ be the weak solution of the problem \eqref{meqn} given by Theorem \ref{thm1}-$(a)$. Then we have
\begin{enumerate}
\item[(a)] \textbf{(Extremal)}  
\begin{multline*}
\Theta(\Omega) := \inf_{v \in HW_0^{s,p}(\Omega) \setminus \{0\}} \left\{ \int_{\mathbb{H}^N} \int_{\mathbb{H}^N} \frac{|v(x) - v(y)|^p}{|y^{-1} \circ x|^{Q + sp}} \, dx \, dy : \int_\Omega |v|^{1-\delta} f \, dx = 1 \right\} \\
= \left( \int_{\mathbb{H}^N} \int_{\mathbb{H}^N} \frac{|u_\delta(x) - u_\delta(y)|^p}{|y^{-1} \circ x|^{Q + sp}} \, dx \, dy \right)^{\frac{1-\delta-p}{1-\delta}}.
\end{multline*}

\item[(b)] \textbf{(Sobolev type inequality)} Moreover, for every $v \in HW_0^{s,p}(\Omega)$, the following mixed Sobolev inequality holds:
\begin{equation}\label{inequality2}
C \left( \int_\Omega |v|^{1-\delta} f \, dx \right)^{\frac{p}{1-\delta}} \leq \int_{\mathbb{R}^N} \int_{\mathbb{R}^N} \frac{|v(x) - v(y)|^p}{|y^{-1} \circ x|^{Q + sp}} \, dx \, dy,
\end{equation}
if and only if 
$$
C \leq \Theta(\Omega).
$$

\item[(c)] \textbf{(Simplicity)} If for some $w\in HW_0^{s,p}(\Om)$, the equality
\begin{equation}\label{sim}
\Theta(\Omega) \left( \int_\Omega |w|^{1-\delta}f \, dx \right)^{\frac{p}{1-\delta}} = \int_{\mathbb{H}^N} \int_{\mathbb{H}^N} \frac{|w(x) - w(y)|^p}{|y^{-1} \circ x|^{Q + sp}} \, dx \, dy,
\end{equation}
holds, then $w = k u_\delta$ for some constant $k$.
\end{enumerate}
\end{Theorem}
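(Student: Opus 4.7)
The plan is to apply the direct method of the calculus of variations to obtain a minimizer for $\Theta(\Om)$, identify it (up to a positive scalar) with $u_\delta$ via its Euler--Lagrange equation and the uniqueness Theorem~\ref{thm4}, and then read off the closed form in (a) by a scaling computation. Parts (b) and (c) then follow from (a) by standard $(1-\delta)$-homogeneity arguments.

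For the existence of a minimizer, I take a nonnegative minimizing sequence $\{v_n\}\subset HW_0^{s,p}(\Om)$ with $\int_\Om f v_n^{1-\delta}\,dx=1$ and $\|v_n\|^p\to\Theta(\Om)$ (replacing $v_n$ by $|v_n|$ preserves both the constraint and the objective). Boundedness in $HW_0^{s,p}(\Om)$ and the fractional Sobolev embeddings on the Heisenberg group produce, along a subsequence, a weak limit $v$, strong convergence in $L^q(\Om)$ for $q<p_s^*$, and pointwise a.e.\ convergence. With the choice $m=(p_s^*/(1-\delta))'$, H\"older's inequality together with the uniform bound of $|v_n|^{1-\delta}$ in $L^{p_s^*/(1-\delta)}(\Om)$ and a Vitali/equi-integrability argument let me pass to the limit in the constraint and conclude $\int_\Om f v^{1-\delta}\,dx=1$; weak lower semicontinuity of the Gagliardo seminorm then gives $\|v\|^p\leq\Theta(\Om)$, so $v$ is a nonnegative minimizer.

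To derive the Euler--Lagrange equation
\[
\int_{\mathbb{H}^N}\!\!\int_{\mathbb{H}^N} J_p\bigl(v(x)-v(y)\bigr)\bigl(\varphi(x)-\varphi(y)\bigr)\,d\mu \;=\; \Theta(\Om)\int_\Om f\,v^{-\delta}\varphi\,dx \qquad \forall\,\varphi\in HW_0^{s,p}(\Om),
\]
I apply the scale-invariant form $\|w\|^p\geq \Theta(\Om)\bigl(\int_\Om f|w|^{1-\delta}\,dx\bigr)^{p/(1-\delta)}$ of the definition of $\Theta(\Om)$ to $w = v+t\varphi$ and differentiate at $t=0$, using the regularization of $v^{1-\delta}$ by $(v+\e)^{1-\delta}$ in the spirit of \cite{Nmn} to justify the differentiation when $v$ may vanish. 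The pointwise positivity of $v$ on compact subsets of $\Om$, required to pair $fv^{-\delta}$ with test functions, comes from a nonlocal minimum principle applied to the $L_{s,p}$-superharmonic function $v$ (since $\Theta(\Om)fv^{-\delta}\geq 0$), combined with the regularity result of \cite{Picci} already used in the proof of Theorem~\ref{thm1}-(a). Setting $\widetilde v := \Theta(\Om)^{-1/(p-1+\delta)} v$ and using the $(p-1)$-homogeneity of $L_{s,p}$ turns the Euler--Lagrange equation into $L_{s,p}\widetilde v = f\widetilde v^{-\delta}$ in $\Om$, so Theorem~\ref{thm4} forces $\widetilde v = u_\delta$. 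Substituting $v = \Theta(\Om)^{1/(p-1+\delta)} u_\delta$ back into $\|v\|^p=\Theta(\Om)$, together with $\int_\Om f u_\delta^{1-\delta}\,dx = \|u_\delta\|^p$ (obtained by testing the equation of $u_\delta$ against itself), yields after using $1-\delta-p = -(p-1+\delta)$,
\[
\Theta(\Om) \;=\; \|u_\delta\|^{\,p(1-\delta-p)/(1-\delta)},
\]
which is the identity in (a).

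For (b), every $v\in HW_0^{s,p}(\Om)$ with $\int_\Om f|v|^{1-\delta}\,dx>0$ rescales via $v\mapsto v/(\int_\Om f|v|^{1-\delta}\,dx)^{1/(1-\delta)}$ into an admissible function for $\Theta(\Om)$, immediately producing \eqref{inequality2} with $C=\Theta(\Om)$; conversely, if \eqref{inequality2} holds for some $C$, testing against a minimizing sequence gives $C\leq\Theta(\Om)$. For (c), if equality holds in \eqref{sim} then the same normalization of $w$ is a minimizer of $\Theta(\Om)$ and must, by the uniqueness argument from (a), coincide with $u_\delta$, so $w$ is a constant multiple of $u_\delta$. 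The main obstacle in the whole argument is the rigorous derivation of the Euler--Lagrange equation when the minimizer $v$ is only a priori nonnegative: securing the pointwise lower bound needed to make the singular pairing $fv^{-\delta}\varphi$ meaningful for generic $\varphi\in HW_0^{s,p}(\Om)$ is the technical heart of the proof.
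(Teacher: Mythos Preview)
Your proposal is correct and reaches the same conclusions, but the route differs substantially from the paper's, especially in part~(a).

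The paper never runs the direct method on the constrained problem $\Theta(\Om)$ nor derives the Euler--Lagrange equation for an abstract minimizer. Instead it exploits the approximation $u_n$ already built in Lemma~\ref{approx}: it shows (Lemma~\ref{strong}) that $u_n\to u_\delta$ strongly in $HW_0^{s,p}(\Om)$, and then (Lemma~\ref{minprop}) that $u_\delta$ minimizes the \emph{free} functional
\[
I_\delta(v)=\tfrac{1}{p}\|v\|^p-\tfrac{1}{1-\delta}\int_\Om (v^+)^{1-\delta}f\,dx.
\]
The formula for $\Theta(\Om)$ then drops out of a one-line scaling trick: for $v\in S_\delta$, set $\lambda=\|v\|^{-p/(p-1+\delta)}$ and expand $I_\delta(u_\delta)\le I_\delta(\lambda|v|)$. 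Part~(c) is handled by the strict convexity of the norm together with the concavity of $t\mapsto t^{1-\delta}$ (midpoint comparison of two minimizers), not via the Euler--Lagrange equation and Theorem~\ref{thm4}.

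Your approach is more direct and does not rely on the approximating sequence, but the price is exactly the ``technical heart'' you flag: justifying the Euler--Lagrange equation for a minimizer $v$ that is only a priori nonnegative. This can be made to work --- the one-sided variation $t\mapsto v+t\varphi$, $\varphi\ge 0$, $t\downarrow 0$, forces $\int_\Om fv^{-\delta}\varphi<\infty$ and yields $L_{s,p}v\ge 0$, after which the strong minimum principle from \cite{Picci} gives local positivity and the full equation follows --- but the paper's route sidesteps this entirely by showing from the outset that $u_\delta$ itself is the minimizer of $I_\delta$. In short: the paper trades the delicate EL derivation for the approximation machinery it has already set up, while your argument is self-contained once the positivity step is made rigorous.
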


\begin{Corollary}\label{ansiowgtrmk}
From Theorem \ref{thm5}, we have
\begin{equation*}
\Theta(\Omega) = \int_{\mathbb{H}^N} \int_{\mathbb{H}^N} \frac{|V_\delta(x) - V_\delta(y)|^p}{|y^{-1} \circ x|^{Q + sp}} \, dx \, dy.
\end{equation*}
Moreover, $V_\delta \in S_\delta$ and satisfies the following singular problem
\begin{equation*}
\mathcal{L}_{s,p} V_\delta = \Theta(\Omega) f V_\delta^{-\delta} \quad \text{in } \Omega, \quad V_\delta > 0 \quad \text{in } \Omega.
\end{equation*}
\end{Corollary}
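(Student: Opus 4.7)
The plan is to obtain $V_\delta$ by a single rescaling of the solution $u_\delta$ produced in Theorem \ref{thm1}-(a), so that every claim in the corollary reduces to the $(p-1)$-homogeneity of $L_{s,p}$ together with part (a) of Theorem \ref{thm5}. Concretely, I would set
\[
V_\delta := \Theta(\Omega)^{1/(p-1+\delta)}\,u_\delta.
\]
The $(p-1)$-homogeneity of $L_{s,p}$ then yields
\[
L_{s,p} V_\delta = \Theta(\Omega)^{(p-1)/(p-1+\delta)} L_{s,p} u_\delta = \Theta(\Omega)^{(p-1)/(p-1+\delta)} f\,u_\delta^{-\delta} = \Theta(\Omega)\,f\,V_\delta^{-\delta},
\]
which is precisely the singular equation displayed in the corollary.

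Next, by Theorem \ref{thm5}(a), and noting $1-\delta-p = -(p-1+\delta)$, the exponent in part (a) collapses to give the equivalent form $\Theta(\Omega)^{1/(p-1+\delta)} = \|u_\delta\|^{-p/(1-\delta)}$. A direct scaling computation then produces
\[
\|V_\delta\|^p = \Theta(\Omega)^{p/(p-1+\delta)}\|u_\delta\|^p = \|u_\delta\|^{p(1-\delta-p)/(1-\delta)} = \Theta(\Omega),
\]
which is the first identity asserted by the corollary. For membership in $S_\delta := \{v \in HW_0^{s,p}(\Omega)\setminus\{0\} : \int_\Omega |v|^{1-\delta}f\,dx = 1\}$ (the constraint set of Theorem \ref{thm5}(a)), I would test the equation for $u_\delta$ against $u_\delta$ itself to obtain the energy identity $\|u_\delta\|^p = \int_\Omega f u_\delta^{1-\delta}\,dx$; rescaling yields
\[
\int_\Omega |V_\delta|^{1-\delta} f\,dx = \|u_\delta\|^{-p}\int_\Omega f u_\delta^{1-\delta}\,dx = 1,
\]
which places $V_\delta$ in $S_\delta$.

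The only genuinely delicate step is the energy identity, since the singular term $f u_\delta^{-\delta}$ prevents a direct substitution of $u_\delta$ in the weak formulation of \eqref{meqn}. The standard remedy, already built into the construction of $u_\delta$ in Theorem \ref{thm1}-(a), is to test the regularized equation against the corresponding approximate solution $u_n$ and pass to the limit: monotone convergence on the right-hand side (the sequence $f u_n^{1-\delta}$ is monotone and dominated by $f u_\delta^{1-\delta} \in L^1(\Omega)$, using $m=(p_s^*/(1-\delta))'$ together with the fractional Sobolev embedding on $\mathbb{H}^N$) and the lower semicontinuity of the Gagliardo seminorm on the left deliver the desired identity. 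Once this is in place, the rest of the corollary is simply the algebraic scaling computation above.
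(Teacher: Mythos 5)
Your scaling argument is correct and is essentially the route the paper intends: the corollary is read off from the proof of Theorem \ref{thm5}(a), where the paper sets $V_\delta=\tau_\delta u_\delta$ with $\tau_\delta=\big(\int_\Omega u_\delta^{1-\delta}f\,dx\big)^{-1/(1-\delta)}$; once the energy identity $\|u_\delta\|^p=\int_\Omega u_\delta^{1-\delta}f\,dx$ is available, this coincides with your normalization $\Theta(\Omega)^{1/(p-1+\delta)}u_\delta$, and then $V_\delta\in S_\delta$, $\|V_\delta\|^p=\Theta(\Omega)$ and the equation $\mathcal{L}_{s,p}V_\delta=\Theta(\Omega)fV_\delta^{-\delta}$ all follow from part (a) plus the $(p-1)$-homogeneity of $L_{s,p}$, exactly as in your computation.

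The one step that does not work as written is your derivation of the energy identity. Testing the regularized problem \eqref{approxeqn} with $u_n$ gives $\|u_n\|^p=\int_\Omega f_n\big(u_n+\tfrac1n\big)^{-\delta}u_n\,dx$, and dominated convergence handles the right-hand side; but weak lower semicontinuity of the seminorm only yields $\|u_\delta\|^p\le\liminf_n\|u_n\|^p=\int_\Omega f u_\delta^{1-\delta}\,dx$, i.e.\ one inequality, not the identity. The reverse inequality requires an extra ingredient, which the paper supplies: Lemma \ref{testfn} shows that \eqref{wksoleqn} holds for every $\phi\in HW_0^{s,p}(\Omega)$ (by density and a Fatou-type estimate), so one may test directly with $\phi=u_\delta$; alternatively, Lemma \ref{strong} proves $u_n\to u_\delta$ strongly in $HW_0^{s,p}(\Omega)$ — obtained not from lower semicontinuity but from the variational inequality \eqref{prop1} of Lemma \ref{lemma1}, which gives $\|u_n\|\le\|u_\delta\|$ — and then $\lim_n\|u_n\|^p=\|u_\delta\|^p$ closes the identity. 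A third repair within your scheme is to test \eqref{approxeqn} with $\phi=u_\delta$, pair the weakly $L^{p'}$-convergent kernels $J_p(u_n(x)-u_n(y))\,|y^{-1}\circ x|^{-(Q+sp)/p'}$ against the fixed $L^p$ function $(u_\delta(x)-u_\delta(y))\,|y^{-1}\circ x|^{-(Q+sp)/p}$, and use Fatou on the right to get $\|u_\delta\|^p\ge\int_\Omega f u_\delta^{1-\delta}\,dx$. With any of these in place your proof is complete; note also the minor point that $f_n\big(u_n+\tfrac1n\big)^{-\delta}u_n$ need not be monotone in $n$, but domination by $f u_\delta^{1-\delta}\in L^1(\Omega)$ suffices.
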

\textbf{Organization of the paper:} The remainder of this article is structured as follows:
In Section 2, we introduce the functional framework and recall some auxiliary results that will be used throughout the paper.
In Section 3, we establish some preliminary results required to prove our main results.
Finally, in Section 4, we complete the proof of our main results.

\section{Functional setting and auxiliary results}
\subsection*{The Heisenberg group}
We begin by recalling some definitions and preliminary results concerning the Heisenberg group. For a more detailed exposition, we refer the reader to the monograph~\cite{BLU07}.\\
The Heisenberg group $\mathbb{H}^N$ is a Lie group whose underlying manifold is ~$\mathbb{R}^N_x\times \mathbb{R}^N_y \times \mathbb{R}_t$. The group law~$\circ$ and dilations~$ \delta_\lambda$ are given by:
     $$
		\xi \circ \xi' := \big(x+x',\, y+y',\, t+t'+2\langle y,x'\rangle-2\langle x,y'\rangle \big), \qquad \forall \xi=(x,y,t), \xi'= (x',y',t') \in \mathbb{H}^N
	$$
     and 
     $$
         \delta_\lambda(\xi):=\big(\lambda x, \, \lambda y, \,\lambda^2 t\big),
 	 $$
	 respectively.
     It is straightforward to verify that the identity element of $\mathbb{H}^N$
 is the origin $0$, and the inverse of an element $\xi$ is given by $\xi^{-1} = - \xi$. The Jacobian determinant of the dilation $\delta_\lambda$ is equal to $\lambda^{2N+2}$, and the quantity $Q:=2N+2$ is referred to as the homogeneous dimension of $\mathbb{H}^N$.

 The canonical (or Jacobian) basis of the Heisenberg Lie algebra $\mathfrak{h}^N$
 associated with $\mathbb{H}^N$ is given by the following vector fields:
	
	The Jacobian basis of the Heisenberg Lie algebra~$\mathfrak{h}^N$ of~$\mathbb{H}^N$ is given by the following vector fields
	$$
	X_j := \partial_{x_j} +2y_j \partial_t, \quad
	X_{N+j}:= \partial_{y_j}-2x_j \partial_t, \quad 1 \leq j \leq N, \quad
	T := \partial_t.
	$$
    Moreover, it can be easily verified that
	$$
	[X_j,X_{N+j}]:= X_j X_{N+j}-X_{N+j}X_j = -4\partial_t, \quad \text{for every}~1 \leq j \leq N,
	$$
	so that
	$$
    \textup{rank}\Big(\textup{Lie}\{X_1,\dots,X_{2N}\}\Big)\ = \ 2N+1.
	$$
	The group $\mathbb{H}^N$ is thus a {\it Carnot group} with the following stratification~$\mathfrak{h}^N$ of the Lie algebra,
	$$
	\mathfrak{h}^n = \textup{span}\{X_1,\dots,X_{2N}\}\oplus \textup{span}\{T\}.
	$$
    Given a domain~$\Omega \subset \mathbb{H}^N$, for any~$u\in C^1(\Omega;\,\mathbb{R})$ we define the {\it horizontal gradient}~$\nabla_{\mathbb{H}^N} u$  of~$u$ by
	$$
	\nabla_{\mathbb{H}^N} u (\xi):= \Big(X_1u(\xi),\dots, X_{2N}u(\xi)\Big).
    $$    
	\begin{Definition} 
    A homogeneous norm on $\mathbb{H}^N$ is a continuous function (with respect to the Euclidean topology) ${d_{\rm o}} : \mathbb{H}^N \rightarrow [0,+\infty)$ satisfying the following properties:
		\begin{enumerate}
			\item[(i)]
				${d_{\rm o}}(\delta_\lambda(\xi))=\lambda {d_{\rm o}}(\xi)$, for every~$\lambda>0$ and every~$\xi \in \mathbb{H}^N$;
			\item[(ii)]
				${d_{\rm o}}(\xi)=0$ if and only if~$\xi=0$.
		\end{enumerate}
		Moreover, we say that the homogeneous norm~${d_{\rm o}}$ is {\rm symmetric} if~$
		{d_{\rm o}}(\xi^{-1})={d_{\rm o}}(\xi)$, for any~$\xi \in \mathbb{H}^N$.
	\end{Definition}
    Given a fixed homogeneous norm $d_{\rm o}$ on~$\mathbb{H}^N$, define the function $\Psi$ on the set of all pairs of elements in 
$\mathbb{H}^N$ by:
		$$
		\Psi(\xi,\eta):={d_{\rm o}} (\eta^{-1}\circ \xi),
		$$
		is a pseudometric on $\mathbb{H}^N$.
		
		\vspace{2mm}
        Homogeneous norms are generally not proper norms on $\mathbb{H}^N$. Nevertheless, for any homogeneous norm $d_{\rm o}$ on~$\mathbb{H}^N$, there exists a constant~$\Lambda>0$ such that:
		$$
	   \Lambda^{-1}	{|\xi|}_{\mathbb{H}^N} \le d_{\rm o}(\xi) \le \Lambda {|\xi|}_{\mathbb{H}^N}, \quad \forall \xi \in \mathbb{H}^N,~{|\xi|}_{\mathbb{H}^N}:= \left((|{x}|^2 +|{y}|^2)^2+t^2\right)^\frac{1}{4}.
		$$
        The function~$|{\cdot}|_{\mathbb{H}^N}$ defined above is known as {\it the Kor\'anyi distance} on the Heisenberg group, and it indeed defined a norm. For the proof, we refer to~\cite{Cyg81} and Example~5.1 in~\cite{BFS17}.
        
       	For any fixed~$\xi_0 \in \mathbb{H}^N$ and~$R>0$, we denote by~$B_R(\xi_0)$ the ball with center~$\xi_0$ and radius~$R$, defined as follows:
     	$$
     	B_R(\xi_0):=\Big\{\xi \in \mathbb{H}^N : |\xi_0^{-1}\circ \xi|_{\mathbb{H}^N} < R\Big\}.
	    $$
	\subsection*{The fractional Sobolev spaces}
    We now recall several definitions and fundamental results related to the fractional functional framework used in this work. For more comprehensive details, the reader is referred to \cite{AM,KS18}; see also \cite{DPV12} for an introduction to fractional Sobolev spaces in the Euclidean setting.
    
	Let~$p \in (1,\infty)$,~$s \in (0,1)$, and let~$u : \mathbb{H}^N \rightarrow \mathbb{R}$ be a measurable function. 
	The fractional Sobolev spaces~$HW^{s,p}$ on the Heisenberg group is defined by
    $$
		HW^{s,p}(\mathbb{H}^N):=\Big\{u \in L^p(\mathbb{H}^N):  \frac{|{u(x)-u(y)}|}{|{y^{-1}\circ x|}^{s+\frac{Q}{p}}} \in L^p(\mathbb{H}^N \times \mathbb{H}^N)\Big\},
    $$
	endowed with the natural fractional norm
    $$
			\|u\|_{HW^{s,p}(\mathbb{H}^N)}:= \Big(\int_{\mathbb{H}^N}|{u}|^p \, {\rm d}\xi +\underbrace{\int_{\mathbb{H}^N}\int_{\mathbb{H}^N}\frac{|{u(x)-u(y)}|^p}{|{y^{-1}\circ x|}^{Q+sp}}\, {\rm d}\xi {\rm d}\eta}_{=:[u]_{HW^{s,p}(\mathbb{H}^N)}}\Big)^\frac{1}{p}, \qquad u \in HW^{s,p}(\mathbb{H}^N).
    $$
     In a similar fashion, given a bounded Lipschitz domain~$\Omega \subset \mathbb{H}^N$, one can define the  fractional Sobolev space~$HW^{s,p}(\Omega)$. Further, as in \cite{GKR}, we define the space $HW^{s,p}_0(\Omega)$ as  
     $$
     HW^{s,p}_0(\Omega)=\{u\in HW^{s,p}(\mathbb{H}^N):u=0\text{ on }\mathbb{H}^N\setminus\Om\}.
     $$
     By \cite[Lemma 1]{GKR}, the space $HW^{s,p}_0(\Omega)$ is a reflexive Banach space. Further, from \cite[Theorem 1.1]{AM} (see also \cite[Theorem 2.5]{KS18}), we have
     $$
     \|u\|_{L^p(\Om)}\leq C[u]_{HW^{s,p}(\mathbb{H}^N)}
     $$
     for every $u\in C_c^{\infty}(\Om)$ with some constant $C>0$. Hence, we define an equivalent norm $\|\cdot\|$ on the space $HW_0^{s,p}(\Om)$ by
     \begin{equation}\label{norm}
     \|u\|:=[u]_{HW_0^{s,p}(\Om)}=\left(\int_{\mathbb{H}^N}\int_{\mathbb{H}^N}\frac{|{u(x)-u(y)}|^p}{|{y^{-1}\circ x|}^{Q+sp}}\,dx dy\right)^\frac{1}{p}.
     \end{equation}
     We say that~$u \in HW^{s,p}_{\mathrm{loc}}(\Om)$ if $u\in HW^{s,p}(\omega)$ for every $\omega\Subset\Om$.

To provide a rigorous framework for our analysis, we provide below the notion of weak solutions. For this, first we define the the associated zero boundary condition as follows:

\begin{Definition}[Dirichlet Boundary Condition]\label{rediri}
Let $u$ be such that $u = 0$ in $\mathbb{H}^N \setminus \Omega$. We say that $u \leq 0$ on $\partial\Omega$ if, for every $\theta > 0$, it holds that $(u-\theta)^+\in HW_0^{s,p}(\Om)$. Further, we say that $u = 0$ on $\partial\Omega$ if $u$ is nonnegative and satisfies $u \leq 0$ on $\partial\Omega$.
\end{Definition}

We now define weak solutions to the problem \eqref{meqn}.

\begin{Definition}[Weak Solution]\label{wksoldef}
Let $f \in L^1(\Omega)\setminus{0}$ be nonnegative and let $\Omega\subset \mathbb{H}^N$ be a bounded Lipschitz domain. A function $u \in HW^{s,p}_{\mathrm{loc}}(\Omega) \cap L^{p-1}(\Omega)$ is called a weak solution to problem \eqref{meqn} if $u=0$ in $\mathbb{H}^N\setminus\Om$ such that $u=0$ on $\partial\Om$ as in Definition \ref{rediri} and the following conditions hold:
\begin{itemize}
\item For every subset $\omega \Subset \Omega$, there exists a constant $C = C(\omega) > 0$ such that $u \geq C$ in $\omega$;
\item For every $\phi \in C_c^1(\Omega)$, one has
\begin{equation}\label{wksoleqn}
\int_{\mathbb{H}^N}\int_{\mathbb{H}^N}\frac{|u(x)-u(y)|^{p-2}(u(x)-u(y))(\phi(x)-\phi(y))}{|y^{-1}\circ x|^{Q+sp}}\,dx dy = \int_{\Omega} f(x) u^{-\delta(x)} \phi(x)\,dx.
\end{equation}
\end{itemize}
\end{Definition}

\subsection*{Auxiliary results}
In this subsection, we present several auxiliary results. The first, taken from \cite[Theorem 9.14]{var}, plays a key role in establishing the existence of approximate solutions.
\begin{Theorem}\label{MB}
Let $V$ be a real separable reflexive Banach space and $V^*$ be the dual of $V$. Suppose that $T:V\to V^{*}$ is a coercive and demicontinuous monotone operator. Then $T$ is surjective, i.e., given any $f\in V^{*}$, there exists $u\in V$ such that $T(u)=f$. If $T$ is strictly monotone, then $T$ is also injective.  
\end{Theorem}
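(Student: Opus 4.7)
The plan is to prove Theorem \ref{MB} via the classical Galerkin approximation combined with Minty's monotonicity trick. Fix $f \in V^*$. Since $V$ is separable, pick a countable linearly independent set $\{e_k\}_{k \geq 1}$ whose span is dense in $V$, and set $V_n := \mathrm{span}\{e_1, \ldots, e_n\}$, so that $\bigcup_n V_n$ is dense in $V$.

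First I would solve the finite-dimensional Galerkin problem: find $u_n \in V_n$ such that $\langle T(u_n) - f, v\rangle = 0$ for every $v \in V_n$. Identifying $V_n$ with $\mathbb{R}^n$ via the chosen basis, this is the problem of finding a zero in $\mathbb{R}^n$ of the continuous map $w \mapsto (\langle T(w)-f, e_j\rangle)_{j=1}^n$; continuity on the finite-dimensional subspace follows from demicontinuity of $T$ because the weak and strong topologies on $V^*$ agree when tested against the finite family $\{e_j\}$. Coercivity ensures that for $R$ large enough $\langle T(w) - f, w\rangle > 0$ whenever $w \in V_n$ with $\|w\| = R$, and the standard corollary of Brouwer's fixed point theorem for acute maps then yields the desired $u_n$ in the closed ball of radius $R$ (independent of $n$).

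Next, since $\|u_n\| \leq R$ uniformly and $V$ is reflexive, I would extract a subsequence (not relabeled) with $u_n \rightharpoonup u$ in $V$. To identify $T(u) = f$ I would apply Minty's trick. For every $z \in V$ monotonicity gives
\begin{equation*}
\langle T(u_n), u_n\rangle - \langle T(u_n), z\rangle - \langle T(z), u_n - z\rangle \;\geq\; 0.
\end{equation*}
Testing the Galerkin identity with $v = u_n$ yields $\langle T(u_n), u_n\rangle = \langle f, u_n\rangle \to \langle f, u\rangle$, while for any fixed $z \in V_{n_0}$ and all $n \geq n_0$ it gives $\langle T(u_n), z\rangle = \langle f, z\rangle$. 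Passing to the limit in $n$, then using the density of $\bigcup_n V_n$ and demicontinuity of $T$ (so that $T(z_k) \rightharpoonup T(z)$ in $V^*$ along an approximating sequence), I obtain
\begin{equation*}
\langle f - T(z), u - z\rangle \;\geq\; 0 \qquad \text{for every } z \in V.
\end{equation*}
Substituting $z = u - \lambda v$ with $\lambda > 0$ and any $v \in V$, dividing by $\lambda$, and letting $\lambda \to 0^+$ (once again invoking demicontinuity of $T$) produces $\langle f - T(u), v\rangle \geq 0$ for every $v \in V$; replacing $v$ by $-v$ forces $T(u) = f$. Injectivity under strict monotonicity is immediate: if $T(u_1) = T(u_2)$ with $u_1 \neq u_2$, then $0 = \langle T(u_1) - T(u_2), u_1 - u_2\rangle > 0$, a contradiction.

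The main obstacle I expect is precisely the Minty step: because $T$ is only demicontinuous (not norm-continuous), one cannot simply pass to the limit in $\langle T(u_n), \cdot\rangle$ along the weakly convergent sequence $u_n \rightharpoonup u$. The workaround is to avoid ever taking strong limits of $T(u_n)$ and instead exploit monotonicity to trade convergence of $T(u_n)$ for convergence of the scalar quantity $\langle T(u_n), u_n\rangle$, which is pinned down by the Galerkin identity; combining this with the density of $\bigcup V_n$ and the $\lambda \to 0^+$ limit is the delicate part of the argument.
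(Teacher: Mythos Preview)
Your proof is correct and follows the standard Galerkin--Minty route for the Browder--Minty surjectivity theorem. However, the paper does not actually prove this statement: it is quoted as an auxiliary result from \cite[Theorem~9.14]{var} (Ciarlet's textbook) and used as a black box in the proof of Lemma~\ref{approx}. So there is no ``paper's own proof'' to compare against; what you have written is essentially the classical argument one finds in that reference.
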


The following result holds along with the lines of the proof of \cite[Proposition 1.5]{Caninoetal}.
\begin{Lemma}\label{mainappos}
Let $\gamma>0$ and let $u$ be nonnegative with $u^{\max\big\{\frac{\gamma+p-1}{p},1\big\}}\in HW^{s,p}_0(\Omega).$
Then $u$ fulfills zero  Dirichlet boundary conditions in the sense of Definition \ref{rediri}.
\end{Lemma}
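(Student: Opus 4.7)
Let $\alpha := \max\{(\gamma+p-1)/p,1\}\geq 1$, so by hypothesis $v:=u^{\alpha}\in HW_0^{s,p}(\Omega)$. The plan is to realize $(u-\theta)^+$, for an arbitrary fixed $\theta>0$, as a Lipschitz postcomposition of $v$ that sends $0$ to $0$, and then to invoke the standard fact that such postcompositions preserve $HW_0^{s,p}(\Omega)$ on the Heisenberg group.

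Concretely, I would introduce
\[
\Phi_\theta(\tau):=\bigl(\tau^{1/\alpha}-\theta\bigr)^+,\qquad \tau\geq 0,
\]
and note the pointwise identity $\Phi_\theta(v(x))=(u(x)-\theta)^+$, valid since $u\geq 0$. Differentiating, $\Phi_\theta$ vanishes on $[0,\theta^\alpha]$ and has derivative $\tfrac{1}{\alpha}\tau^{1/\alpha-1}$ on $(\theta^\alpha,\infty)$; since $\alpha\geq 1$ the exponent $1/\alpha-1$ is nonpositive, so the maximum of this derivative is attained at the left endpoint and one obtains the global Lipschitz bound
\[
|\Phi_\theta(\tau_1)-\Phi_\theta(\tau_2)|\leq \tfrac{1}{\alpha}\theta^{1-\alpha}\,|\tau_1-\tau_2|,\qquad \tau_1,\tau_2\in[0,\infty),
\]
together with $\Phi_\theta(0)=0$.

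With these two properties, the conclusion follows from a direct pointwise estimate: $\Phi_\theta(v)$ vanishes a.e.\ on $\mathbb{H}^N\setminus\Omega$ because $v$ does and $\Phi_\theta(0)=0$, and
\[
\frac{|\Phi_\theta(v(x))-\Phi_\theta(v(y))|^p}{|y^{-1}\circ x|^{Q+sp}}\;\leq\; \frac{1}{\alpha^p}\theta^{p(1-\alpha)}\,\frac{|v(x)-v(y)|^p}{|y^{-1}\circ x|^{Q+sp}},
\]
with an analogous pointwise bound for the $L^p$ part, so that integration against $dx\,dy$ yields $\Phi_\theta(v)=(u-\theta)^+\in HW^{s,p}(\mathbb{H}^N)$. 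Combined with the vanishing outside $\Omega$, this gives $(u-\theta)^+\in HW_0^{s,p}(\Omega)$; since $\theta>0$ was arbitrary, Definition \ref{rediri} is verified.

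There is no serious analytic obstacle here; the only delicate design point is the appearance of $\max\{\cdot,1\}$ in the hypothesis. Indeed, if $\gamma<1$ one would naively try the exponent $(\gamma+p-1)/p<1$, for which $\Phi_\theta$ would acquire an unbounded derivative $\tau^{1/\alpha-1}\to\infty$ as $\tau\downarrow\theta^\alpha$ and the Lipschitz step would fail. Enforcing $\alpha\geq 1$ is exactly what keeps $\Phi_\theta$ globally Lipschitz on $[0,\infty)$ and makes the postcomposition argument go through.
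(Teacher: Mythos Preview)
Your proof is correct and follows exactly the Lipschitz postcomposition argument that the paper defers to in \cite[Proposition 1.5]{Caninoetal}: write $(u-\theta)^+=\Phi_\theta(u^\alpha)$ with $\Phi_\theta$ globally Lipschitz and $\Phi_\theta(0)=0$, and use the pointwise bound on the Gagliardo quotient. One minor slip in your closing commentary (which does not affect the proof): if $\alpha<1$ the derivative $\tau^{1/\alpha-1}$ blows up as $\tau\to\infty$, not as $\tau\downarrow\theta^\alpha$.
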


For the following embedding result, we refer to \cite[Theorem 1]{emb}.
\begin{Lemma}\label{emb}
Let $0<s<1$ and $1<p<\infty$ such that $sp<Q$. Then the embedding
$$
HW^{s,p}(\Om)\hookrightarrow L^r(\Om)
$$
is continuous for every $r\in[1,p_s^*]$ and compact for every $r\in[1,p_s^*)$.
\end{Lemma}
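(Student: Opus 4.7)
The plan is to argue in two stages, first establishing a global fractional Sobolev inequality on $\mathbb{H}^N$ and then transferring continuity and compactness to the bounded domain $\Om$.

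For the continuous embedding, I would first prove
\begin{equation*}
\|u\|_{L^{p_s^*}(\mathbb{H}^N)}\le C\,[u]_{HW^{s,p}(\mathbb{H}^N)},\qquad u\in C_c^{\infty}(\mathbb{H}^N),
\end{equation*}
via the standard potential-theoretic route: represent $u$ as a Riesz-type potential of its fractional Laplacian on $\mathbb{H}^N$ and apply the Hardy--Littlewood--Sobolev inequality on stratified groups, where the Euclidean kernel $|x-y|$ is replaced by the homogeneous quasi-norm $|y^{-1}\circ x|$ and $N$ by $Q=2N+2$. Combined with a bounded extension operator from $HW^{s,p}(\Om)$ to $HW^{s,p}(\mathbb{H}^N)$, valid for bounded Lipschitz $\Om\subset\mathbb{H}^N$, this yields the continuous embedding $HW^{s,p}(\Om)\hookrightarrow L^{p_s^*}(\Om)$. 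Continuity for every $r\in[1,p_s^*]$ then follows by H\"older's inequality, using $|\Om|<\infty$.

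For compactness when $r<p_s^*$, the natural strategy is a Heisenberg version of the Rellich--Kondrachov theorem based on the Kolmogorov--Riesz criterion. Given a bounded sequence $\{u_n\}\subset HW^{s,p}(\Om)$, extend to $\{U_n\}\subset HW^{s,p}(\mathbb{H}^N)$ with uniform norm bound. I would establish the translation estimate
\begin{equation*}
\|U_n(h\circ\cdot)-U_n\|_{L^p(\mathbb{H}^N)}\le C\,|h|_{\mathbb{H}^N}^{\,s}\,[U_n]_{HW^{s,p}(\mathbb{H}^N)},
\end{equation*}
by writing the difference in terms of an integral against the Gagliardo kernel and exploiting its left-invariance under the group law $\circ$. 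Together with the trivial tightness (the $U_n$'s vanish outside a fixed ball containing $\Om$), Kolmogorov--Riesz yields $L^p(\Om)$-precompactness. Interpolating $L^p$ convergence against the uniform $L^{p_s^*}$ bound from part one then produces convergence in $L^r(\Om)$ for every $r\in[1,p_s^*)$.

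The main obstacle is the translation argument: non-commutativity of $\circ$ requires choosing the correct (left) translation to match the symmetry of the kernel $|y^{-1}\circ x|^{-Q-sp}$, so that the change of variables preserves the Gagliardo seminorm. A secondary technical point is the construction of the extension operator on Lipschitz domains of $\mathbb{H}^N$, for which one invokes the machinery developed for fractional Sobolev spaces on stratified groups underlying the cited reference \cite{emb}; once these two ingredients are in place, the remainder proceeds along classical Euclidean lines.
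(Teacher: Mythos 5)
The paper itself does not prove this lemma at all --- it simply quotes \cite[Theorem 1]{emb} --- so you are attempting more than the text does; unfortunately the compactness half of your sketch rests on a step that is false as stated. Your translation estimate $\|U_n(h\circ\cdot)-U_n\|_{L^p(\mathbb{H}^N)}\le C\,|h|_{\mathbb{H}^N}^{\,s}[U_n]_{HW^{s,p}(\mathbb{H}^N)}$ cannot hold with a constant depending only on the seminorm. Left translation does leave the Gagliardo seminorm invariant (as you say), but it does \emph{not} move points by a distance comparable to $|h|$: with respect to the left-invariant quasi-distance one has $d(h\circ x,x)=|x^{-1}\circ h\circ x|_{\mathbb{H}^N}$, and a direct computation with the group law gives $x^{-1}\circ h\circ x=(\alpha,\beta,\gamma+4(\langle\beta,a\rangle-\langle\alpha,b\rangle))$ for $x=(a,b,c)$, $h=(\alpha,\beta,\gamma)$, so this quantity grows like $(|x|\,|h|)^{1/2}$. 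Consequently, taking a fixed bump $u$ and its left translates $u_R=u(P_R^{-1}\circ\cdot)$ with $|P_R|=R$, one has $[u_R]=[u]$ while $\|u_R(h\circ\cdot)-u_R\|_{L^p}=\|u(h_R\circ\cdot)-u\|_{L^p}$ with $h_R=P_R^{-1}\circ h\circ P_R$, $|h_R|\sim(R|h|)^{1/2}$; choosing $R\sim|h|^{-1}$ this stays bounded away from $0$ as $h\to0$, contradicting your inequality. Right translations displace every point by exactly $|h|$, but then the substitution turns the kernel argument into the conjugate $h^{-1}\circ y^{-1}\circ x\circ h$, so the seminorm is no longer preserved --- this is precisely the non-commutativity obstruction, and it cannot be waved away by ``choosing the correct translation''. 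The step can be repaired, but only by using more than the seminorm: since your $U_n$ vanish outside a fixed ball $B_R$, the displacement satisfies $d(h\circ x,x)\le C_R|h|^{1/2}$ there, which yields a modulus of order $|h|^{s/2}$ with a constant depending on $R$ (any modulus suffices for Kolmogorov--Riesz); alternatively, and more cleanly, drop translations altogether and use the metric-measure version of the Riesz--Kolmogorov criterion, i.e. uniform smallness of $\|u_n-(u_n)_{B_r(\cdot)}\|_{L^p(\Om)}$ as $r\to0$, which follows directly from the uniform Gagliardo bound by Jensen/H\"older on balls of the doubling Kor\'anyi metric.

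Two further points need attention. First, the bounded extension operator $HW^{s,p}(\Om)\to HW^{s,p}(\mathbb{H}^N)$ for ``bounded Lipschitz'' domains is not a black box in the Heisenberg setting: Euclidean Lipschitz regularity does not automatically make $\Om$ an extension domain for the Kor\'anyi/Carnot--Carath\'eodory geometry (characteristic points are the issue; the natural sufficient condition is a uniform or $(\epsilon,\delta)$-type condition in the intrinsic metric), so this either needs a proof or a precise citation --- note that for every application in the paper one works in $HW^{s,p}_0(\Om)$, where functions are already globally defined and no extension is needed. Second, for the continuous embedding your Riesz-potential plus Hardy--Littlewood--Sobolev route implicitly identifies the Gagliardo space with a Bessel-potential-type space of the fractional sub-Laplacian, which is legitimate for $p=2$ but not for general $p$; either argue directly by summing dyadic ball-average oscillations controlled by the Gagliardo seminorm, or simply invoke the inequality of \cite{AM} (or \cite{KS18}), which is exactly what the paper uses to define the norm \eqref{norm}.
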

The following result follows from \cite[Lemma 3.5]{Caninoetal}.
\begin{Lemma}\label{lem:dino}
	Let $q > 1$ and $\epsilon > 0$. In the plane $\mathbb{R}^2$, using the notation $p = (x, y)$, we define:
	\[
	S_{\epsilon}^x\,:=\{x\geq \epsilon \}\cap\{y\geq 0\}\qquad  S_{\epsilon}^y\,:=\{y\geq \epsilon \}\cap\{x\geq 0\}\,.
	\]
	It follows that
	\begin{equation*}
	|x^q-y^q|\geq {\epsilon}^{q-1}|x-y|\qquad\text{in}\quad S_{\epsilon}^x\cup S_{\epsilon}^y\,.
	\end{equation*}
\end{Lemma}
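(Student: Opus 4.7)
The proof hinges on the symmetry $(x,y)\mapsto(y,x)$: the quantities $|x^q - y^q|$ and $|x-y|$ are both invariant under swapping, while $S_\epsilon^y$ is exactly the image of $S_\epsilon^x$ under this swap. Hence it suffices to verify the inequality on $S_\epsilon^x = \{x \geq \epsilon,\, y \geq 0\}$, after which I would split into the subcases $y \geq x$ and $y \leq x$.

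The subcase $y \geq x \geq \epsilon$ is immediate from a direct integral bound: since $t \geq \epsilon$ throughout $[x,y]$,
\[
y^q - x^q \;=\; q\int_x^y t^{q-1}\,dt \;\geq\; q\,\epsilon^{q-1}(y-x) \;\geq\; \epsilon^{q-1}(y-x),
\]
where $q > 1$ is used in the last step.

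The subcase $0 \leq y \leq x$ with $x \geq \epsilon$ is the one I expect to be the main obstacle, since a naive mean-value estimate on $[y,x]$ degenerates when $y$ approaches $0$. My plan is to fix $x$ and introduce
\[
f(y) \;:=\; x^q - y^q - \epsilon^{q-1}(x - y), \qquad y \in [0,x],
\]
and prove $f \geq 0$. A direct computation gives $f(x) = 0$ and $f(0) = x\,(x^{q-1} - \epsilon^{q-1}) \geq 0$ thanks to $x \geq \epsilon$. Since $f''(y) = -q(q-1)\,y^{q-2} \leq 0$ on $(0,x]$, the function $f$ is concave on $[0,x]$; any concave function with nonnegative values at both endpoints of an interval is nonnegative throughout, so $f(y) \geq 0$ for every $y \in [0,x]$. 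This delivers $x^q - y^q \geq \epsilon^{q-1}(x-y)$ and completes the proof.
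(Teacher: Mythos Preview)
Your proof is correct. The paper does not actually prove this lemma; it simply records that the result follows from \cite[Lemma~3.5]{Caninoetal}, so there is no in-paper argument to compare against. Your symmetry reduction and the integral estimate for the case $y\geq x\geq\epsilon$ are standard, and the concavity trick for $0\leq y\leq x$ is a clean way to handle the degeneracy near $y=0$ that a naive mean-value bound would miss. One cosmetic remark: when $1<q<2$ the second derivative $f''(y)=-q(q-1)y^{q-2}$ is not defined at $y=0$, so to be completely safe you can argue concavity on $[0,x]$ directly from the fact that $f'(y)=-qy^{q-1}+\epsilon^{q-1}$ is continuous and nonincreasing on $[0,x]$ (or invoke continuity of $f$ at $0$ together with $f''\leq 0$ on $(0,x)$); the conclusion is unchanged.
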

For the following algebraic inequality, see \cite[Lemma 2.1]{Dama}.
\begin{Lemma}\label{alg}
Let $1<p<\infty$. Then for every $x,y\in\mathbb{R}^k$, there exists a constant $C=C(p)>0$ such that
$$
\langle J_p(x)-J_p(y),x-y\rangle\geq C(|x|+|y|)^{p-2}|x-y|^2.
$$
\end{Lemma}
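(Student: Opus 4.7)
\textbf{Proof proposal for Lemma \ref{alg}.}

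The plan is to obtain the inequality through an integral representation of $J_p(x)-J_p(y)$ along the segment joining $y$ and $x$. Setting $z(t):=y+t(x-y)$ for $t\in[0,1]$, one formally has
$$
\langle J_p(x)-J_p(y),\,x-y\rangle=\int_{0}^{1}\langle (J_p)'(z(t))(x-y),\,x-y\rangle\,dt,
$$
where, for $z\neq0$, the Jacobian of $z\mapsto|z|^{p-2}z$ is
$$
(J_p)'(z)=|z|^{p-2}I+(p-2)|z|^{p-4}zz^{T},
$$
so that $\langle (J_p)'(z)\xi,\xi\rangle=|z|^{p-2}|\xi|^{2}+(p-2)|z|^{p-4}\langle z,\xi\rangle^{2}$ for any $\xi\in\mathbb{R}^k$. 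The first conceptual step is to recognize that, by Cauchy--Schwarz $\langle z,\xi\rangle^{2}\leq|z|^{2}|\xi|^{2}$, the sign of $p-2$ allows a uniform lower bound: for $1<p<2$ the inequality flips because $p-2<0$, and one gets $\langle (J_p)'(z)\xi,\xi\rangle\geq(p-1)|z|^{p-2}|\xi|^{2}$, while for $p\geq2$ the cross-term is nonnegative and one trivially has $\langle (J_p)'(z)\xi,\xi\rangle\geq|z|^{p-2}|\xi|^{2}$. Setting $c_p=\min\{1,p-1\}>0$, this yields in all cases
$$
\langle J_p(x)-J_p(y),x-y\rangle\ \geq\ c_p\,|x-y|^{2}\int_{0}^{1}|z(t)|^{p-2}\,dt.
$$

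The remaining task is to show $\int_{0}^{1}|z(t)|^{p-2}\,dt\geq C'(p)(|x|+|y|)^{p-2}$, and this splits naturally in two cases. When $1<p<2$ the exponent is negative, and the elementary bound $|z(t)|\leq(1-t)|y|+t|x|\leq|x|+|y|$ gives $|z(t)|^{p-2}\geq(|x|+|y|)^{p-2}$ pointwise in $t$, hence $C'(p)=1$ suffices. When $p\geq2$ a pointwise lower bound on $|z(t)|$ on some subinterval of positive length is needed: assuming without loss of generality $|x|\geq|y|$, the reverse triangle inequality yields $|z(t)|\geq t|x|-(1-t)|y|\geq(2t-1)|x|$ for $t\in[1/2,1]$. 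Integrating $(2t-1)^{p-2}$ over $[1/2,1]$ and combining with $|x|\geq(|x|+|y|)/2$ produces a constant of the form $C'(p)=1/\bigl(2^{p-1}(p-1)\bigr)$. In both regimes one finally obtains the desired inequality with $C(p)=c_p\,C'(p)>0$.

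The one delicate point of the argument, which I expect to be the main obstacle, is making the integral representation of $J_p(x)-J_p(y)$ rigorous when the segment from $y$ to $x$ passes through the origin (so that $|z(t)|^{p-4}$ becomes singular). For $p\geq2$ this is harmless since the integrand stays bounded, but for $1<p<2$ one must justify the fundamental theorem of calculus for $t\mapsto J_p(z(t))$ despite the singularity of $(J_p)'$ at $z=0$. This can be circumvented by a standard approximation: replace $J_p$ by $J_{p,\varepsilon}(z):=(|z|^{2}+\varepsilon)^{(p-2)/2}z$, which is $C^{1}$ on $\mathbb{R}^k$, run the same computation to obtain the lower bound with $(|z(t)|^{2}+\varepsilon)^{(p-2)/2}$ in place of $|z(t)|^{p-2}$, and let $\varepsilon\to0^{+}$ using monotone/dominated convergence (the pointwise limit and the monotonicity of $\varepsilon\mapsto(|z|^{2}+\varepsilon)^{(p-2)/2}$ for $p<2$ handle both sides cleanly). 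Once this approximation is in place, the proof is complete.
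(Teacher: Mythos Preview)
Your argument is correct. The integral representation along the segment, the lower bound $\langle (J_p)'(z)\xi,\xi\rangle\geq\min\{1,p-1\}\,|z|^{p-2}|\xi|^{2}$, the two-case estimate of $\int_0^1|z(t)|^{p-2}\,dt$, and the $\varepsilon$-regularization to handle the singularity at the origin for $1<p<2$ are all sound and standard.

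Regarding the comparison: the paper does not actually give a proof of this lemma; it simply cites \cite[Lemma~2.1]{Dama} (Damascelli). Your write-up therefore supplies what the paper outsources. The route you take---differentiating $t\mapsto J_p(y+t(x-y))$ and bounding the resulting quadratic form---is in fact the classical approach behind that reference, so there is no genuine methodological divergence to discuss; you have reconstructed the expected proof.
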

Further, for the following algebraic inequality, we refer to \cite[Lemma A.2]{BP}.
\begin{Lemma}\label{BPalg}
Let $1<p<\infty$ and $g:\mathbb{R}\to\mathbb{R}$ be an increasing function. We define
$$
G(t)=\int_{0}^{t}g'(s)^\frac{1}{p}\,ds,\quad t\in\mathbb{R}.
$$
Then for every $a,b\in\mathbb{R}$, we have
$$
J_p(a-b)(g(a)-g(b))\geq |G(a)-G(b)|^p.
$$
\end{Lemma}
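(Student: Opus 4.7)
The plan is to reduce the inequality to a single application of Hölder's inequality in one variable, after using the monotonicity of $g$ to handle the sign. First, I would observe that since $g$ is increasing, $a-b$ and $g(a)-g(b)$ have the same sign, so that
\[
J_p(a-b)(g(a)-g(b)) = |a-b|^{p-2}(a-b)(g(a)-g(b)) = |a-b|^{p-1}\,|g(a)-g(b)|.
\]
This rewriting shows the right-hand side is manifestly nonnegative and in a form suitable for Hölder.

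Next, by symmetry I would assume without loss of generality that $a\geq b$ (if $a=b$ both sides vanish; if $a<b$ swap the roles and note both sides are unchanged, since $|G(a)-G(b)|^p$ is symmetric and the left-hand side equals $|a-b|^{p-1}|g(a)-g(b)|$). Under this assumption, the hypotheses on $g$ ensure that $g'\geq 0$ a.e.\ and that both $g$ and $G$ can be recovered as integrals of their derivatives on $[b,a]$, giving
\[
g(a)-g(b)=\int_b^a g'(s)\,ds,\qquad G(a)-G(b)=\int_b^a g'(s)^{1/p}\,ds.
\]

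The heart of the argument is then Hölder's inequality with conjugate exponents $p$ and $p'=p/(p-1)$ applied to the factors $g'(s)^{1/p}$ and $1$ on the interval $[b,a]$:
\[
\int_b^a g'(s)^{1/p}\,ds \;\leq\; \left(\int_b^a g'(s)\,ds\right)^{1/p}\left(\int_b^a 1\,ds\right)^{1/p'} \;=\; \bigl(g(a)-g(b)\bigr)^{1/p}\,(a-b)^{(p-1)/p}.
\]
Raising both sides to the $p$-th power yields
\[
|G(a)-G(b)|^p \leq (a-b)^{p-1}\bigl(g(a)-g(b)\bigr) = J_p(a-b)\bigl(g(a)-g(b)\bigr),
\]
which is precisely the claim. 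There is no real obstacle here; the only subtlety is the regularity assumed on $g$ (so that the fundamental theorem of calculus applies to both $g$ and $G$), which is implicit in the statement via the use of $g'$ in the definition of $G$.
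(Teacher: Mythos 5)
Your argument is essentially the standard one for this lemma; the paper itself gives no proof but cites \cite[Lemma A.2]{BP}, whose proof is exactly this H\"older-in-one-variable computation, so in substance you have reproduced the intended argument. One step, however, is stated too strongly: for a general increasing $g$ (the lemma assumes nothing more, and in the paper it is applied to functions like truncated power maps, where one could check regularity, but the statement itself allows jumps or a singular part) the fundamental theorem of calculus gives only the one-sided bound
\[
\int_b^a g'(s)\,ds \;\leq\; g(a)-g(b),
\]
by Lebesgue's differentiation theorem for monotone functions, not equality; your remark that the needed regularity is ``implicit in the statement'' is not quite right, since $g'$ existing a.e.\ and being locally integrable does not make $g$ absolutely continuous. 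Fortunately the inequality goes in the direction you need: for $G$ there is no issue at all, because $G(a)-G(b)=\int_b^a g'(s)^{1/p}\,ds$ holds by the very definition of $G$ as an integral, and then H\"older plus the displayed inequality give $|G(a)-G(b)|^p\leq (a-b)^{p-1}\int_b^a g'(s)\,ds\leq (a-b)^{p-1}\bigl(g(a)-g(b)\bigr)$. With that one-line correction (replace the FTC equality for $g$ by ``$\leq$''), your proof is complete and requires no extra hypotheses.
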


\section{Preliminaries}
\subsection{Approximate problem}
For $n\in\mathbb{N}$ and a nonnegative function $f\in L^1(\Om)\setminus\{0\}$, let $f_n(x):=\min\{f(x),n\}$ and consider the following approximated problem:
\begin{equation}\label{approxeqn}
L_{s,p}u ={f_n(x)}{\Big(u^{+}+\frac{1}{n}\Big)^{-\delta(x)}}\text{ in }\Omega,\quad u=0\text{ in }\mathbb{H}^N\setminus\Om.
\end{equation}
Our main aim in this section is to establish existence of $u_n$ and prove some a-priori estimates.
\begin{Lemma}\label{approx}
Let $\delta:\overline{\Om}\to(0,\infty)$ be a continuous function. For each $n\in\mathbb{N}$, there is a unique positive solution $u_n\in HW_{0}^{s,p}(\Omega)\cap L^{\infty}(\Omega)$ to \eqref{meqn}. Additionally, $u_{n+1}\geq u_n$ in $\Omega$, for every $n\in\mathbb{N}$. Moreover, for every $n\in\mathbb{N}$ and every $\omega\Subset\Omega$, there exists a constant $C(\omega)>0$ (independent of $n$) such that $u_n\geq C(\omega)>0$ in $\omega$.
\end{Lemma}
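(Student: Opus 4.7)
The plan is to establish the four claims (existence with uniqueness, nonnegativity and $L^\infty$ bound, monotonicity in $n$, and uniform local positivity) in that order, exploiting at each step the fact that for fixed $n$ the right-hand side of \eqref{approxeqn} is uniformly bounded by $n^{1+\delta^*}$, where $\delta^*:=\max_{\overline\Omega}\delta<\infty$ (finite because $\delta$ is continuous on a compact set).

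\textit{Existence, uniqueness, nonnegativity and boundedness.} I would apply Theorem \ref{MB} to the operator $T_n\colon HW_0^{s,p}(\Omega)\to HW_0^{s,p}(\Omega)^*$ given by
$$\langle T_n(u),\phi\rangle:=\int_{\mathbb{H}^N}\!\!\int_{\mathbb{H}^N} J_p(u(x)-u(y))(\phi(x)-\phi(y))\,d\mu-\int_\Omega f_n(u^++1/n)^{-\delta(x)}\phi\,dx.$$
The first term is the standard strictly monotone, coercive, demicontinuous $p$-Laplace-type operator; the lower-order term is a bounded continuous map into the dual (by Lemma \ref{emb} and the $L^\infty$ bound on $f_n(u^++1/n)^{-\delta(x)}$), and subtracting it preserves monotonicity since $t\mapsto (t^++1/n)^{-\delta(x)}$ is nonincreasing. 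Coercivity is immediate from the uniform $L^\infty$ bound on the zeroth-order term. Theorem \ref{MB} therefore yields a unique $u_n\in HW_0^{s,p}(\Omega)$. To show $u_n\geq 0$ I would test with $\phi=u_n^-\in HW_0^{s,p}(\Omega)$ and use the pointwise inequality $J_p(a-b)(a^--b^-)\leq -|a^--b^-|^p$ (a case analysis on the signs of $a,b$) against a nonnegative right-hand side, obtaining $\|u_n^-\|^p\leq 0$, hence $u_n^-\equiv 0$. With $u_n\geq 0$, the right-hand side of \eqref{approxeqn} lies in $L^\infty(\Omega)$, and a Stampacchia/Moser iteration (or a direct truncation argument) gives $u_n\in L^\infty(\Omega)$.

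\textit{Monotonicity in $n$.} To prove $u_{n+1}\geq u_n$ I would test both equations with $\phi=(u_n-u_{n+1})^+\in HW_0^{s,p}(\Omega)$ and subtract. On the set $\{u_n>u_{n+1}\}$, the chain
$$f_n(u_n+\tfrac{1}{n})^{-\delta(x)}\leq f_n(u_n+\tfrac{1}{n+1})^{-\delta(x)}\leq f_n(u_{n+1}+\tfrac{1}{n+1})^{-\delta(x)}\leq f_{n+1}(u_{n+1}+\tfrac{1}{n+1})^{-\delta(x)},$$
(combining $\tfrac{1}{n+1}<\tfrac{1}{n}$, monotonicity of $t\mapsto(t+\tfrac{1}{n+1})^{-\delta(x)}$, and $f_n\leq f_{n+1}$) makes the resulting right-hand side nonpositive, while a short pointwise case analysis on the signs of $(u_n-u_{n+1})(x)$ and $(u_n-u_{n+1})(y)$ shows
$$\bigl[J_p(u_n(x)-u_n(y))-J_p(u_{n+1}(x)-u_{n+1}(y))\bigr]\bigl[(u_n-u_{n+1})^+(x)-(u_n-u_{n+1})^+(y)\bigr]\geq 0$$
pointwise, so the left-hand side is nonnegative. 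Both sides must therefore vanish, forcing $(u_n-u_{n+1})^+\equiv 0$.

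\textit{Uniform local positivity.} This is the main obstacle, since the constant must be $n$-independent. Thanks to the monotonicity just obtained, it suffices to prove $u_1\geq C(\omega)>0$ on every $\omega\Subset\Omega$. The equation $L_{s,p}u_1=f_1(u_1+1)^{-\delta(x)}$ has a nonnegative right-hand side which is not identically zero (since $f\not\equiv 0$), so $u_1$ is a nontrivial nonnegative weak supersolution of $L_{s,p}w\geq 0$. The Heisenberg-group regularity result of \cite{Picci} gives continuity of $u_1$ inside $\Omega$, and a strong minimum principle for such supersolutions --- available through a logarithmic-type estimate, or by comparison on a small ball $B_r\Subset\{f_1>0\}\cap\Omega$ with the torsion-type function $\psi$ solving $L_{s,p}\psi=\chi_{B_r}$ --- upgrades $u_1\geq 0$ to $u_1>0$ throughout $\Omega$. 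Setting $C(\omega):=\min_{\overline\omega}u_1>0$ and invoking $u_n\geq u_1$ completes the proof.
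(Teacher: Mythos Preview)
Your proposal is correct and follows essentially the same route as the paper: existence and uniqueness via Theorem~\ref{MB} applied to the very operator you call $T_n$, nonnegativity by testing with the negative part, $L^\infty$ via a Stampacchia level-set iteration, monotonicity by testing both equations with $(u_n-u_{n+1})^+$, and uniform local positivity from the strong minimum principle of \cite{Picci} applied to $u_1$ combined with $u_n\geq u_1$. The only cosmetic differences are that the paper derives uniqueness by repeating the comparison argument rather than invoking strict monotonicity of $T_n$ directly, and it cites \cite[Theorem~3.7]{Picci} outright for the positivity of $u_1$ rather than sketching a torsion-function comparison.
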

\begin{proof}
We denote the space $HW_0^{s,p}(\Om)$ by $V$ and let $V^*$ be the dual of $HW_0^{s,p}(\Om)$. We define the mapping $S:V\to V^*$ by
$$
\langle S(v),\phi \rangle=\int_{\mathbb{H}^N}\int_{\mathbb{H}^N}J_p(v(x)-v(y))(\phi(x)-\phi(y))\,d\mu-\int_{\Om}f_n(x)\Big(v^+ +\frac{1}{n}\Big)^{-\delta(x)}\phi\,dx,
$$
for every $v,\phi\in V$. It is easy to observe that $S$ is well defined, using Lemma \ref{emb} and H\"older's inequality.
\begin{itemize}
    \item Coercivity: Using Lemma \ref{emb} and H\"older's inequality, we obtain
    $$
    \langle S(v), v\rangle=\|v\|^p-\int_{\Om}f_n(x)\Big(v^+ +\frac{1}{n}\Big)^{-\delta(x)}v\,dx\geq \|v\|^p-C\|v\|,
    $$
    for some constant $C>0$. Since $1<p<\infty$, we obtain that $S$ is coercive.

    \item Demicontinuity: Let $v_k,v\in HW_0^{s,p}(\Om)$ be such that $\|v_k-v\|\to 0$ as $k\to\infty$. Therefore, up to a subsequence, still denoted by $v_k$, it follows that $v_k\to v$ a.e. in $\Om$ and the sequence
\begin{equation}\label{bdd1}
\left\{\frac{J_p(v_k(x)-v_k(y))}{|y^{-1}\circ x|^{\frac{Q+s\,p}{p'}}}\right\}_{k\in {\mathbb N}}\quad \text{is bounded in $L^{p'}({\mathbb H}^{2N})$}.
\end{equation}
By the pointwise convergence of $v_k$ to $v$, we have
$$
\frac{J_p(v_k(x)-v_k(y))}{|x - y|^{\frac{Q+s\,p}{p'}}}\to \frac{J_p(v(x)-v(y))}{|y^{-1}\circ x|^{\frac{Q+s\,p}{p'}}}
\quad \text{a.e.\ in ${\mathbb H}^{2N}$}.
$$
Using the above fact along with \eqref{bdd1}, we obtain up to a subsequence that
$$
\frac{J_p(v_k(x)-v_k(y))}{|y^{-1}\circ x|^{\frac{Q+s\,p}{p'}}}
\rightharpoonup \frac{J_p(v(x)-v(y))}{|y^{-1}\circ x|^{\frac{Q+s\,p}{p'}}}
\quad \text{weakly in $L^{p'}({\mathbb H}^{2N})$}.
$$
Then, since
$$
\frac{\phi(x) - \phi(y)}{|y^{-1}\circ x|^{\frac{Q+s\,p}{p}}}\in L^p({\mathbb H}^{2N}),
$$
we conclude that
\begin{equation}\label{demi1}
\lim_{k\to\infty}\int_{\mathbb{H}^n}\int_{\mathbb{H}^N}J_p(v_k(x)-v_k(y))(\phi(x)-\phi(y))\,d\mu=\int_{\mathbb{H}^N}\int_{\mathbb{H}^N}J_p(v(x)-v(y))(\phi(x)-\phi(y))\,d\mu,
\end{equation}
for every $\phi\in HW_0^{s,p}(\Om)$. Moreover, by the Lebesgue's dominated convergence theorem, we observe that
    \begin{equation}\label{dctap1}
    \lim_{k\to\infty}\int_{\Om}f_n(x)\Big(v_{k}^+ +\frac{1}{n}\Big)^{-\delta(x)}\phi\,dx=\int_{\Om}f_n(x)\Big(v^+ +\frac{1}{n}\Big)^{-\delta(x)}\phi\,dx,\quad \forall\phi\in HW_0^{s,p}(\Om).
    \end{equation}
Hence, combining \eqref{demi1} and \eqref{dctap1}, we arrive at 
$$
\lim_{k\to\infty}\langle S(v_k), \phi\rangle=\langle S(v), \phi\rangle,\quad \forall \phi\in HW_0^{s,p}(\Om),
$$
and hence $S$ is demicontinuous.

\item Monotonicity of $S$: Let $u_1,u_2\in V$. Then, we observe that
\begin{align*}
&\langle S(u_1)-S(u_2),u_1-u_2\rangle\\
&=\int_{\mathbb{H}^n}\int_{\mathbb{H}^n}\{J_p(u_1(x)-u_1(y))-J_p(u_2(x)-u_2(y))\}((u_1-u_2)(x)-(u_1-u_2)(y))\,d\mu\\
&\qquad-\int_{\Om}f_n(x)\Big\{\Big(u_1^{+}+\frac{1}{n}\big)^{-\delta(x)}-\Big(u_2^{+}+\frac{1}{n}\Big)^{-\delta(x)}\Big\}(u_1-u_2)\,dx.
\end{align*}
By Lemma \ref{alg}, the first integral above is nonnegative. Moreover, it is easy to check the second integral above is nonpositive. Therefore, $S$ is monotone.
\end{itemize}
Hence, by Theorem \ref{MB}, it follows that $S$ is surjective. Therefore, for every $n\in\mathbb{N}$, there exists $u_n\in V$ such that for every $\phi\in V$, we have
\begin{equation}\label{auxeqn}
\int_{\mathbb{H}^N}\int_{\mathbb{H}^N}J_p(u_n(x)-u_n(y))(\phi(x)-\phi(y))\,d\mu=\int_{\Om}f_n(x)\Big(u_{n}^+ +\frac{1}{n}\Big)^{-\delta(x)}\phi\,dx.
\end{equation}
\textbf{Nonnegativity:} Choosing $\phi=(u_n)_-:=\min\{u_n,0\}$ as a test function in \eqref{auxeqn}, we have
\begin{equation}\label{posi}
\begin{split}
\int\limits_{\mathbb{H}^N}\int\limits_{\mathbb{H}^N}J_p(u_n(x)-u_n(y))\big((u_n)_-(x)-(u_n)_-(y)\big)\,d\mu&=\int_{\Omega}f_n(x)\Big(u_n^++\frac{1}{n}\Big)^{-\delta(x)}(u_n)_-\,dx\leq 0.
\end{split}
\end{equation}
For any $x,y\in\mathbb{H}^N$, it follows from the estimate (3.13) of \cite[page 12]{GUna} that
\begin{equation}\label{pos}
J_p(u_n(x)-u_n(y))\big((u_n)_-(x)-(u_n)_-(y)\big)\geq |(u_n)_-(x)-(u_n)_-(y)|^p.
\end{equation}
Using \eqref{pos} in \eqref{posi} we obtain
$
\|(u_n)_-\|^p=0,
$
which gives $(u_n)_-=0$. Therefore, 
\begin{equation}\label{non-neg}
u_n\geq 0\text{ in } \mathbb{H}^N\text{ for every } n\in\mathbb{N}.
\end{equation}
\textbf{Monotonicity and Uniqueness:} Let $n\in\mathbb{N}$ and let $u_n,u_{n+1}\in HW_0^{s,p}(\Om)$ be two solutions of the problem \eqref{approxeqn}. Then by \eqref{non-neg}, both $u_n$ and $u_{n+1}$ are nonnegative in $\mathbb{H}^N$. Then for every $\phi\in HW_0^{s,p}(\Om)$, the following holds:
\begin{equation}\label{auxeqn11}
\begin{split}
\int\limits_{\mathbb{H}^N}\int\limits_{\mathbb{H}^N}J_p(u_n(x)-u_n(y))\big(\phi(x)-\phi(y)\big)\,d\mu=\int_{\Omega}f_n(x)\Big(u_n+\frac{1}{n}\Big)^{-\delta(x)}\,\phi\, dx
\end{split}
\end{equation}
and
\begin{equation}\label{auxeqn21}
\begin{split}
\int\limits_{\mathbb{H}^N}\int\limits_{\mathbb{H}^N}J_p(u_{n+1}(x)-u_{n+1}(y))\big(\phi(x)-\phi(y)\big)\,d\mu=\int_{\Omega}f_{n+1}(x)\Big(u_{n+1}+\frac{1}{n+1}\Big)^{-\delta(x)}\,\phi\, dx.
\end{split}
\end{equation}
Choosing $\phi=w=(u_n-u_{n+1})^+\in HW^{s,p}_0(\Omega)$ in the equations \eqref{auxeqn11} and \eqref{auxeqn21} above and subtracting the second from the first, concerning the r.h.s. (and recalling that $f_n\leq f_{n+1}$ a.e.) we get
\begin{align*}
&\int_{\Omega}{f_n(x)}{\Big(u_n+\frac{1}{n}\Big)^{-\delta(x)}} \,(u_n-u_{n+1})^+\, dx-\int_{\Omega}{f_{n+1}(x)}{\Big(u_{n+1}+\frac{1}{n+1}\Big)^{-\delta(x)}} (u_n-u_{n+1})^+\,dx\\
&\leq \int_{\Omega} f_{n+1}(u_n-u_{n+1})^+
\frac{\Big(u_{n+1}+\frac{1}{n+1}\Big)^{\delta(x)}-\Big(u_n+\frac{1}{n}\Big)^{\delta(x)}}{\Big(u_n+\frac{1}{n}\Big)^{\delta(x)}\Big(u_{n+1}+\frac{1}{n+1}\Big)^{\delta(x)}}dx\leq 0.
\end{align*}
Then, we conclude that
\begin{equation}
\label{in-neg}
\int_{\mathbb{H}^{N}}\int_{\mathbb{H}^{N}}{\big (J_p(u_n(x) - u_n(y))-J_p(u_{n+1}(x) - u_{n+1}(y))\big)\,
	(w(x) - w(y))}\,d\mu\leq 0.
\end{equation}	
Now, arguing exactly as in the proof of \cite[Lemma 9]{LL}, we get
$$
\big (J_p(u_n(x) - u_n(y))-J_p(u_{n+1}(x) - u_{n+1}(y))\big)\, (w(x) - w(y))\geq 0,\quad\text{for a.e. $(x,y)\in {\mathbb H}^{2N}$},
$$
with the  {\em strict} inequality, unless it holds
\begin{equation}
\label{concl}
(u_n(x)-u_{n+1}(x))^+=(u_n(y)-u_{n+1}(y))^+,\quad\text{for a.e. $(x,y)\in {\mathbb H}^{2N}$}.
\end{equation}
On the other hand, by \eqref{in-neg}, we have
$$
\big (I_p(u_n(x) - u_n(y))-I_p(u_{n+1}(x) - u_{n+1}(y))\big)\, (w(x) - w(y))=0,\quad\text{for a.e. $(x,y)\in {\mathbb H}^{2N}$}.
$$
Therefore, \eqref{concl} holds true, namely
\begin{equation*}
(u_n(x)-u_{n+1}(x))^+=C,\quad\text{for a.e. $(x,y)\in {\mathbb H}^{2N}$},
\end{equation*}
for some constant $C$. Since $u_n=u_{n+1}=0$ on ${\mathbb H}^{N}\setminus\Omega$ it follows that $C=0$, which implies in turn that
$u_n(x)\leq u_{n+1}(x)$, for a.e.\ $x\in\Omega$. This concludes the proof of the monotonicity of $u_n$. Uniqueness follows similarly.

\noindent
\textbf{Boundedness:}
To establish boundedness, let
$$
A(k):=\{x\in\Omega:u_n(x)\geq k\}\,\,\text{for any}\,\, k\geq 1.
$$
Taking $\phi_k:=(u_n-k)^+=\max\{u_n-k,0\}$ as a test function in \eqref{approxeqn}, and using the estimate (3.9) from \cite[page 11]{GUna}, we obtain
\begin{equation}\label{tstbd1}
\begin{split}
\|\phi_k\|^p\leq \int\limits_{\mathbb{H}^N}\int\limits_{\mathbb{H}^N}J_p(u_n(x)-u_n(y))\big(\phi_k(x)-\phi_k(y)\big)\,d\mu=\int_{\Omega}f_n(x)\Big(u_n+\frac{1}{n}\Big)^{-\delta(x)}\phi_k\,dx.
\end{split}
\end{equation}
Therefore, by Lemma \ref{emb}, using the continuity of the embedding $HW_0^{s,p}(\Om)\hookrightarrow L^l(\Omega)$ for some $l>p$, we arrive at
\begin{multline}
\label{tstbd2}
\|\phi_k\|^p\leq\int_{\Omega}n^{\delta+1}\phi_k\,dx
\leq \|n^{\delta(x)+1}\|_{L^\infty(\Om)}\int_{A(k)}(u_n-k)\,dx\leq C|A(k)|^\frac{l-1}{l}\|\phi_k\|,
\end{multline}
for some constant $C>0$ depending on $n$. Hence, we have
\begin{equation}\label{new}
    \|\phi_k\|^p \leq C|A(k)|^\frac{p(l-1)}{l(p-1)},
\end{equation}
for some positive constant $C$ depending on $n$. Now choose $h$ such that $1\leq k<h$. Then $u(x)-k \geq (h-k)$ on $A(h)$ and $A(h)\subset A(k)$. Noting this fact along with \eqref{new}, we get
\begin{multline*}
(h-k)^p|A(h)|^\frac{p}{l}\leq\left(\int_{A(h)}\big(u_n(x)-k\big)^l\,dx\right)^\frac{p}{l}\leq\left(\int_{A(k)}\big(u_n(x)-k\big)^l~dx\right)^\frac{p}{l}\leq C\|\phi_k\|^p \leq C|A(k)|^\frac{p(l-1)}{l(p-1)},
\end{multline*}
for some positive constant $C>0$ depending on $n$. Therefore, we have
$$
|A(h)|\leq\frac{C}{(h-k)^l}|A(k)|^\frac{l-1}{p-1}.
$$
Note that $\frac{l-1}{p-1}>1$. Therefore, by applying \cite[Lemma B.1]{Stam}, we get
$$
\|u_n\|_{L^\infty(\Omega)}\leq C,
$$
for some positive constant $C$ depending on $n$. Hence, $u_n\in L^\infty(\Om)$.\\
\textbf{Uniform positivity:} By \eqref{non-neg}, we have $u_1\geq 0$ in $\mathbb{H}^N$. Since $u_1=0$ in $\mathbb{H}^N\setminus\Om$ and $f\not\equiv 0$, we have $u_1\not\equiv 0$ in $\Omega$. Thus, using \cite[Theorem 3.7]{Picci} for every $\omega\Subset\Omega$, there exists a constant $C(\omega)>0$ such that $u_1\geq C(\omega)>0$ in $\Omega$. Again, by the monotonicity we have $u_n\geq u_1$ in $\Omega$ for every $n\in\mathbb{N}$. Hence, for every $\omega\Subset\Omega$ and every $n\in\mathbb{N}$,
$$
u_n(x)\geq C(\omega)>0,\text{ for }x\in\omega,
$$
where $C(\omega)>0$ is a constant independent of $n$.
\end{proof}
\begin{Remark}\label{rmkapprox}
By Lemma \ref{approx}, the monotonicity of the sequence ${u_n}$ allows us to define its pointwise limit in $\Omega$, denoted by $u$. Consequently, we have $u \geq u_n$ in $\mathbb{H}^N$ for all $n \in \mathbb{N}$. In what follows, we show that $u$ is the desired solution to problem \eqref{meqn}.
\end{Remark}

\begin{Remark}\label{altrmk}
We note that the same proof of Lemma \ref{approx} also applies in the Euclidean setting. Moreover, we emphasize that Lemma \ref{approx} can alternatively be established using Schauder's fixed point theorem, as demonstrated in \cite{Caninoetal}. For this purpose, the result stated in Lemma \ref{approxnew}—which addresses the more general non-singular case—will be instrumental. Its proof closely follows the argument used in Lemma \ref{approx}, with the primary difference being the consideration of the mapping 
$S:V\to V^*$ defined by
$$
\langle S(v),\phi \rangle=\int_{\mathbb{H}^N}\int_{\mathbb{H}^N}J_p(v(x)-v(y))(\phi(x)-\phi(y))\,d\mu-\int_{\Om}g\phi\,dx,
$$
for every $v,\phi\in V$, where $V=HW_0^{s,p}(\Om)$ and $V^*$ being the dual of $V$.
\end{Remark}
\begin{Lemma}\label{auxresult}
Let $g\in L^{\infty}(\Om)\setminus\{0\}$ be nonnegative function in $\Omega$. Then, there exists a unique solution $u\in HW_0^{s,p}(\Omega)\cap L^{\infty}(\Om)$ of the problem
\begin{equation}\label{approxnew}
\begin{split}
L_{s,p}u=g\text{ in }\Om,\quad u>0\text{ in }\Om,\quad u=0\text{ in }\mathbb{H}^N\setminus\Om.
\end{split}
\end{equation}
Furthermore, for every $\omega\Subset\Om$, there exists a constant $C(\omega)$ such that $u\geq C(\omega)>0$ in $\omega$.
\end{Lemma}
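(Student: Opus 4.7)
The proof closely parallels that of Lemma \ref{approx}, with the crucial simplification that the right-hand side $g\in L^\infty(\Omega)$ is a fixed bounded function rather than a singular term in $u$. My plan is to invoke the Browder--Minty surjectivity Theorem \ref{MB} on the operator $S:V\to V^{*}$ with $V=HW_0^{s,p}(\Omega)$ defined in Remark \ref{altrmk}, and then to derive the qualitative properties (positivity, boundedness, strict positivity on compact subsets) by the same test-function arguments used for the approximate problem.

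First I would verify the three structural hypotheses of Theorem \ref{MB} for $S$. Coercivity follows from $\langle S(v),v\rangle\ge \|v\|^{p}-\|g\|_{L^{\infty}(\Omega)}\,|\Omega|^{1/p'}\,C_{\mathrm{emb}}\|v\|$, using Lemma \ref{emb}. Demicontinuity is exactly as in Lemma \ref{approx}: if $v_k\to v$ in norm, then $J_p(v_k(x)-v_k(y))/|y^{-1}\circ x|^{(Q+sp)/p'}$ converges weakly in $L^{p'}$, while the linear term $-\int_\Omega g\phi\,dx$ is independent of $v$ and trivially continuous. For strict monotonicity, the nonlocal term gives
\[
\langle S(u_1)-S(u_2),u_1-u_2\rangle=\int_{\mathbb{H}^N}\!\!\int_{\mathbb{H}^N}\bigl(J_p(u_1(x)-u_1(y))-J_p(u_2(x)-u_2(y))\bigr)\bigl((u_1-u_2)(x)-(u_1-u_2)(y)\bigr)\,d\mu,
\]
which by Lemma \ref{alg} is nonnegative, and the standard argument (as in the uniqueness part of Lemma \ref{approx}) shows it vanishes only when $u_1-u_2$ is constant, hence zero by the boundary condition. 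Thus $S$ is surjective and injective, giving existence and uniqueness of $u\in V$ with $\langle S(u),\phi\rangle=0$ for all $\phi\in V$.

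Next I would establish the qualitative properties of $u$. Nonnegativity: testing with $\phi=u_{-}\in V$ and using the pointwise inequality $J_p(u(x)-u(y))(u_{-}(x)-u_{-}(y))\ge |u_{-}(x)-u_{-}(y)|^{p}$ (cf.\ estimate (3.13) in \cite{GUna}, as cited in Lemma \ref{approx}), together with $\int_{\Omega}g\,u_{-}\,dx\le 0$, yields $\|u_{-}\|=0$, so $u\ge 0$ in $\mathbb{H}^{N}$. Boundedness: for $k\ge 1$ let $A(k)=\{u\ge k\}$ and test with $(u-k)^{+}\in V$; using the estimate (3.9) of \cite{GUna} on the nonlocal form and $g\in L^{\infty}$ in place of the singular right-hand side, the De Giorgi--Stampacchia iteration from Lemma \ref{approx} carries over verbatim and produces $\|u\|_{L^{\infty}(\Omega)}\le C(\|g\|_{L^{\infty}},|\Omega|,s,p,Q)$.

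Finally, for strict positivity on compact subsets: since $g\not\equiv 0$ and $u\ge 0$ with $u\equiv 0$ outside $\Omega$, the weak equation $L_{s,p}u=g\ge 0$ together with $u\not\equiv 0$ (which follows by testing with any $\phi\ge 0$ not identically zero) allows a direct application of the weak Harnack / strong minimum principle of \cite[Theorem 3.7]{Picci} on each $\omega\Subset\Omega$, yielding $u\ge C(\omega)>0$ in $\omega$; this is exactly the step used at the end of Lemma \ref{approx}. The main technical point is simply to confirm that the cited nonlocal Harnack estimate in \cite{Picci} applies with a bounded, nonnegative, nontrivial right-hand side; I do not expect any genuine obstacle, since every other ingredient is algebraic or a direct transcription of the argument already used for problem \eqref{approxeqn}.
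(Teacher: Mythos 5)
Your proposal is correct and follows essentially the same route the paper intends: as indicated in Remark \ref{altrmk}, the paper proves Lemma \ref{auxresult} by repeating the argument of Lemma \ref{approx} with the operator $\langle S(v),\phi\rangle=\int\int J_p(v(x)-v(y))(\phi(x)-\phi(y))\,d\mu-\int_\Omega g\phi\,dx$, i.e.\ Theorem \ref{MB} for existence/uniqueness, testing with $u_-$ and a Stampacchia iteration for nonnegativity and boundedness, and \cite[Theorem 3.7]{Picci} for the uniform positivity on compact subsets. Your verification of coercivity, demicontinuity, strict monotonicity and of $u\not\equiv 0$ fills in exactly the steps the paper leaves implicit, so no genuinely different approach or gap is involved.
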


\subsection{A-priori estimates of the approximate solutions}
We now establish a series of boundedness estimates (Lemmas \ref{apun1}–\ref{apun2}) for the sequence of positive solutions $\{u_n\}_{n \in \mathbb{N}}$ to problem \eqref{approxeqn}, as provided by Lemma \ref{approx}. These estimates play a crucial role in deriving the existence and regularity results.
\begin{Lemma}\label{apun1}(Variable singular exponent)
Let $\delta:\overline{\Om}\to(0,\infty)$ be a continuous function satisfying the condition $(P_{\epsilon,\delta_*})$ for some $\epsilon>0$ and for some $\delta_*>0$. Suppose that $f\in L^m(\Omega)\setminus\{0\}$ is nonnegative, where $m=
\Big(\frac{(\delta_{*}+p-1)p_s^{*}}{p\delta_{*}}\Big)^{'}$. Assume that $\{u_n\}_{n\in\mathbb{N}}$ is the sequence, consisting of solutions of \eqref{approx} given in Lemma \ref{approx}.
\begin{enumerate}
\item[$(i)$] If $\delta_*=1$, then the sequence $\{u_n\}_{n\in\mathbb{N}}$ is uniformly bounded in $HW_0^{s,p}(\Omega)$.
\item[$(ii)$] If $\delta_*>1$, then the sequence $\left\{u_n^{\frac{\delta_*+p-1}{p}}\right\}_{n\in\mathbb{N}}$ is uniformly bounded in $HW_0^{s,p}(\Omega)$.
\end{enumerate}
\end{Lemma}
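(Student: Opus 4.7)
The strategy is to test the weak formulation of the approximate problem \eqref{approxeqn} with a suitable power of $u_n$ and derive a self-improving inequality. For part (i) with $\delta_*=1$, I would take $\phi = u_n \in HW_0^{s,p}(\Omega)$, which is admissible by Lemma \ref{approx}, obtaining $\|u_n\|^p$ directly on the left-hand side. For part (ii) with $\delta_* > 1$, the natural test function is $\phi = u_n^{\delta_*}$, which lies in $HW_0^{s,p}(\Omega)$ because $u_n \in L^\infty(\Omega)$ and $\delta_* \geq 1$ makes $t \mapsto t^{\delta_*}$ Lipschitz on the range of $u_n$. Applying Lemma \ref{BPalg} with $g(t)=t^{\delta_*}$ produces the lower bound
\[
\int_{\mathbb{H}^N}\!\!\int_{\mathbb{H}^N} J_p(u_n(x)-u_n(y))\bigl(u_n(x)^{\delta_*}-u_n(y)^{\delta_*}\bigr)\,d\mu \;\geq\; C\,\bigl\|u_n^{(\delta_*+p-1)/p}\bigr\|^p,
\]
where the exponent $(\delta_*+p-1)/p$ emerges naturally from integrating $g'(s)^{1/p}$.

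The principal obstacle is to control the right-hand side $\int_\Omega f_n(u_n+1/n)^{-\delta(x)} u_n^{\delta_*}\,dx$ uniformly in $n$, since on $\omega_\epsilon$ the exponent $\delta(x)$ may exceed $\delta_*$. To handle this, I split $\Omega = \Omega_\epsilon \cup \omega_\epsilon$. On $\Omega_\epsilon$, the condition $(P_{\epsilon,\delta_*})$ provides $\delta(x) \leq \delta_*$, and the elementary estimate
\[
(u_n+1/n)^{-\delta(x)} u_n^{\delta_*} \;\leq\; (u_n+1/n)^{\delta_*-\delta(x)} \;\leq\; C\bigl(1+u_n^{\delta_*}\bigr)
\]
follows from $a^\alpha \leq 1+a^{\delta_*}$ for $\alpha\in[0,\delta_*]$. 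On $\omega_\epsilon$, the uniform positivity $u_n \geq C(\omega_\epsilon) > 0$ from Lemma \ref{approx}, together with the continuity of $\delta$ on $\overline{\Omega}$, renders $(u_n+1/n)^{-\delta(x)}$ uniformly bounded, so that part contributes at most $C\int_{\omega_\epsilon} f u_n^{\delta_*}\,dx$. Combining the two pieces and applying H\"older's inequality with exponent $m$ yields the bound $C\|f\|_{L^1}+C\|f\|_{L^m}\|u_n^{\delta_*}\|_{L^{m'}(\Omega)}$.

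The last step is the exponent bookkeeping that makes the specific choice of $m$ work. Setting $v_n := u_n^{(\delta_*+p-1)/p}$, the definition $m = \bigl((\delta_*+p-1)p_s^*/(p\delta_*)\bigr)'$ is tuned precisely so that $m'\delta_* = (\delta_*+p-1)p_s^*/p$, whence $\|u_n^{\delta_*}\|_{L^{m'}} = \|v_n\|_{L^{p_s^*}}^{p\delta_*/(\delta_*+p-1)}$. The Sobolev embedding of Lemma \ref{emb} applied to $v_n \in HW_0^{s,p}(\Omega)$ then gives $\|v_n\|_{L^{p_s^*}} \leq C \|v_n\|$, and together with the lower bound from Lemma \ref{BPalg} I arrive at
\[
C_1 \|v_n\|^p \;\leq\; C_2 + C_3 \|v_n\|^{p\delta_*/(\delta_*+p-1)}.
\]
Since $p>1$ forces $p\delta_*/(\delta_*+p-1) < p$, this self-improving inequality immediately yields the uniform bound on $\|v_n\|$ required in case (ii). For case (i) the same computation with $\delta_* = 1$ uses $(u_n+1/n)^{-\delta(x)} u_n \leq 1 + u_n$ on $\Omega_\epsilon$ and $m' = p_s^*$, reducing matters to $\|u_n\|^p \leq C(1+\|u_n\|)$, which is again decisive because $p>1$.
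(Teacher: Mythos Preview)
Your proposal is correct and follows essentially the same approach as the paper: testing \eqref{approxeqn} with $u_n$ (case $\delta_*=1$) or $u_n^{\delta_*}$ (case $\delta_*>1$), invoking Lemma \ref{BPalg} on the left, splitting $\Omega=\Omega_\epsilon\cup\omega_\epsilon$ and using $(P_{\epsilon,\delta_*})$ together with the uniform positivity from Lemma \ref{approx} on the right, and closing with H\"older and the Sobolev embedding for $v_n=u_n^{(\delta_*+p-1)/p}$ via the exponent identity $m'\delta_*=(\delta_*+p-1)p_s^*/p$. The only cosmetic difference is that the paper splits $\Omega_\epsilon$ further into $\{u_n\leq 1\}$ and $\{u_n>1\}$, whereas you absorb both at once with the elementary bound $a^\alpha\leq 1+a^{\delta_*}$ for $\alpha\in[0,\delta_*]$; the resulting self-improving inequality and conclusion are identical.
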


\begin{proof}
\begin{enumerate}
\item[$(i)$] Taking $\phi=u_n$ as a test function in the weak formulation of \eqref{approxeqn}, we get
\begin{equation}\label{api}
\begin{split}
\|u_n\|^p&\leq\int_{\Omega}f_n\Big(u_n+\frac{1}{n}\Big)^{-\delta(x)}u_n\,dx\leq\int_{\overline{\Omega_\epsilon}\cap\{0<v_n\leq 1\}}fu_n^{1-\delta(x)}\,dx\\&+\int_{\overline{\Omega_\epsilon}\cap\{u_n>1\}}fu_n^{1-\delta(x)}\,dx
+\int_{\omega_\epsilon}f\|C(\omega_\epsilon)^{-\delta(x)}\|_{L^\infty(\Omega)}u_n\,dx\\
&\qquad\leq\|f\|_{L^1(\Omega)}+\Big(1+\|C(\omega_\epsilon)^{-\delta(x)}\|_{L^\infty(\Omega)}\Big)\int_{\Omega}fu_n\,dx,
\end{split}
\end{equation}
where we have used the fact that $0<\delta(x)\leq 1$ for every $x\in\Omega_\epsilon$ and the property $u_n\geq C(\omega_\epsilon)>0$ in $\omega_\epsilon$ from Lemma \ref{approx}. Since $f\in L^m(\Omega)$ for $m=(p_s^{*})'$, using H\"older's inequality and Lemma \ref{emb} in \eqref{api}, we arrive at
\begin{equation*}
\begin{split}
\|u_n\|^p&\leq \|f\|_{L^1(\Omega)}+\big(1+\|C(\omega_\epsilon)^{-\delta(x)}\|_{L^\infty(\Omega)}\big)\|f\|_{L^m(\Omega)}\|u_n\|_{L^{p_s^{*}}(\Omega)}\\
&\qquad\leq \|f\|_{L^1(\Omega)}+C\|f\|_{L^m(\Omega)}\|u_n\|,
\end{split}
\end{equation*}
for some positive constant $C$, independent of $n$. Therefore, the sequence $\{u_n\}_{n\in\mathbb{N}}$ is uniformly bounded in $HW_0^{s,p}(\Omega)$.
\item[$(ii)$] Choosing $u_n^{\delta_*}$ as a test function in the weak formulation of \eqref{approx}, we obtain
\begin{equation}\label{apigrt1}
\begin{split}
&\Big\|u_n^\frac{\delta_{*}+p-1}{p}\Big\|^{p}\leq c\int_{\Omega}f_n\Big(u_n+\frac{1}{n}\Big)^{-\delta(x)}u_n^{\delta_*}\,dx\\
&\qquad\leq c\int_{\overline{\Omega_\epsilon}\cap\{0<u_n\leq 1\}}fu_n^{\delta_{*}-\delta(x)}\,dx+c\int_{\overline{\Omega_\epsilon}\cap\{u_n>1\}}fu_n^{\delta_{*}-\delta(x)}\,dx+
c\int_{\omega_\epsilon}f\|C(\omega_\epsilon)^{-\delta(x)}\|_{L^\infty(\Omega)}u_n^{\delta_*}\,dx\\
&\qquad\leq c\|f\|_{L^1(\Omega)}+c\big(1+\|C(\omega_\epsilon)^{-\delta(x)}\|_{L^\infty(\Omega)}\big)\int_{\Omega}fu_n^{\delta_*}\,dx\\
&\qquad\leq c\|f\|_{L^1(\Omega)}+c
\|f\|_{L^m(\Omega)}\left\|u_n^{\frac{\delta_{*}+p-1}{p}}\right\|^{\frac{p\delta_*}{\delta_*+p-1}},
\end{split}
\end{equation}
for some positive constant $C$, independent of $n$, where we have used Lemma \ref{emb}, the hypothesis $\|\delta\|_{L^\infty(\Omega_\epsilon)}\leq\delta_*$ along with $u_n\geq C(\omega)>0$ from Lemma \ref{approx}. Therefore, $\left\{u_{n}^\frac{\delta_{*}+p-1}{p}\right\}_{n\in\mathbb{N}}$ is uniformly bounded in $HW_0^{s,p}(\Omega)$.
\end{enumerate}
\end{proof}

\begin{Lemma}\label{apun2}(Constant singular exponent)
 Assume that $\delta:\overline{\Om}\to(0,\infty)$ is a constant function. Assume that $\{u_n\}_{n\in\mathbb{N}}$ is the sequence, consisting of solutions of \eqref{approx} given in Lemma \ref{approx}. Let $f\in L^m(\Om)\setminus\{0\}$ be nonnegative for some $m$. Then
 \begin{enumerate}
     \item[$(i)$] the sequence $\{u_n\}_{n\in\mathbb{N}}$ is uniformly bounded in $HW_0^{s,p}(\Om)$ if $0<\delta<1$ and $m=\big(\frac{p_s^{*}}{1-\delta}\big)'$.
     \item[$(ii)$] the sequence $\{u_n\}_{n\in\mathbb{N}}$ is uniformly bounded in $HW_0^{s,p}(\Om)$ if $\delta=1$ and $m=1$.
     \item[$(iii)$] the sequence $\Big\{u_n^\frac{\delta+p-1}{p}\Big\}_{n\in\mathbb{N}}$ is uniformly bounded in $HW_0^{s,p}(\Om)$ if $\delta>1$ and $m=1$.  
 \end{enumerate}
\end{Lemma}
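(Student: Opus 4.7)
The plan is to mimic, in a simplified form, the test-function strategy already used in Lemma~\ref{apun1}, exploiting that here $\delta$ is constant so the splitting between $\Omega_\epsilon$ and $\omega_\epsilon$ (needed to handle variability of $\delta$) is no longer required, and all integrals can be estimated directly.

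For part $(ii)$, the case $\delta=1$, I would test the weak formulation of \eqref{approxeqn} with $\phi=u_n$. Since $u_n/(u_n+1/n)\le 1$ and $f_n\le f$, one immediately gets
\begin{equation*}
\|u_n\|^p \;\le\; \int_\Omega f_n(x)\,\frac{u_n}{u_n+1/n}\,dx \;\le\; \|f\|_{L^1(\Omega)},
\end{equation*}
which yields the uniform bound with no further work.

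For part $(i)$, the case $0<\delta<1$, again I would take $\phi=u_n$. Using $u_n\ge 0$ and $f_n\le f$, and dropping the $1/n$ in the singular factor (which only increases the right-hand side), this gives $\|u_n\|^p \le \int_\Omega f\,u_n^{1-\delta}\,dx$. Then I would apply H\"older's inequality with exponents $m$ and $m'=p_s^*/(1-\delta)$, followed by the Sobolev embedding of Lemma~\ref{emb}, to obtain
\begin{equation*}
\|u_n\|^p \;\le\; \|f\|_{L^m(\Omega)}\,\|u_n\|_{L^{p_s^*}(\Omega)}^{1-\delta} \;\le\; C\,\|f\|_{L^m(\Omega)}\,\|u_n\|^{1-\delta}.
\end{equation*}
Since $1-\delta<1<p$, this self-improving inequality immediately yields uniform boundedness of $\{u_n\}$ in $HW_0^{s,p}(\Omega)$.

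For part $(iii)$, the case $\delta>1$, I would take the test function $\phi=u_n^{\delta}$, which is admissible because $u_n\in HW_0^{s,p}(\Omega)\cap L^\infty(\Omega)$ by Lemma~\ref{approx}. Applying Lemma~\ref{BPalg} with the increasing function $g(t)=t^\delta$ (noting that the associated $G$ is a constant multiple of $t^{(\delta+p-1)/p}$) to the left-hand side of the resulting identity yields
\begin{equation*}
c\,\Big\|u_n^{\frac{\delta+p-1}{p}}\Big\|^p \;\le\; \int_\Omega f_n(x)\Big(u_n+\tfrac{1}{n}\Big)^{-\delta}u_n^{\delta}\,dx \;\le\; \int_\Omega f_n\,dx \;\le\; \|f\|_{L^1(\Omega)},
\end{equation*}
since $u_n^{\delta}/(u_n+1/n)^{\delta}\le 1$. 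This is the desired uniform bound.

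The arguments in $(i)$ and $(ii)$ are routine; the only delicate step is $(iii)$, where the main obstacle is to produce $\|u_n^{(\delta+p-1)/p}\|^p$ on the left-hand side. This is handled cleanly by invoking Lemma~\ref{BPalg} rather than attempting any direct pointwise manipulation of $J_p$. Once that pointwise algebraic inequality is in hand, the rest is the simple observation that the singular factor cancels the test function $u_n^{\delta}$ up to a bounded factor.
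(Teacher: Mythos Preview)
Your proposal is correct and follows essentially the same approach as the paper: test \eqref{approxeqn} with $\phi=u_n$ for parts $(i)$ and $(ii)$, and with $\phi=u_n^{\delta}$ for part $(iii)$, then use H\"older plus Sobolev embedding in $(i)$ and the trivial bound $u_n^{\delta}/(u_n+1/n)^{\delta}\le 1$ in $(ii)$--$(iii)$. Your explicit invocation of Lemma~\ref{BPalg} to produce $\|u_n^{(\delta+p-1)/p}\|^p$ on the left in $(iii)$ is in fact slightly more transparent than the paper, which states that inequality without citing a specific lemma.
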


\begin{proof}
\begin{enumerate}
\item[$(i)$] Taking $u_n$ as a test function in the weak formulation of \eqref{approxeqn}, we obtain
$$
\|u_n\|^p\leq \int_{\Om}\frac{f_n(x)u_n}{\big(u_n+\frac{1}{n}\big)^\delta}\,dx.
$$
Since $f \in L^m(\Omega)$ and $(1 - \delta)m' = p_s^*$, applying Lemma \ref{emb}, we obtain
\begin{align*}
\|u_n\|^p&\leq \int_{\Om}f u_n^{1-\delta}\,dx\leq \|f\|_{L^m(\Om)}\left(\int_{\Om}u_n^{(1-\delta)m'}\right)^\frac{1}{m'}\\
&=\|f\|_{L^m(\Om)}\left(\int_{\Om}u_n^{p_s^*}\,dx\right)^\frac{1-\delta}{p^*}\\
&\leq C\|f\|_{L^m(\Om)}\|u_n\|^{1-\delta},
\end{align*}
for some constant $C>0$, independent of $n$. Therefore, we have
$$
\|u_n\|\leq C,
$$
for some constant $C>0$, independent of $n$.
Hence, the sequence $\{u_n\}_{n\in\mathbb{N}}$ is uniformly bounded in $HW_0^{s,p}(\Om)$.

\item[$(ii)$] By taking $\phi = u_n$ as a test function in the weak formulation of \eqref{approxeqn}, we obtain
\begin{equation}\label{}
\begin{split}
\|u_n\|^p&\leq\int_{\Om}\frac{f_n u_n}{u_n+\frac{1}{n}}\,dx\leq\|f\|_{L^1(\Om)}.
\end{split}
\end{equation}
Hence, the sequence $\{u_n\}_{n \in \mathbb{N}}$ is uniformly bounded in $HW_0^{s,p}(\Omega)$.

\item[$(iii)$] Choosing $\phi=u_n^\delta$ as a test function (which belongs to $HW_0^{s,p}(\Omega)$, since $\Phi(s)=s^\delta$, $s\geq 0$ is Lipschitz on bounded intervals), we obtain
		\begin{equation}
		\label{bound}
			\Big\|u_n^\frac{\delta+p-1}{p}\Big\|^{p}
		\leq C\int_{\Omega}{f_n(x)}{\Big(u_n+\frac{1}{n}\Big)^{-\delta}}u_n^\delta\, dx\leq c\|f\|_{L^1(\Om)},
		\end{equation}
		for some constant $C>0$ independent of $n$. It follows from \eqref{bound} that the sequence $\Big\{u_n^\frac{\delta+p-1}{p}\Big\}_{n \in \mathbb{N}}$ is uniformly bounded in $HW_0^{s,p}(\Omega)$.
\end{enumerate}
\end{proof}

\subsection{Preliminaries for the uniqueness result}
In the following two subsections 4.3-4.4, we assume that $\delta>0$ is a constant function in $\overline{\Om}$ unless otherwise mentioned. We begin by defining the real-valued function $g_k$ as
\begin{equation}\nonumber
g_{k}(s)\,:=\,
\begin{cases}
\min\{s^{-\beta}\,,\,k\}\qquad\text{if $s>0$},\\
k\qquad\qquad\quad\qquad\text{if $s\leq 0$}.
\end{cases}
\end{equation}
Next, we consider the real-valued function $\Phi_k$ defined as the primitive of $g_k$ satisfying $\Phi_k(1) = 0$.

With these definitions, we introduce the functional
$J_k\,:HW^{s,p}_0(\Omega)\rightarrow [-\infty\,,\,+\infty]$ defined by
\begin{equation}\nonumber
J_k(\phi)\,:=\,\frac{1}{p}\int_{\mathbb{H}^{N}}\int_{\mathbb{H}^{N}}{|\phi(x)-\phi(y)|^p}\,d\mu-\int_{\mathbb{H}^{N}} f(x)\Phi_{k}(\phi)\,dx,\qquad\phi\in HW^{s,p}_0(\Omega)\,,
\end{equation}
where $f\in L^1(\Om)\setminus\{0\}$ is nonnegative.
Recall that for a function $z\in HW^{s,p}_{{\rm loc}}(\Omega)\cap L^{p-1}(\Omega)$ with $z\geq 0$, we say $z$ is a weak supersolution (subsolution) to \eqref{meqn} if,
if
\begin{equation*}
\int_{\mathbb{H}^{N}}\int_{\mathbb{H}^{N}}{|z(x) - z(y)|^{p-2}\, (z(x) - z(y))\, (\phi(x) - \phi(y))}\,d\mu
\underset{(\leq)}{\geq}\int_{\Omega}{f(x)}{z^{-\delta}}\,\phi\, dx\qquad \forall \phi\in C^1_c(\Omega)\,,\, \phi\geq 0\,.
\end{equation*}
Given a fixed supersolution $v$, we define $w$ as the minimizer of $J_k$ over the convex set
\[
\mathcal K\,:=\,\{\phi\in HW^{s,p}_0(\Omega)\,:\, 0\leq \phi\leq v\,\,\text{a.e. in}\,\,\Omega\}\,.
\]
By direct computation, we deduce that
\begin{equation}\label{eqminxx}
\begin{split}
&\int_{\mathbb{H}^{N}}\int_{\mathbb{H}^{N}}{|w(x) - w(y)|^{p-2}\, (w(x) - w(y))\, (\psi(x)-w(x)-(\psi(y)-w(y)))}\,d\mu\\
&\geq \int_\Omega\,f(x) \Phi_{k}'(w)(\psi-w)\quad\,\,\, \text{for}\,\,
 \psi\in w+\left(HW^{s,p}_0(\Omega)\cap L^\infty_c(\Omega)\right)\,\,\text{and}\,\,0\leq \psi\leq v\,,
 \end{split}
\end{equation}
where $L^\infty_c(\Omega)$ denotes the space of $L^\infty$ functions with compact support in $\Om$.
With this notation, the following results hold by arguments similar to those in \cite[Lemma 4.1 and Theorem 4.2]{Caninoetal}. For convenience of the reader, we present the proof below.
\begin{Lemma}\label{lemmause}
For all $\phi \in C_c^1(\Omega)$ with $\phi \geq 0$, the function $w$ satisfies
\begin{equation}\label{ksjfskgfdfjigf}
\int_{\mathbb{H}^N} \int_{\mathbb{H}^N} |w(x) - w(y)|^{p-2} (w(x) - w(y)) (\phi(x) - \phi(y)) \, d\mu
\geq \int_\Omega f(x) \, \Phi_k'(w) \, \phi \, dx.
\end{equation}
\end{Lemma}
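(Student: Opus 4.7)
The plan is to extend the one-sided variational inequality \eqref{eqminxx}, which holds only for admissible perturbations $\psi$ satisfying $0 \leq \psi \leq v$, to an inequality against arbitrary nonnegative $\phi \in C_c^1(\Omega)$; the device is to exploit the supersolution property of the obstacle $v$ to absorb the error introduced by the unilateral constraint.

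First, I would fix $\phi \in C_c^1(\Omega)$ with $\phi \geq 0$ and, for small $t > 0$, set the admissible candidate $\psi_t := \min\{w + t\phi,\, v\}$. Since $0 \leq w \leq \psi_t \leq v$, one has $\psi_t \in \mathcal{K}$, while $\psi_t - w = \min\{t\phi, v-w\}$ is compactly supported in $\supp\phi$ and $L^\infty$-bounded by $t\|\phi\|_{L^\infty}$; hence $\psi_t \in w + (HW_0^{s,p}(\Omega) \cap L^\infty_c(\Omega))$. Decomposing $\psi_t - w = t\phi - \eta_t$ with $\eta_t := (w + t\phi - v)^+ \geq 0$ and plugging $\psi_t$ into \eqref{eqminxx} produces, after rearrangement,
\begin{equation*}
t \int_{\mathbb{H}^N} \int_{\mathbb{H}^N} J_p(w(x)-w(y))(\phi(x)-\phi(y))\,d\mu - t\int_\Omega f\,\Phi_k'(w)\,\phi\,dx \geq \mathcal{A}(t) - \mathcal{B}(t),
\end{equation*}
where $\mathcal{A}(t) := \int_{\mathbb{H}^N} \int_{\mathbb{H}^N} J_p(w(x)-w(y))(\eta_t(x)-\eta_t(y))\,d\mu$ and $\mathcal{B}(t) := \int_\Omega f\,\Phi_k'(w)\,\eta_t\,dx$.

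Next, since $v$ is a supersolution to \eqref{meqn}, testing against the nonnegative $\eta_t$ (admissible by a standard density argument starting from $C_c^1$) gives
\begin{equation*}
\int_{\mathbb{H}^N} \int_{\mathbb{H}^N} J_p(v(x)-v(y))(\eta_t(x)-\eta_t(y))\,d\mu \geq \int_\Omega f\,v^{-\delta}\,\eta_t\,dx.
\end{equation*}
Combining with the previous display and dividing by $t$ yields
\begin{equation*}
\int_{\mathbb{H}^N} \int_{\mathbb{H}^N} J_p(w(x)-w(y))(\phi(x)-\phi(y))\,d\mu - \int_\Omega f\,\Phi_k'(w)\,\phi\,dx \geq \frac{\mathcal{D}(t)}{t} + \frac{1}{t}\int_\Omega f\,\bigl(v^{-\delta} - \Phi_k'(w)\bigr)\,\eta_t\,dx,
\end{equation*}
where $\mathcal{D}(t) := \int_{\mathbb{H}^N} \int_{\mathbb{H}^N} \bigl[J_p(w(x)-w(y)) - J_p(v(x)-v(y))\bigr](\eta_t(x)-\eta_t(y))\,d\mu$.

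Finally, I pass to the limit $t \to 0^+$ and argue that the right-hand side is asymptotically nonnegative, which yields \eqref{ksjfskgfdfjigf}. The zero-order term is the easy part: $\supp\eta_t \subset \{w \leq v \leq w + t\phi\}$ forces $v \to w$ on this support, and since $w^{-\delta} \geq \Phi_k'(w) = \min\{w^{-\delta},k\}$ by the very definition of $g_k$, dominated convergence (with $\eta_t/t \leq \phi$) produces a nonnegative limit. The main obstacle is the nonlocal interaction $\mathcal{D}(t)/t$: because $\eta_t/t \to \phi\,\chi_{\{v=w\}}$ pointwise and the coincidence set $\{v=w\}$ may carry positive measure, this term does not vanish automatically. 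Its sign is recovered by exploiting that $v$ and $w$ differ by at most $t\|\phi\|_{L^\infty}$ on $\supp\eta_t$; combining the local H\"older continuity of $J_p$ on bounded sets, the nonlocal monotonicity estimate of Lemma \ref{alg}, and a Fatou/dominated-convergence argument for the measure $d\mu$, one obtains $\liminf_{t \to 0^+} \mathcal{D}(t)/t \geq 0$, completing the proof.
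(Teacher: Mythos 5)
Your overall strategy (perturb $w$ by $\min\{w+t\phi,v\}$ inside \eqref{eqminxx}, invoke the supersolution property of $v$, let $t\to 0^+$) is the same skeleton as the paper's, but the way you handle the nonlocal term is different and is where the argument breaks down. The paper never produces your term $\mathcal{D}(t)$: it first uses the monotonicity of the operator (Lemma \ref{alg}) to pass from the $J_p(w(x)-w(y))$--pairing to the $J_p(\phi_{h,t}(x)-\phi_{h,t}(y))$--pairing, then, on the decomposition of $\mathbb{H}^{2N}$ induced by the coincidence set $S_v=\{\phi_{h,t}=v\}$, uses the \emph{pointwise} monotonicity of $r\mapsto J_p(r-r_0)$ together with $\phi_{h,t}\le v$ (with equality on $S_v$, and with the multiplier $\phi_{h,t}-w-t\phi_h$ vanishing off $S_v$) to dominate $\mathcal{G}$ by $\mathcal{G}_v$, and only then applies the supersolution inequality for $v$. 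All of this happens \emph{before} any limit, so the only passage to the limit concerns bounded zero-order quantities like $f\,|\Phi_k'(\phi_{h,t})-\Phi_k'(w)|\,|\phi_h|$, handled by dominated convergence.

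In your scheme the entire difficulty is concentrated in the claim $\liminf_{t\to 0^+}\mathcal{D}(t)/t\ge 0$, and the tools you list do not prove it. For $t>0$ the integrand
\[
\bigl[J_p(w(x)-w(y))-J_p(v(x)-v(y))\bigr]\,\frac{\eta_t(x)-\eta_t(y)}{t}
\]
is sign-indefinite (it only becomes pointwise nonnegative in the formal limit, where $\eta_t/t\to\phi\,\chi_{\{v=w\}}$), so Fatou's lemma is not applicable without an integrable, $t$-uniform minorant, and none is exhibited. Dominated convergence also fails for lack of a $t$-uniform dominating function: the two natural bounds on $|\eta_t(x)-\eta_t(y)|/t$ are $\phi(x)+\phi(y)$, which is not a difference and hence is not integrable against $d\mu$ near the diagonal when multiplied by $|J_p(w(x)-w(y))-J_p(v(x)-v(y))|$, and $|\phi(x)-\phi(y)|+t^{-1}|(v-w)(x)-(v-w)(y)|$, which blows up as $t\to 0^+$. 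Lemma \ref{alg} pairs the bracket with differences of $w-v$, not with differences of $\eta_t=(w+t\phi-v)^+$, so it does not control $\mathcal{D}(t)$ either; and splitting $\eta_t(x)-\eta_t(y)$ into a part monotone in $(w-v)(x)-(w-v)(y)$ (which is fine) plus a remainder of size $t|\phi(x)-\phi(y)|$ only yields a lower bound by a quantity that does not vanish in the limit. So the key step is asserted rather than proved, and it is precisely the step the paper's coincidence-set/monotonicity argument is designed to avoid. (Two minor further points: the zero-order comparison should be between $v^{-\delta}$ and $\Phi_k'(w+t\phi)$ on $\supp\eta_t$ — your inequality $w^{-\delta}\ge\Phi_k'(w)$ is not the relevant one, and $f\,w^{-\delta}\phi$ is not known to be integrable at this stage; and testing the supersolution inequality with $\eta_t\in HW_0^{s,p}(\Omega)\cap L^\infty_c(\Omega)$ needs the density argument the paper invokes, which you should at least record.)
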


\begin{proof}
Let us consider a real-valued function $g \in C_c^\infty(\mathbb{R})$ with $0 \leq g(t) \leq 1$, $g(t) = 1$ for $t \in [-1,1]$ and $g(t) = 0$ for $t \in (-\infty,-2] \cup [2,\infty)$. Then, for any nonnegative $\phi \in C_c^1(\Omega)$, we set
\[
\phi_h := g\left(\frac{w}{h}\right)\phi \quad \text{and} \quad \phi_{h,t} := \min\{w + t\phi_h, \, v\}
\]
with $h \geq 1$ and $t > 0$.  
We have that $\phi_{h,t} \in w + (HW_0^{s,p}(\Omega) \cap L_c^\infty(\Omega))$ and $0 \leq \phi_{h,t} \leq v$, so that, by \eqref{eqminxx}, we deduce
\[
\begin{split}
&\int_{\mathbb{H}^N} \int_{\mathbb{H}^N} |w(x) - w(y)|^{p-2} (w(x) - w(y)) (\phi_{h,t}(x) - w(x) - (\phi_{h,t}(y) - w(y))) \, d\mu \\
&\geq \int_\Omega f(x) \Phi_k'(w) (\phi_{h,t} - w) \, dx.
\end{split}
\]

By standard manipulations and again using \eqref{eqminxx} along with Lemma \ref{alg}, we obtain
\[
\begin{split}
\mathbb{I}_1 := c \int_{\mathbb{H}^N} \int_{\mathbb{H}^N} &(|\phi_{h,t}(x) - \phi_{h,t}(y)| + |w(x) - w(y)|)^{p-2} \\
&\cdot (\phi_{h,t}(x) - w(x) - (\phi_{h,t}(y) - w(y)))^2 \, d\mu \\
&\leq \int_{\mathbb{H}^N} \int_{\mathbb{H}^N} {|\phi_{h,t}(x) - \phi_{h,t}(y)|^{p-2} (\phi_{h,t}(x) - \phi_{h,t}(y)) (\phi_{h,t}(x) - w(x) - (\phi_{h,t}(y) - w(y)))}\,d\mu \\
&\quad - \int_{\mathbb{H}^N} \int_{\mathbb{H}^N} {|w(x) - w(y)|^{p-2} (w(x) - w(y)) (\phi_{h,t}(x) - w(x) - (\phi_{h,t}(y) - w(y)))}\,d\mu\\
&\leq \int_{\mathbb{H}^N} \int_{\mathbb{H}^N} |\phi_{h,t}(x) - \phi_{h,t}(y)|^{p-2} (\phi_{h,t}(x) - \phi_{h,t}(y)) (\phi_{h,t}(x) - w(x) - (\phi_{h,t}(y) - w(y))) \, d\mu \\
&\quad - \int_\Omega f(x) \Phi_k'(w) (\phi_{h,t} - w) \, dx.
\end{split}
\]
We rewrite this as
\begin{equation}\label{cicciofriccio}
\begin{split}
\mathbb{I}_1 &- \int_\Omega f(x)(\Phi_k'(\phi_{h,t}) - \Phi_k'(w)) (\phi_{h,t} - w) \, dx \\
&\leq \int_{\mathbb{H}^N} \int_{\mathbb{H}^N} |\phi_{h,t}(x) - \phi_{h,t}(y)|^{p-2} (\phi_{h,t}(x) - \phi_{h,t}(y)) (\phi_{h,t}(x) - w(x) - (\phi_{h,t}(y) - w(y))) \, d\mu \\
&\quad - \int_\Omega f(x) \Phi_k'(\phi_{h,t}) (\phi_{h,t} - w) \, dx \\
&= \int_{\mathbb{H}^N} \int_{\mathbb{H}^N} \mathcal{G}(x,y) \, dx \, dy - \int_\Omega f(x) \Phi_k'(\phi_{h,t}) (\phi_{h,t} - w - t\phi_h) \, dx \\
&\quad + t \int_{\mathbb{H}^N} \int_{\mathbb{H}^N} |\phi_{h,t}(x) - \phi_{h,t}(y)|^{p-2} (\phi_{h,t}(x) - \phi_{h,t}(y)) (\phi_h(x) - \phi_h(y)) \, d\mu \\
&\quad - t \int_\Omega f(x) \Phi_k'(\phi_{h,t}) \phi_h \, dx,
\end{split}
\end{equation}
where, if $J_p(t) := |t|^{p-2}t$, we define
\[
\mathcal{G}(x,y) := \frac{J_p(\phi_{h,t}(x) - \phi_{h,t}(y)) \left[ (\phi_{h,t}(x) - w(x) - t\phi_h(x)) - (\phi_{h,t}(y) - w(y) - t\phi_h(y)) \right]}{|y^{-1} \circ x|^{Q + sp}}.
\]

For future use, define also
\[
\mathcal{G}_v(x,y) := \frac{I_p(v(x) - v(y)) \left[ (\phi_{h,t}(x) - w(x) - t\phi_h(x)) - (\phi_{h,t}(y) - w(y) - t\phi_h(y)) \right]}{|y^{-1} \circ x|^{Q + sp}}.
\]

Let $S_v := \{ \phi_{h,t} = v \}$, i.e., $S_v = \{ v \leq w + t\phi_h \}$.  
Decompose: 
\[
\mathbb{H}^{N}\times \mathbb{H}^{N} = (S_v \cup S_v^c) \times (S_v \cup S_v^c).
\]
Using that $\mathcal{G} = 0$ in $S_v^c \times S_v^c$, we deduce:
\[
\begin{split}
\int_{\mathbb{H}^{N}} \int_{\mathbb{H}^{N}} \mathcal{G}(x,y) \, dx \, dy &= \int_{S_v} \int_{S_v} \mathcal{G}(x,y) \, dx \, dy + \int_{S_v^c} \int_{S_v} \mathcal{G}(x,y) \, dx \, dy + \int_{S_v} \int_{S_v^c} \mathcal{G}(x,y) \, dx \, dy \\
&\leq \int_{\mathbb{H}^{N}}\int_{\mathbb{H}^{N}} \mathcal{G}_v(x,y) \, dx \, dy.
\end{split}
\]

This follows from the monotonicity of the map $t \mapsto |t - t_0|^{p-2}(t - t_0)$.

Returning to \eqref{cicciofriccio}, we now get:
\[
\begin{split}
\mathbb{I}_1 &- \int_\Omega f(x)(\Phi_k'(\phi_{h,t}) - \Phi_k'(w))(\phi_{h,t} - w) \, dx \\
&\leq \int_{\mathbb{H}^{N}}\int_{\mathbb{H}^{N}} \mathcal{G}_v(x,y) \, dx \, dy - \int_\Omega f(x) \Phi_k'(\phi_{h,t}) (\phi_{h,t} - w - t\phi_h) \, dx \\
&\quad + t \int_{\mathbb{H}^{N}}\int_{\mathbb{H}^{N}} {|\phi_{h,t}(x) - \phi_{h,t}(y)|^{p-2} (\phi_{h,t}(x) - \phi_{h,t}(y)) (\phi_h(x) - \phi_h(y))}\,d\mu \\
&\quad - t \int_\Omega f(x) \Phi_k'(\phi_{h,t}) \phi_h \, dx.
\end{split}
\]
Using that $\phi_{h,t} - w - t\phi_h \leq 0$ and $v$ is a weak supersolution to $\mathcal{L}_{s,p}z = \Phi_k'(z)$, we deduce:
\[
\begin{split}
\mathbb{I}_1 &- \int_\Omega f(x)(\Phi_k'(\phi_{h,t}) - \Phi_k'(w))(\phi_{h,t} - w) \, dx \\
&\leq t \int_{\mathbb{H}^N} \int_{\mathbb{H}^N} {|\phi_{h,t}(x) - \phi_{h,t}(y)|^{p-2} (\phi_{h,t}(x) - \phi_{h,t}(y)) (\phi_h(x) - \phi_h(y))}\,d\mu \\
&\quad - t \int_\Omega f(x) \Phi_k'(\phi_{h,t}) \phi_h \, dx.
\end{split}
\]

Since $\phi_{h,t} - w \leq t\phi_h$, we conclude:
\[
\begin{split}
&\int_{\mathbb{H}^N} \int_{\mathbb{H}^N} {|\phi_{h,t}(x) - \phi_{h,t}(y)|^{p-2} (\phi_{h,t}(x) - \phi_{h,t}(y)) (\phi_h(x) - \phi_h(y))}\,d\mu \\
&\quad - \int_\Omega f(x) \Phi_k'(\phi_{h,t}) \phi_h\,dx \geq - \int_\Omega f(x) |\Phi_k'(\phi_{h,t}) - \Phi_k'(w)| |\phi_h| \, dx.
\end{split}
\]

Passing to the limit as $t \to 0$ and using the Lebesgue theorem, we obtain:
\[
\int_{\mathbb{H}^N} \int_{\mathbb{H}^N} {|w(x) - w(y)|^{p-2} (w(x) - w(y)) (\phi_h(x) - \phi_h(y))}\,d\mu - \int_\Omega f(x) \Phi_k'(w) \phi_h \geq 0.
\]

The claim, i.e., \eqref{ksjfskgfdfjigf}, follows by letting $h \to \infty$.
\end{proof}
Now we are in a position to prove our \emph{weak comparison principle}, namely we have the following:

\begin{Theorem}\label{comparison}
Let $f \in L^1(\Omega)\setminus\{0\}$ be nonnegative. Let $u$ be a weak subsolution to the problem \eqref{meqn} such that $u \leq 0$ on $\partial\Omega$, and let $v$ be a weak supersolution to \eqref{meqn}. Then, $u \leq v$ almost everywhere in $\Omega$.
\end{Theorem}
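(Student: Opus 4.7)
\textbf{The plan} is to use the obstacle minimizer constructed just above as an intermediate barrier. Fix $k\ge 1$ and let $w=w_k$ be the minimizer of $J_k$ on the convex set $\mathcal K=\{\phi\in HW^{s,p}_0(\Omega):\,0\le\phi\le v\}$. Existence and uniqueness of $w_k$ follow from the direct method, since $J_k$ is coercive, strictly convex, and weakly lower semicontinuous on $HW^{s,p}_0(\Omega)$. By construction $0\le w_k\le v$ a.e., and by Lemma~\ref{lemmause},
\begin{equation*}
\int_{\mathbb{H}^N}\int_{\mathbb{H}^N}J_p(w_k(x)-w_k(y))(\phi(x)-\phi(y))\,d\mu\;\ge\;\int_\Omega f\,g_k(w_k)\,\phi\,dx\qquad\forall\,\phi\in C^1_c(\Omega),\ \phi\ge 0,
\end{equation*}
and a routine density argument extends this inequality to every nonnegative $\phi\in HW^{s,p}_0(\Omega)\cap L^\infty_c(\Omega)$.

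To compare $u$ with $w_k$, I would choose the calibrated level $\theta_k:=k^{-1/\delta}$ and use the test function $\phi_k:=(u-\theta_k-w_k)^+$. Since $u\le 0$ on $\partial\Omega$ in the sense of Definition~\ref{rediri} and $w_k\ge 0$, the pointwise bound $\phi_k\le (u-\theta_k)^+\in HW^{s,p}_0(\Omega)$ together with the corresponding pointwise estimate for the Gagliardo integrand of $\phi_k$ yields $\phi_k\in HW^{s,p}_0(\Omega)$; moreover $\phi_k$ is supported in $\{u>\theta_k\}$, so the singular quantity $f\,u^{-\delta}\phi_k$ is integrable. Testing the subsolution inequality for $u$ and the inequality above for $w_k$ with $\phi_k$ and subtracting yields
\begin{equation*}
\int\int\bigl[J_p(u(x){-}u(y))-J_p(w_k(x){-}w_k(y))\bigr]\bigl(\phi_k(x){-}\phi_k(y)\bigr)\,d\mu\;\le\;\int_\Omega f\bigl[u^{-\delta}-g_k(w_k)\bigr]\phi_k\,dx.
\end{equation*}
The left-hand side is nonnegative by the vector monotonicity of $J_p$ (Lemma~\ref{alg}). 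The choice $\theta_k=k^{-1/\delta}$ is made precisely so that on the support of $\phi_k$ one has $u^{-\delta}\le g_k(w_k)$ in both regimes: on $\{w_k\ge k^{-1/\delta}\}$ one has $g_k(w_k)=w_k^{-\delta}\ge u^{-\delta}$ because $u>w_k$, while on $\{w_k<k^{-1/\delta}\}$ one has $g_k(w_k)=k$ and $u>\theta_k=k^{-1/\delta}$ gives $u^{-\delta}<k$. Hence the right-hand side is $\le 0$, forcing $\phi_k=0$ and thus $u\le\theta_k+w_k$ a.e. Letting $k\to\infty$, so that $\theta_k\to 0$ and $w_k\uparrow v$ a.e.\ (by the monotonicity of the obstacle problem in the upper constraint combined with $v$ being itself admissible as a super-barrier), we conclude $u\le v$ a.e.\ in $\Omega$.

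\textbf{The main obstacle} is the dual control of the singular term $f\,u^{-\delta}$: the calibration $\theta_k=k^{-1/\delta}$ must be performed simultaneously with the truncation level $k$ of $g_k$ in order to force the bad set $\{w_k<k^{-1/\delta}\}\cap\{u>w_k\}$ to be confined to a region where $u$ is uniformly away from zero. This must be coupled with a convergence argument for $w_k\to v$, which relies on uniform energy bounds $\|w_k\|\le\|v\|$ inherited from the obstacle constraint and on the monotonicity of the minimizers in the parameter $k$. A secondary, genuinely non-Euclidean technicality is to certify that the nonlinear truncations $\phi_k$ remain admissible in $HW^{s,p}_0(\Omega)$ on the Heisenberg group; this is handled via pointwise bounds on the Gagliardo integrand under the left-invariant pseudometric $|y^{-1}\circ x|$ and the equivalence of homogeneous norms recalled in Section~2.
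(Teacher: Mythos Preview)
Your approach is essentially the same as the paper's: both use the obstacle minimizer $w=w_k$ from Lemma~\ref{lemmause} as an intermediate barrier, test the subsolution and the obstacle inequality with (a variant of) $(u-\varepsilon-w_k)^+$ for $\varepsilon$ calibrated against $k$ via $\varepsilon^{-\delta}<k$, and conclude $u\le w_k+\varepsilon\le v+\varepsilon$.

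Two points are worth flagging. First, your final step invokes $w_k\uparrow v$, which is in general \emph{false} (if $v$ is a strict supersolution the obstacle may never be active, and $w_k$ converges instead to the solution of \eqref{meqn}); fortunately you do not need it, since $w_k\le v$ by construction already gives $u\le\theta_k+w_k\le\theta_k+v$ and letting $k\to\infty$ suffices. The paper does exactly this. Second, the admissibility of $\phi_k=(u-\theta_k-w_k)^+$ as a test function for the subsolution $u\in HW^{s,p}_{\rm loc}(\Omega)$ is more delicate than you indicate: $\phi_k$ need not have compact support nor be bounded, and the nonnegativity of the left-hand side is not literally Lemma~\ref{alg} but the Lindgren--Lindqvist pointwise inequality (as in the monotonicity step of Lemma~\ref{approx}). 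The paper resolves this by inserting a further truncation $T_\tau$ and a compactly supported approximation $\min\{(u-w-\varepsilon)^+,\phi_n^+\}$ before passing to the limit; your ``routine density argument'' would have to encode precisely this.
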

\begin{proof}
For $\varepsilon > 0$ and $w$ as in Lemma \ref{lemmause}, it follows that
\[
(u - w - \varepsilon)^+ \in HW^{s,p}_0(\Omega)\,.
\]
This is easily deduced by the fact that $w \in HW^{s,p}_0(\Omega)$ and $w \geq 0$ a.e. in $\Omega$, so that the support of $(u - w - \varepsilon)^+$ is contained in the support of $(u - \varepsilon)^+$.  
Therefore, by \eqref{ksjfskgfdfjigf} and standard density arguments, it follows:
\begin{equation}\label{eq111}
\begin{split}
&\int_{\mathbb{H}^N} \int_{\mathbb{H}^N} |w(x) - w(y)|^{p-2} (w(x) - w(y)) \\
&\qquad \cdot \left( T_\tau\left((u - w - \varepsilon)^+\right)(x) - T_\tau\left((u - w - \varepsilon)^+\right)(y) \right) \, d\mu \\
&\geq \int_\Omega f(x) \cdot \Phi_k'(w) T_\tau\left((u - w - \varepsilon)^+\right)\, dx
\end{split}
\end{equation}
for $T_\tau(s) := \min\{s, \tau\}$ for $s \geq 0$, and $T_\tau(-s) := -T_\tau(s)$ for $s < 0$.

Let now $\phi_n \in C_c^1(\Omega)$ such that $\phi_n \to (u - w - \varepsilon)^+$ in $HW^{s,p}_0(\Omega)$, and set
\[
\tilde\phi_{\tau,n} := T_\tau\left( \min\left\{(u - w - \varepsilon)^+, \phi_n^+ \right\} \right)\,.
\]
It follows that $\tilde\phi_{\tau,n} \in HW^{s,p}_0(\Omega) \cap L_c^\infty(\Omega)$, so that, by a density argument,
\begin{equation}\nonumber
\int_{\mathbb{H}^N} \int_{\mathbb{H}^N} |u(x) - u(y)|^{p-2} (u(x) - u(y)) (\tilde\phi_{\tau,n}(x) - \tilde\phi_{\tau,n}(y)) \, d\mu
\leq \int_\Omega {f(x)}{u^{-\delta}} \tilde\phi_{\tau,n} \, dx\,.
\end{equation}
Passing to the limit as $n \to \infty$, we easily deduce that
\begin{equation}\label{eq222}
\begin{split}
&\int_{\mathbb{H}^N} \int_{\mathbb{H}^N} |u(x) - u(y)|^{p-2} (u(x) - u(y)) \\
&\qquad \cdot \left( T_\tau\left((u - w - \varepsilon)^+(x)\right) - T_\tau\left((u - w - \varepsilon)^+(y)\right) \right) \, d\mu \\
&\leq \int_\Omega \frac{f(x)}{u^\delta} T_\tau\left((u - w - \varepsilon)^+\right)\, dx\,.
\end{split}
\end{equation}

Now set
\[
g(t) := T_\tau((t - \varepsilon)^+) = \min\left\{ \tau, \max\{t - \varepsilon, 0\} \right\}.
\]

With this notation, we have
\begin{equation}\nonumber
\begin{split}
&|u(x) - u(y)|^{p-2} (u(x) - u(y)) \left( T_\tau\left((u - w - \varepsilon)^+(x)\right) - T_\tau\left((u - w - \varepsilon)^+(y)\right) \right) \\
&= |u(x) - u(y)|^{p-2} (u(x) - u(y))(u(x) - w(x) - (u(y) - w(y))) H(x,y)
\end{split}
\end{equation}
with
\[
H(x,y) := \frac{g(u(x) - w(x)) - g(u(y) - w(y))}{(u(x) - w(x)) - (u(y) - w(y))}\,.
\]

Similarly, we obtain:
\begin{equation}\nonumber
\begin{split}
&|w(x) - w(y)|^{p-2} (w(x) - w(y)) \left( T_\tau\left((u - w - \varepsilon)^+(x)\right) - T_\tau\left((u - w - \varepsilon)^+(y)\right) \right) \\
&= |w(x) - w(y)|^{p-2} (w(x) - w(y))(u(x) - w(x) - (u(y) - w(y))) H(x,y)\,.
\end{split}
\end{equation}

Now, subtract \eqref{eq111} from \eqref{eq222}, choosing $\varepsilon > 0$ such that $\varepsilon^{-\beta} < k$, and deduce:
\begin{equation}\nonumber
\begin{split}
&c \int_{\mathbb{H}^N} \int_{\mathbb{H}^N} { (|u(x) - u(y)| + |w(x) - w(y)|)^{p-2} (u(x) - w(x) - (u(y) - w(y)))^2 }\,d\mu\\
&\leq \int_\Omega f(x) \cdot \left({u^{-\delta}} - \Phi_k'(w) \right) T_\tau\left((u - w - \varepsilon)^+\right)\, dx \\
&\leq \int_\Omega f(x) \cdot \left( \Phi_k'(u) - \Phi_k'(w) \right) T_\tau\left((u - w - \varepsilon)^+\right)\, dx \leq 0\,.
\end{split}
\end{equation}

Here we used standard elliptic estimates and the fact that $H(x,y) \geq 0$ due to $g$ being nondecreasing. Furthermore, we have
\begin{equation}\nonumber
\begin{split}
\int_{\mathbb{H}^N} \int_{\mathbb{H}^N} (|u(x) - u(y)| + |w(x) - w(y)|)^{p-2} \left( T_\tau\left((u - w - \varepsilon)^+(x)\right) - T_\tau\left((u - w - \varepsilon)^+(y)\right) \right)^2 \, d\mu \leq 0
\end{split}
\end{equation}

which proves that
\[
u \leq w + \varepsilon \leq v + \varepsilon \qquad \text{a.e. in } \Omega,
\]
and the result follows by letting $\varepsilon \to 0$.
\end{proof}

\subsection{Preliminaries for the Sobolev type inequality with extremal}
Throughout this subsection, we assume that $f \in L^m(\Omega) \setminus \{0\}$, where $m = \left( \frac{p_s^*}{1 - \delta} \right)'$, unless stated otherwise. Let $u_n \in HW_0^{s,p}(\Omega)$ be the solution to problem \eqref{approxeqn} as provided by Lemma \ref{approx}, and denote by $u_\delta$ the pointwise limit of $u_n$ in $\Omega$. By Theorem \ref{thm1}-$(a)$, we have that $u_\delta \in HW_0^{s,p}(\Omega)$ is the weak solution to problem \eqref{meqn}.

Moreover, by Remark \ref{rmkapprox}, it holds that $u_n \leq u_\delta$ in $\Omega$ for all $n \in \mathbb{N}$. 

In what follows, we establish several auxiliary results that will be useful in proving Theorem \ref{thm5}.

First, we establish the following result, which justifies the use of test functions from the space $HW_0^{s,p}(\Omega)$ in the weak formulation of equation \eqref{meqn}.

\begin{Lemma}\label{testfn}
Let $\delta > 0$, let $f \in L^1(\Omega) \setminus \{0\}$ be nonnegative, and let $u \in HW_0^{s,p}(\Omega)$ be a weak solution of problem \eqref{meqn}. Then, equation \eqref{wksoleqn} holds for every $\phi \in HW_0^{s,p}(\Omega)$.
\end{Lemma}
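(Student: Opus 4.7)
The plan is a two-step density argument: approximate $\phi \in HW_0^{s,p}(\Omega)$ first by bounded, compactly supported elements, and then by $C_c^1(\Omega)$ test functions. Writing $\phi = \phi^+ - \phi^-$, both of which lie in $HW_0^{s,p}(\Omega)$, by linearity of \eqref{wksoleqn} it suffices to prove the identity for nonnegative $\phi$.

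For the first step, fix a nonnegative $\psi \in HW_0^{s,p}(\Omega) \cap L^\infty(\Omega)$ with $\supp(\psi) \Subset \Omega$. Mollification (combined with a fixed cutoff keeping supports inside a common compact set $K' \Subset \Omega$) yields a sequence $\psi_\varepsilon \in C_c^\infty(\Omega)$ with $\psi_\varepsilon \to \psi$ in $HW_0^{s,p}(\Omega)$, pointwise almost everywhere, and uniformly bounded in $L^\infty$. Since each $\psi_\varepsilon \in C_c^1(\Omega)$, the weak formulation \eqref{wksoleqn} holds with $\phi = \psi_\varepsilon$; passing to the limit, the left-hand side converges exactly as in the demicontinuity argument of Lemma \ref{approx} (using that $u \in HW_0^{s,p}(\Omega)$ places the kernel $J_p(u(x)-u(y))/|y^{-1}\circ x|^{(Q+sp)/p'}$ in $L^{p'}(\mathbb{H}^{2N})$ and that the Gagliardo quotients of $\psi_\varepsilon$ converge strongly in $L^p(\mathbb{H}^{2N})$), while the right-hand side converges by dominated convergence because Definition \ref{wksoldef} provides $u \geq C(K') > 0$ on $K'$, yielding the integrable majorant $C(K')^{-\delta}\sup_\varepsilon \|\psi_\varepsilon\|_{L^\infty(\Omega)}\cdot f$ for $fu^{-\delta}\psi_\varepsilon$. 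Hence \eqref{wksoleqn} holds for every such $\psi$.

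For the second step, given nonnegative $\phi \in HW_0^{s,p}(\Omega)$, choose cutoffs $\eta_k \in C_c^\infty(\Omega)$ with $0 \leq \eta_k \leq 1$ and $\eta_k \nearrow 1$ on $\Omega$, and set $\phi_k := \min(\phi,k)\eta_k$. Each $\phi_k$ lies in the class treated in step one, and $\phi_k \nearrow \phi$ pointwise. A dominated convergence argument on the Gagliardo double integral, splitting the increment $\phi_k(x)-\phi_k(y)$ via the triangle inequality and using $\eta_k \leq 1$, yields $\phi_k \to \phi$ in $HW_0^{s,p}(\Omega)$, so the left-hand side of \eqref{wksoleqn} for $\phi_k$ converges to that for $\phi$. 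On the right-hand side, $\{fu^{-\delta}\phi_k\}$ is a nonnegative sequence increasing pointwise to $fu^{-\delta}\phi$, so monotone convergence gives $\int_\Omega fu^{-\delta}\phi_k\,dx \nearrow \int_\Omega fu^{-\delta}\phi\,dx$; the finite limit of the left-hand side forces this integral to be finite, which simultaneously proves $fu^{-\delta}\phi \in L^1(\Omega)$ and the identity \eqref{wksoleqn} for $\phi$. The main obstacle is the $HW_0^{s,p}$-convergence $\phi_k \to \phi$: the Gagliardo seminorm does not transparently respect truncation and multiplication by cutoffs, so this step requires a careful dominated convergence argument on the double integral, dominating the integrand by the Gagliardo kernel associated with $\phi$ itself.
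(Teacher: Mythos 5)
Your overall strategy (reduce to $\phi\ge 0$, approximate by bounded, compactly supported functions so that the uniform positivity of $u$ on compact subsets controls the singular term, then reach general $\phi$ by truncation and cutoffs) is coherent, but it has a genuine gap at the decisive step: the claim that $\phi_k=\min\{\phi,k\}\,\eta_k\to\phi$ in $HW_0^{s,p}(\Omega)$ ``by dominated convergence, dominating the integrand by the Gagliardo kernel of $\phi$ itself.'' Writing $T_k\phi:=\min\{\phi,k\}$ and splitting the increment,
\[
\phi_k(x)-\phi_k(y)=\eta_k(x)\bigl(T_k\phi(x)-T_k\phi(y)\bigr)+T_k\phi(y)\bigl(\eta_k(x)-\eta_k(y)\bigr),
\]
the first term is indeed dominated by $|\phi(x)-\phi(y)|$ (truncation is $1$-Lipschitz), but the second term is not dominated by any $k$-independent multiple of the Gagliardo kernel of $\phi$: its contribution to the double integral is of fractional Hardy type, of order $\int_\Omega \phi(y)^p\,\mathrm{dist}(y,\partial\Omega)^{-sp}\,dy$ after integrating in $x$, since the Lipschitz constants of the cutoffs $\eta_k$ necessarily blow up as their transition layers approach $\partial\Omega$. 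Controlling this term is essentially equivalent to the density of compactly supported functions in $HW_0^{s,p}(\Omega)$ (or to a fractional Hardy inequality on $\Omega$), a nontrivial fact that needs boundary regularity and a genuine proof; it cannot be dispatched by dominated convergence. And if you instead invoke density of $C_c^1(\Omega)$ in $HW_0^{s,p}(\Omega)$ and take an arbitrary approximating sequence, you lose the common compact support and the monotonicity on which your right-hand-side (monotone convergence) argument relies, so the proof does not close as written.

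The paper circumvents exactly this difficulty by a different device: testing the equation with smooth approximations of $|\psi|$ and using Fatou's lemma together with H\"older's inequality, it first establishes the a priori bound $\int_\Omega f\,u^{-\delta}|\psi|\,dx\le C\|u\|^{p-1}\|\psi\|$ for every $\psi\in HW_0^{s,p}(\Omega)$ (estimate \eqref{unique}). This shows the singular right-hand side acts as a bounded linear functional on $HW_0^{s,p}(\Omega)$, so one can pass to the limit on both sides along any sequence $\phi_n\in C_c^1(\Omega)$ with $\phi_n\to\phi$ in norm, with no need for sign restrictions, compact supports, or monotone approximations. To repair your argument you would either need to prove the cutoff convergence $\eta_k\phi\to\phi$ via a fractional Hardy inequality or a density theorem for the smooth domain $\Omega$, or adopt the paper's functional estimate, which is shorter and bypasses the issue entirely.
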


\begin{proof}
Assume that $u \in HW_0^{s,p}(\Omega)$ is a solution to problem \eqref{meqn}. Then, for every $\phi \in C_c^1(\Omega)$, we have
\begin{equation}\label{smthtest}
\int\limits_{\mathbb{H}^N}\int\limits_{\mathbb{H}^N}J_p(u(x)-u(y)){\big(\phi(x)-\phi(y)\big)}\,d\mu=\int_{\Omega}{f(x)}{u(x)^{-\delta}}\phi(x)\,dx.
\end{equation}
By the density of $C_c^1(\Omega)$ in $HW_0^{s,p}(\Omega)$, for every $\psi \in HW_0^{s,p}(\Omega)$, there exists a sequence of nonnegative functions $\psi_n \in C_c^1(\Omega)$ such that $\psi_n \to |\psi|$ strongly in $HW_0^{s,p}(\Omega)$ as $n \to \infty$, and $\psi_n \to |\psi|$ pointwise almost everywhere in $\Omega$. We note that
\begin{multline}
\label{unique}
\Big|\int_{\Omega}{f(x)}{u(x)^{-\delta}}\psi\,dx\Big|\leq \int_{\Omega}{f(x)}{u(x)^{-\delta}}|\psi|\,dx\leq\liminf_{n\to\infty}\int_{\Omega}{f(x)}{u(x)^{-\delta}}\psi_n\,dx
=\liminf_{n\to\infty}\langle \mathcal{L}_{s,p}u,\psi_n \rangle\\
=\int\limits_{\mathbb{H}^N}\int\limits_{\mathbb{H}^N}J_p(u(x)-u(y)){\big(\psi_n(x)-\psi_n(y)\big)}\,d\mu\\
\leq C\|u\|^{p-1}\lim_{n\to\infty}\|\psi_n\|\leq C\|u\|^{p-1}\||\psi|\|
\leq C\|u\|^{p-1}\|\psi\|,
\end{multline}
for some positive constant $C$ independent of $n$. Let $\phi \in HW_0^{s,p}(\Omega)$. Then, there exists a sequence $\phi_n \in C_c^1(\Omega)$ such that $\phi_n \to \phi$ strongly in $HW_0^{s,p}(\Omega)$. Substituting $\psi = \phi_n - \phi$ into equation \eqref{unique}, we obtain  
\begin{equation}\label{lhslim}
\lim_{n\to\infty}\Bigg|\int_{\Omega}{f(x)}{u(x)^{-\delta}}(\phi_n-\phi)\,dx\Bigg|\leq C\|u\|^{p-1}\lim_{n\to\infty}\|\phi_n-\phi\|=0.
\end{equation}
Since $\phi_n \to \phi$ strongly in $HW_0^{s,p}(\Omega)$ as $n \to \infty$, it follows that
\begin{equation}\label{rhslim}
\lim_{n \to \infty} \left\{ \int\limits_{\mathbb{H}^N} \int\limits_{\mathbb{H}^N} J_p(u(x) - u(y)) \big( (\phi_n - \phi)(x) - (\phi_n - \phi)(y) \big)\, d\mu \right\} = 0.
\end{equation}
Therefore, combining \eqref{lhslim} and \eqref{rhslim} in \eqref{smthtest}, the desired result follows.
\end{proof}

As a consequence of Lemma \ref{testfn}, we can now present a straightforward proof of the uniqueness result.

\begin{Corollary}\label{tstcor}
Let $\delta > 0$ and let $f \in L^1(\Omega) \setminus \{0\}$ be nonnegative. Then, problem \eqref{meqn} admits at most one weak solution in $HW_0^{s,p}(\Omega)$.
\end{Corollary}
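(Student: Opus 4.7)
The plan is to exploit Lemma \ref{testfn}, which elevates the admissible class of test functions from $C_c^1(\Omega)$ to the full energy space $HW_0^{s,p}(\Omega)$. Once we have this at our disposal, the argument is the standard monotonicity-versus-monotonicity comparison: the nonlocal operator is monotone in the sense of Lemma \ref{alg}, while the singular nonlinearity $t\mapsto t^{-\delta}$ is strictly decreasing. These two facts have opposite signs when tested against the difference of two solutions, forcing both to vanish.

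Concretely, I would let $u_1,u_2\in HW_0^{s,p}(\Omega)$ be two weak solutions of \eqref{meqn}. Since both belong to $HW_0^{s,p}(\Omega)$, so does $\phi:=u_1-u_2$. By Definition \ref{wksoldef}, for every $\omega\Subset\Omega$ there exist constants $C_i(\omega)>0$ with $u_i\geq C_i(\omega)$ in $\omega$, so the terms $u_i^{-\delta}$ are locally bounded in $\Omega$; moreover Lemma \ref{testfn} guarantees that $f\,u_i^{-\delta}\phi\in L^1(\Omega)$ and that
\[
\int_{\mathbb{H}^N}\!\!\int_{\mathbb{H}^N} J_p(u_i(x)-u_i(y))(\phi(x)-\phi(y))\,d\mu=\int_\Omega f(x)\,u_i^{-\delta}\,\phi\,dx,\qquad i=1,2.
\]

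Subtracting the two identities gives
\[
\int_{\mathbb{H}^N}\!\!\int_{\mathbb{H}^N}\bigl[J_p(u_1(x)-u_1(y))-J_p(u_2(x)-u_2(y))\bigr]\bigl(\phi(x)-\phi(y)\bigr)\,d\mu=\int_\Omega f(x)\bigl(u_1^{-\delta}-u_2^{-\delta}\bigr)(u_1-u_2)\,dx.
\]
The left-hand side is nonnegative by the algebraic inequality in Lemma \ref{alg} applied pointwise in $(x,y)$. The right-hand side is nonpositive because $t\mapsto t^{-\delta}$ is strictly decreasing on $(0,\infty)$, so $\bigl(u_1(x)^{-\delta}-u_2(x)^{-\delta}\bigr)(u_1(x)-u_2(x))\leq 0$ a.e. in $\Omega$, and $f\geq 0$. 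Hence both sides vanish.

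From the vanishing of the left-hand side together with Lemma \ref{alg}, I infer that
\[
\bigl(u_1(x)-u_2(x)\bigr)-\bigl(u_1(y)-u_2(y)\bigr)=0\qquad\text{for a.e.\ }(x,y)\in\mathbb{H}^N\times\mathbb{H}^N,
\]
so that $u_1-u_2$ is constant a.e.\ on $\mathbb{H}^N$. Since $u_1=u_2=0$ on $\mathbb{H}^N\setminus\Omega$, the constant must be zero and thus $u_1=u_2$ a.e.\ in $\Omega$. No real obstacle arises here: the only subtlety is the integrability of $f u_i^{-\delta}(u_1-u_2)$, which is precisely what Lemma \ref{testfn} was established to deliver, so the rest of the argument is purely algebraic.
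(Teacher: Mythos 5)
Your proposal is correct and follows essentially the same route as the paper: both rest on Lemma \ref{testfn} to admit the difference of the two solutions as a test function, then subtract the two weak formulations and play the monotonicity of $J_p$ (Lemma \ref{alg}) against the decreasing nonlinearity $t\mapsto t^{-\delta}$ to force both sides to vanish. The only cosmetic difference is that you test with the full difference $u_1-u_2$ and conclude directly from strict monotonicity that $u_1-u_2$ is constant (hence zero, since it vanishes outside $\Omega$), whereas the paper tests with $(u_1-u_2)^+$, invokes the argument used for the monotonicity of $u_n$ in Lemma \ref{approx}, and then reverses the roles of $u_1$ and $u_2$.
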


\begin{proof}
Suppose, by contradiction, that $u_1, u_2 \in HW_0^{s,p}(\Omega)$ are two weak solutions of problem \eqref{meqn}. Then, by Lemma \ref{testfn}, for every $\phi \in HW_0^{s,p}(\Omega)$, we have
\begin{equation}\label{tstfnap1}
\int\limits_{\mathbb{H}^N} \int\limits_{\mathbb{H}^N} J_p(u_1(x) - u_1(y)) \big( \phi(x) - \phi(y) \big) \, d\mu = \int_{\Omega}{f(x)}{u_1(x)^{-\delta}} \phi(x) \, dx,
\end{equation}
and
\begin{equation}\label{tstfnap2}
\int\limits_{\mathbb{H}^N} \int\limits_{\mathbb{H}^N} J_p(u_2(x) - u_2(y)) \big( \phi(x) - \phi(y) \big) \, d\mu = \int_{\Omega}{f(x)}{u_2(x)^{-\delta}} \phi(x) \, dx.
\end{equation}

Now, take $\phi = (u_1 - u_2)^+ \in HW_0^{s,p}(\Omega)$ as a test function in both \eqref{tstfnap1} and \eqref{tstfnap2}, and subtract the resulting identities. This gives
\begin{equation}\label{tstfnap}
\begin{split}
& \int\limits_{\mathbb{H}^N} \int\limits_{\mathbb{H}^N} \left\{ J_p(u_1(x) - u_1(y)) - J_p(u_2(x) - u_2(y)) \right\} \\
& \qquad \times \big( (u_1 - u_2)^+(x) - (u_1 - u_2)^+(y) \big) \, d\mu \\
& = \int_{\Omega} f(x) \left\{ {u_1(x)^{-\delta}} - {u_2(x)^{-\delta}} \right\} (u_1 - u_2)^+(x) \, dx \leq 0.
\end{split}
\end{equation}
Now, arguing exactly as in the proof of monotonicity of $u_n$ in Lemma \ref{approx} above, we conclude that $u_2 \geq u_1$ in $\Omega$. Reversing the roles of $u_1$ and $u_2$, we similarly obtain $u_1 \geq u_2$ in $\Omega$. Hence, $u_1 = u_2$ in $\Omega$, which completes the proof.
\end{proof}

\begin{Remark}\label{Rmkuni}
Corollary \ref{tstcor} provides an alternative approach to establish the uniqueness of solutions in $HW_0^{s,p}(\Omega)$, as compared to Theorem \ref{thm4}.
\end{Remark}

\begin{Lemma}\label{lemma1}
Let $n \in \mathbb{N}$. Then, for every $\phi \in HW_0^{s,p}(\Omega)$, the following inequality holds:
\begin{equation}\label{prop1}
\|u_n\|^{p} \leq \|\phi\|^{p} + p \int_{\Omega} {(u_n - \phi)}{\left(u_n + \frac{1}{n}\right)^{-\delta}} f_n \, dx.
\end{equation}
Moreover, the sequence $\{\|u_n\|\}_{n\in\mathbb{N}}$ is nondecreasing, i.e.,
\begin{equation}\label{nmon}
\|u_n\| \leq \|u_{n+1}\| \quad \text{for every } n \in \mathbb{N}.
\end{equation}
\end{Lemma}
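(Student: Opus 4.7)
The plan is to derive \eqref{prop1} from two standard ingredients: the pointwise convexity inequality for $t\mapsto |t|^{p}/p$, and the weak formulation of the approximate problem \eqref{approxeqn} tested against an element of $HW_0^{s,p}(\Omega)$. Monotonicity will then be an almost immediate consequence of \eqref{prop1}.

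First I would recall that, since $p>1$, the map $t\mapsto |t|^p/p$ is convex on $\mathbb{R}$ with derivative $J_p(t)$. Convexity yields the pointwise inequality
\[
|a|^p\leq |b|^p+p\,J_p(a)(a-b)\qquad\forall a,b\in\mathbb{R}.
\]
Applying this with $a=u_n(x)-u_n(y)$ and $b=\phi(x)-\phi(y)$, dividing by $|y^{-1}\circ x|^{Q+sp}$, and integrating over $\mathbb{H}^N\times\mathbb{H}^N$ (the integrability of the cross term follows from H\"older's inequality, using $J_p(u_n(x)-u_n(y))|y^{-1}\circ x|^{-(Q+sp)/p'}\in L^{p'}$ together with $u_n-\phi\in HW_0^{s,p}(\Omega)$) produces
\[
\|u_n\|^p\leq \|\phi\|^p+p\int_{\mathbb{H}^N}\int_{\mathbb{H}^N}J_p(u_n(x)-u_n(y))\bigl[(u_n-\phi)(x)-(u_n-\phi)(y)\bigr]\,d\mu.
\]

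Next, since $u_n,\phi\in HW_0^{s,p}(\Omega)$, the difference $u_n-\phi$ lies in $HW_0^{s,p}(\Omega)$ and is therefore an admissible test function in the weak formulation \eqref{auxeqn} established in Lemma \ref{approx} (recall that $u_n\geq 0$, so $u_n^+=u_n$). Testing with $u_n-\phi$ gives
\[
\int_{\mathbb{H}^N}\int_{\mathbb{H}^N}J_p(u_n(x)-u_n(y))\bigl[(u_n-\phi)(x)-(u_n-\phi)(y)\bigr]\,d\mu=\int_\Omega f_n\Bigl(u_n+\tfrac{1}{n}\Bigr)^{-\delta}(u_n-\phi)\,dx,
\]
and substituting this identity into the convexity bound yields \eqref{prop1}.

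For the monotonicity \eqref{nmon} I would simply plug $\phi=u_{n+1}\in HW_0^{s,p}(\Omega)$ into \eqref{prop1}. Since $u_{n+1}\geq u_n$ in $\Omega$ by Lemma \ref{approx} and $f_n(u_n+1/n)^{-\delta}\geq 0$, the integral term on the right is nonpositive, so $\|u_n\|^p\leq \|u_{n+1}\|^p$, hence $\|u_n\|\leq \|u_{n+1}\|$. The argument has no serious obstacle; the only points requiring care are verifying the integrability needed to split the convexity inequality into the two displayed pieces (handled by H\"older) and checking that $u_n-\phi$ is an admissible test function in \eqref{auxeqn}, which is immediate because $HW_0^{s,p}(\Omega)$ is a vector space.
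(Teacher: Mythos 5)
Your proof is correct, but it reaches \eqref{prop1} by a different route than the paper. The paper invokes Lemma \ref{auxresult} to produce the solution $v$ of the auxiliary non-singular problem with datum $g=f_n\bigl(h^{+}+\tfrac1n\bigr)^{-\delta}$, asserts that $v$ minimizes the associated energy functional $J(\phi)=\tfrac1p\|\phi\|^p-\int_\Omega f_n\bigl(h^{+}+\tfrac1n\bigr)^{-\delta}\phi\,dx$, and obtains \eqref{prop1} from the minimality inequality $J(v)\leq J(\phi)$ with the choice $v=h=u_n$. You instead prove the same inequality directly: the pointwise convexity bound $|a|^p\leq|b|^p+pJ_p(a)(a-b)$ applied to the difference quotients, combined with testing the Euler--Lagrange identity \eqref{auxeqn} with $u_n-\phi$ (legitimate, since \eqref{auxeqn} holds for all test functions in $HW_0^{s,p}(\Omega)$ and $u_n^{+}=u_n$ by nonnegativity). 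The two arguments encode the same fact — a critical point of a convex functional is a global minimizer — but yours is the more self-contained: it never needs the minimization claim about the auxiliary problem, which Lemma \ref{auxresult} as stated does not actually provide and which the paper leaves implicit, while the paper's version is shorter once that variational characterization is granted and reuses the auxiliary lemma. Your care about splitting the integrated convexity inequality (finiteness of $\|u_n\|$, $\|\phi\|$ and H\"older for the cross term) is exactly the right bookkeeping. The monotonicity step, choosing $\phi=u_{n+1}$ and using $u_n\leq u_{n+1}$ together with the sign of the integrand, coincides with the paper's argument.
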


\begin{proof}
Let $h \in HW_0^{s,p}(\Omega)$. By Lemma \ref{auxresult}, there exists a unique solution $v \in HW_0^{s,p}(\Omega)$ to the problem
\[
\mathcal{L}_{s,p} v = {f_n(x)}{\Big(h^{+} + \frac{1}{n}\Big)^{-\delta}}, \quad v > 0 \text{ in } \Omega, \quad v = 0 \text{ in } \mathbb{H}^N \setminus \Omega.
\]

Furthermore, $v$ minimizes the functional $J: HW_0^{s,p}(\Omega) \to \mathbb{R}$ defined by
\[
J(\phi) := \frac{1}{p} \|\phi\|^{p} - \int_{\Omega} {f_n}{\Big(h^{+} + \frac{1}{n}\Big)^{-\delta}} \phi \, dx.
\]

Hence, for every $\phi \in HW_0^{s,p}(\Omega)$, we have $J(v) \leq J(\phi)$, which yields
\begin{equation}\label{mineqn}
\frac{1}{p} \|v\|^{p} - \int_{\Omega} \frac{f_n}{(h^{+} + \frac{1}{n})^\delta} v \, dx \leq \frac{1}{p} \|\phi\|^{p} - \int_{\Omega} {f_n}{\Big(h^{+} + \frac{1}{n}\Big)^{-\delta}} \phi \, dx.
\end{equation}

The inequality \eqref{prop1} follows by taking $v = h = u_n$ in \eqref{mineqn}. Next, choosing $\phi = u_{n+1}$ in \eqref{prop1} and using the monotonicity property $u_n \leq u_{n+1}$ from Lemma \ref{approx}, we conclude that $\|u_n\| \leq \|u_{n+1}\|$.
\end{proof}

\begin{Lemma}\label{strong}
Up to a subsequence, the sequence $\{u_n\}_{n\in\mathbb{N}}$ converges strongly to $u_\delta$ in $HW_0^{s,p}(\Omega)$.
\end{Lemma}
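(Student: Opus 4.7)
The plan is to combine the variational-type inequality of Lemma \ref{lemma1} (choosing $\phi=u_\delta$ as a competitor) with the Radon--Riesz property of the uniformly convex space $HW_0^{s,p}(\Omega)$.

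First, by Lemma \ref{apun2}-$(i)$ the sequence $\{u_n\}_{n\in\mathbb{N}}$ is bounded in $HW_0^{s,p}(\Omega)$. Since $u_n\to u_\delta$ pointwise in $\Omega$ (and $u_n=0=u_\delta$ in $\mathbb{H}^N\setminus\Omega$) and $u_\delta\in HW_0^{s,p}(\Omega)$ by Theorem \ref{thm1}-$(a)$, I would upgrade the pointwise convergence to weak convergence in $HW_0^{s,p}(\Omega)$ along a subsequence (the weak limit is forced to coincide with the pointwise limit).

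Next, I would apply Lemma \ref{lemma1} with $\phi=u_\delta$ to get
\begin{equation*}
\|u_n\|^p \leq \|u_\delta\|^p + p\int_\Omega (u_n-u_\delta)\Big(u_n+\tfrac{1}{n}\Big)^{-\delta} f_n\,dx.
\end{equation*}
The key observation is that, by Remark \ref{rmkapprox}, $u_n\leq u_\delta$ in $\Omega$, so the integrand is nonpositive. Rewriting the integral as $-\int_\Omega (u_\delta-u_n)(u_n+\frac{1}{n})^{-\delta}f_n\,dx$, I would apply Fatou's lemma to the nonnegative sequence $(u_\delta-u_n)(u_n+\frac{1}{n})^{-\delta}f_n$: on compact subsets $\omega\Subset\Omega$ we have $u_n\geq C(\omega)>0$ (Lemma \ref{approx}) so $(u_n+\frac{1}{n})^{-\delta}\to u_\delta^{-\delta}$ and $(u_\delta-u_n)\to 0$ pointwise, while $f_n\leq f$ is controlled in $L^m$. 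This gives
\begin{equation*}
\liminf_{n\to\infty}\int_\Omega (u_\delta-u_n)\Big(u_n+\tfrac{1}{n}\Big)^{-\delta} f_n\,dx \geq 0,
\end{equation*}
so that $\limsup_{n\to\infty}\|u_n\|^p\leq \|u_\delta\|^p$. Combined with the weak lower semicontinuity of the norm, $\|u_\delta\|\leq\liminf_{n\to\infty}\|u_n\|$, this yields $\|u_n\|\to\|u_\delta\|$.

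Finally, I would observe that the norm on $HW_0^{s,p}(\Omega)$ is the $L^p$-norm of the Gagliardo-type difference quotient, hence $HW_0^{s,p}(\Omega)$ embeds isometrically in $L^p(\mathbb{H}^N\times\mathbb{H}^N,d\mu)$ and therefore inherits uniform convexity. By the Radon--Riesz property, weak convergence together with convergence of norms implies strong convergence, so $u_n\to u_\delta$ in $HW_0^{s,p}(\Omega)$ along the subsequence. The most delicate step is the Fatou argument, because the singular factor $(u_n+\frac{1}{n})^{-\delta}$ is not uniformly controlled near $\partial\Omega$; the nonpositivity of the integrand (thanks to $u_n\leq u_\delta$) is what saves the day, since it lets one pass to the limit by Fatou rather than by dominated convergence.
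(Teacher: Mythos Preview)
Your proof is correct and follows essentially the same route as the paper: take $\phi=u_\delta$ in Lemma~\ref{lemma1}, use $u_n\leq u_\delta$ to control the remainder term, and then combine weak convergence with norm convergence via uniform convexity. Note, however, that your Fatou step is superfluous: once you observe that $u_n\leq u_\delta$ makes the integrand $(u_n-u_\delta)(u_n+\tfrac{1}{n})^{-\delta}f_n$ nonpositive, you immediately get $\|u_n\|^p\leq\|u_\delta\|^p$ for \emph{every} $n$, which is exactly how the paper proceeds.
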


\begin{proof}
Observe that $u_n \leq u_\delta$. By choosing $\phi = u_\delta$ in \eqref{prop1}, we get
\[
\|u_n\| \leq \|u_\delta\|,
\]
which, together with the monotonicity property \eqref{nmon} from Lemma \ref{lemma1}, implies
\begin{equation}\label{lim1}
\lim_{n \to \infty} \|u_n\| \leq \|u_\delta\|.
\end{equation}

By Lemma \ref{apun2}-$(i)$, the sequence $\{u_n\}_{n\in\mathbb{N}}$ is uniformly bounded in $HW_0^{s,p}(\Omega)$. Therefore, up to a subsequence, $u_n \rightharpoonup u_\delta$ weakly in $HW_0^{s,p}(\Omega)$, and consequently,
\begin{equation}\label{lim2}
\|u_\delta\| \leq \lim_{n \to \infty} \|u_n\|.
\end{equation}

Combining \eqref{lim1} and \eqref{lim2} and using the uniform convexity of $HW_0^{s,p}(\Omega)$, the claim follows.
\end{proof}

\begin{Lemma}\label{minprop}
Define the functional $I_{\delta} : W_0^{1,p}(\Omega) \to \mathbb{R}$ by
\[
I_{\delta}(v) := \frac{1}{p} \|v\|^p - \frac{1}{1-\delta} \int_{\Omega} (v^{+})^{1-\delta} f \, dx.
\]
Then, $u_{\delta}$ is a minimizer of $I_{\delta}$.
\end{Lemma}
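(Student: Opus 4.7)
The plan is to realise $u_\delta$ as the limit of minimisers of natural approximating functionals and then transfer the minimisation property via the strong convergence obtained in Lemma \ref{strong}. For each $n \in \mathbb{N}$, introduce
$$I_{\delta,n}(v) := \frac{1}{p}\|v\|^p - \frac{1}{1-\delta}\int_\Omega \Big(v^+ + \tfrac{1}{n}\Big)^{1-\delta} f_n\,dx, \qquad v \in HW_0^{s,p}(\Omega),$$
which is the energy whose formal Euler--Lagrange equation is precisely the approximate problem \eqref{approxeqn}.

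The first step is to show that $u_n$ is a minimiser of $I_{\delta,n}$ on $HW_0^{s,p}(\Omega)$. From $|v^+(x)-v^+(y)| \leq |v(x)-v(y)|$ one has $\|v^+\| \leq \|v\|$, while the integral term depends only on $v^+$; hence $I_{\delta,n}(v^+) \leq I_{\delta,n}(v)$, and the infimum can be restricted to the convex cone $\mathcal{C} := \{v \in HW_0^{s,p}(\Omega) : v \geq 0\}$. On $\mathcal{C}$, since $0 < 1-\delta < 1$, the map $v \mapsto (v + 1/n)^{1-\delta}$ is concave, so $v \mapsto -(v+1/n)^{1-\delta}$ is convex. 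Combined with the strict convexity of $v \mapsto \|v\|^p/p$, this makes $I_{\delta,n}$ strictly convex on $\mathcal{C}$. Because $u_n \in \mathcal{C}$ solves \eqref{approxeqn}, it is a critical point of $I_{\delta,n}$, hence the unique minimiser of $I_{\delta,n}$ on $\mathcal{C}$, and therefore on all of $HW_0^{s,p}(\Omega)$.

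The second step is to pass to the limit in
$$\frac{1}{p}\|u_n\|^p - \frac{1}{1-\delta}\int_\Omega \Big(u_n + \tfrac{1}{n}\Big)^{1-\delta} f_n\,dx \leq \frac{1}{p}\|v\|^p - \frac{1}{1-\delta}\int_\Omega \Big(v^+ + \tfrac{1}{n}\Big)^{1-\delta} f_n\,dx$$
for an arbitrary $v \in HW_0^{s,p}(\Omega)$. By Lemma \ref{strong}, $\|u_n\|^p \to \|u_\delta\|^p$. For the nonlinear integrals, the integrands converge a.e.\ to $u_\delta^{1-\delta} f$ and $(v^+)^{1-\delta} f$ respectively; since $u_n \leq u_\delta$ and $f_n \leq f$, they are dominated by $(u_\delta + 1)^{1-\delta} f$ and $(|v|+1)^{1-\delta} f$, and both dominants lie in $L^1(\Omega)$ by H\"older's inequality with the conjugate exponent $m = (p_s^{*}/(1-\delta))'$ together with the Sobolev embedding $HW_0^{s,p}(\Omega) \hookrightarrow L^{p_s^{*}}(\Omega)$ from Lemma \ref{emb}. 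Lebesgue's dominated convergence theorem then yields $I_\delta(u_\delta) \leq I_\delta(v)$, which is the desired conclusion.

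The main obstacle is the first step: one must verify that the critical point $u_n$ is genuinely a minimiser of $I_{\delta,n}$, even though this functional is not convex on all of $HW_0^{s,p}(\Omega)$. The truncation estimate $I_{\delta,n}(v^+) \leq I_{\delta,n}(v)$ confines the minimisation to the cone $\mathcal{C}$, where the concavity of $t \mapsto t^{1-\delta}$ (which essentially uses $\delta \in (0,1)$) restores strict convexity and makes the critical point unique. Once this is in place, the passage to the limit via Lemma \ref{strong} and dominated convergence is routine.
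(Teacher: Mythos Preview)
Your argument is correct and the overall architecture matches the paper's: show that $u_n$ minimises an approximate functional, then pass to the limit using Lemma~\ref{strong} and dominated convergence. The difference lies entirely in how you establish that $u_n$ is the minimiser. The paper defines a globally $C^1$ functional $I_n(v)=\tfrac{1}{p}\|v\|^p-\int_\Omega G_n(v)f_n\,dx$ with $G_n(t)=\tfrac{1}{1-\delta}(t^++\tfrac{1}{n})^{1-\delta}-n^{\delta}t^-$, obtains a minimiser $v_n$ by the direct method (coercivity and weak lower semicontinuity), observes $I_n(v_n)\le I_n(v_n^+)$ forces $v_n\ge 0$, and then invokes the uniqueness of the approximate solution to conclude $v_n=u_n$. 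You instead restrict to the nonnegative cone from the outset and exploit the concavity of $t\mapsto (t+1/n)^{1-\delta}$ to make $I_{\delta,n}$ strictly convex there, so that the critical point $u_n$ is automatically the unique minimiser. Your route is arguably more direct, since it bypasses both the existence argument for minimisers and the appeal to uniqueness of solutions; the paper's route has the advantage that the auxiliary $G_n$ makes the functional genuinely $C^1$ on the whole space, avoiding any discussion of differentiability of $v\mapsto v^+$. Either way, the limit step is identical.
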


\begin{proof}
Consider the auxiliary functional $I_n : HW_0^{s,p}(\Omega) \to \mathbb{R}$ defined by
\[
I_n(v) := \frac{1}{p} \|v\|^p - \int_{\Omega} G_n(v) f_n \, dx,
\]
where
\[
G_n(t) := \frac{1}{1-\delta} \left(t^{+} + \frac{1}{n}\right)^{1-\delta} - \left(\frac{1}{n}\right)^{-\delta} t^{-}.
\]
Note that $I_n$ is coercive, bounded from below, and of class $C^1$. Therefore, $I_n$ admits a minimizer $v_n \in W_0^{1,p}(\Omega)$ such that
\[
\langle I_n'(v_n), \phi \rangle = 0 \quad \text{for all } \phi \in HW_0^{s,p}(\Omega).
\]

Since $I_n(v_n) \leq I_n(v_n^{+})$, it follows that $v_n \geq 0$ in $\Omega$. Hence, $v_n$ solves the approximated problem \eqref{approxeqn}. By the uniqueness result in Lemma \ref{approx}, we have $u_n = v_n$, implying that $u_n$ is a minimizer of $I_n$. Thus, for every $v \in HW_0^{s,p}(\Omega)$,
\begin{equation}\label{min}
I_n(u_n) \leq I_n(v^{+}).
\end{equation}

Since $u_n \leq u_\delta$, applying the Lebesgue dominated convergence theorem yields
\[
\lim_{n \to \infty} \int_{\Omega} G_n(u_n) f_n \, dx = \frac{1}{1-\delta} \int_{\Omega} u_\delta^{1-\delta} f \, dx.
\]

Moreover, by Lemma \ref{strong},
\[
\lim_{n \to \infty} \|u_n\| = \|u_\delta\|.
\]

Therefore,
\begin{equation}\label{newlim2}
\lim_{n \to \infty} I_n(u_n) = I_\delta(u_\delta).
\end{equation}

Additionally, for any $v \in W_0^{1,p}(\Omega)$,
\begin{equation}\label{lim3}
\lim_{n \to \infty} \int_{\Omega} G_n(v^{+}) f_n \, dx = \frac{1}{1-\delta} \int_{\Omega} (v^{+})^{1-\delta} f \, dx.
\end{equation}

Since $\|v^{+}\| \leq \|v\|$, using \eqref{newlim2} and \eqref{lim3} in \eqref{min}, we conclude that
\[
I_\delta(u_\delta) \leq I_\delta(v) \quad \text{for all } v \in HW_0^{s,p}(\Omega).
\]
This completes the proof.
\end{proof}

\section{Proof of the main results}
\subsection{Proof of the existence results}
\textbf{Proof of Theorem \ref{varthm1}:} We prove the theorem for $\delta_*=1$ and $\delta_*>1$ separately.
\begin{itemize}
    \item Let $\delta_*=1$. Then by Lemma \ref{apun1}-$(i)$, the sequence of solutions $\{u_n\}_{n\in \mathbb N}\subset HW^{s,p}_0(\Omega)\cap L^\infty(\Omega)$ of the problem~\eqref{approxeqn}
provided by Lemma \ref{approx} is uniformly bounded in ${HW^{s,p}_0(\Omega)}$.
Consequently, by Lemma \ref{emb}, it follows that up to a subsequence, we have $u_n \rightharpoonup u$ weakly in $HW^{s,p}_0(\Omega),$
$u_n \to u$ strongly in $L^r(\Omega)$ for $1\le r< p_s^\ast$ and $u_n \rightarrow u$ pointwise a.e.\ in $\Omega$
and, furthermore, by Lemma~\ref{approx}, we have
$$
\text{for all $K\Subset\Omega$ there exists a constant $C=C(K)>0$ such that $u(x)\geq C>0$,\quad
	for a.e. $x\in K$.}
$$	
We have
\begin{equation}\label{eq:equazPn}
\int_{\mathbb{H}^{N}}\int_{\mathbb{H}^{N}}\frac{J_p(u_n(x) - u_n(y))\, (\phi(x) - \phi(y))}{|y^{-1}\circ x|^{Q+sp}}\,dx dy
=\int_{\Omega}{f_n(x)}{\Big(u_n+\frac{1}{n}\Big)^{-\delta(x)}}\,\phi\,dx,
\end{equation}
for all $\phi \in C^1_c(\Omega)$. Since the sequence
$$
\left\{\frac{J_p(u_n(x)-u_n(y))}{|y^{-1}\circ x|^\frac{Q+sp}{p'}}\right\}_{n\in {\mathbb N}}\quad \text{is bounded in $L^{p'}({\mathbb H}^{2N})$},
$$
and by the pointwise convergence of $u_n$ to $u$
$$
\frac{J_p(u_n(x)-u_n(y))}{|y^{-1}\circ x|^\frac{Q+sp}{p'}}\to\frac{J_p(u(x)-u(y))}{|y^{-1}\circ x|^\frac{Q+sp}{p'}}
\quad \text{ pointwise a.e. in ${\mathbb H}^{2N}$},
$$
it follows by standard results that
$$
\frac{J_p(u_n(x)-u_n(y))}{|y^{-1}\circ x|^\frac{Q+sp}{p'}}\to\frac{J_p(u(x)-u(y))}{|y^{-1}\circ x|^\frac{Q+sp}{p'}}
\quad \text{weakly in $L^{p'}({\mathbb H}^{2N})$}.
$$
Then, since for $\phi \in C^1_c(\Omega)$ we have
$$
\frac{\phi(x) - \phi(y)}{|y^{-1}\circ x|^{\frac{Q+s\,p}{p}}}\in L^p({\mathbb H}^{2N}),
$$
we conclude that
\begin{equation}\label{lhs}
\begin{split}
&\lim_{n \rightarrow +\infty}\int_{\mathbb{H}^{N}}\int_{\mathbb{H}^{N}} \frac{J_p(u_n(x)-u_n(y))(\phi(x)-\phi(y))}{|y^{-1}\circ x|^\frac{Q+sp}{p'}}\,dx dy\\
&=\int_{\mathbb{H}^{N}}\int_{\mathbb{H}^{N}} \frac{J_p(u(x)-u(y))(\phi(x)-\phi(y))}{|y^{-1}\circ x|^\frac{Q+sp}{p'}}\, dx\, dy,
\end{split}
\end{equation}
for all $\phi \in C^1_c(\Omega)$.
Concerning the right-hand side of formula \eqref{eq:equazPn}, recalling Lemma \ref{approx}, for any $\phi \in C^1_c(\Omega)$
with ${\rm supp}(\phi)=K$, there exists $C=C(K)>0$ independent of $n$ such that
\begin{equation*}
\left |{f_n(x)\,\phi}{\Big(u_n+\frac{1}{n}\Big)^{-\delta(x)}} \right|\leq\big\|C^{-\delta(x)}\big\|_{L^\infty(\Omega)} |f(x)\phi(x)| \in L^1(\Omega).
\end{equation*}
By the Lebesgue's dominated convergence theorem we have
\begin{equation}\label{dct}
\lim_{n\to\infty}\int_{\Omega}{f_n(x)}{\Big(u_n+\frac{1}{n}\Big)^{-\delta(x)}} \,\phi\, dx=\int_{\Omega}{f(x)}{u^{-\delta(x)}} \,\phi\, dx.
\end{equation}
Finally, using \eqref{lhs} and \eqref{dct} in \eqref{eq:equazPn}, we conclude that
\begin{equation*}
\int_{\mathbb{H}^{N}}\int_{\mathbb{H}^{N}}\frac{J_p((u(x) - u(y))\, (\phi(x) - \phi(y))}{|y^{-1}\circ x|^{Q+s\,p}}\, dx\, dy
=\int_{\Omega}{f(x)}{u^{-\delta(x)}}\,\phi\, dx,
\end{equation*}
for all $\phi \in C^1_c(\Omega)$, namely $u\in HW_0^{s,p}(\Om)$ is a weak solution to \eqref{meqn}.

\item Let $\delta_*>1$. Then by Lemma \ref{apun1}-$(ii)$, the sequence $\{u_n\}_{n\in {\mathbb N}}$ of
	solution to \eqref{approxeqn} of Lemma \ref{approx} satisfies
	\begin{equation}\label{eq:gerry1}
	\sup_{n\in {\mathbb N}}\left[u_n^{\frac{\delta_*+p-1}{p}}\right]_{HW^{s,p}(\mathbb{H}^N)} \leq C,
	\end{equation}
    for some positive constant $C$ independent of $n$.
Since $\{u_n\}_{n\in {\mathbb N}}$ is increasing it admits pointwise limit $u$ as $n\to\infty$. In particular, by Fatou's lemma
\begin{equation}\label{eq:gerry2}
\left[u^\frac{\delta_*+p-1}{p}\right]_{HW^{s,p}(\mathbb{R}^N)}\leq \liminf_n\left[u_n^\frac{\delta_*+p-1}{p}\right]_{HW^{s,p}(\mathbb{R}^N)} \leq C.
\end{equation}
Then $u^\frac{\delta_*+p-1}{p}\in HW^{s,p}_0(\Omega)$ and, in particular $u\in L^p(\Omega)$ by Lemma \ref{emb}. Further, by Lemma \ref{mainappos}, $u^\frac{\delta_*+p-1}{p}\in HW^{s,p}_0(\Omega)$ gives that $u=0$ on $\partial\Om$ as in Definition \ref{rediri}.
Moreover, by Lemma~\ref{approx},
for all $K\Subset\Omega$ there exists $C=C(K)>0$ such that $u(x)\geq C>0$ for a.e.\ $x\in K$.
Therefore, in light of Lemma \ref{lem:dino}, we have
\begin{equation*}
\frac{|u(x) - u(y)|^{p}}{|y^{-1}\circ x|^{Q+s\,p}}\leq C^{1-\delta}\frac{\Big|u^{\frac{\delta+p-1}{p}}(x)-u^{\frac{\delta+p-1}{p}}(y)\Big|^p}{|y^{-1}\circ x|^{Q+s\,p}},
\qquad x,y\in K,\quad K\Subset{\mathbb H} ^{N}.
\end{equation*}
This yields
$$u \in HW^{s,p}_{{\rm loc}}(\Omega).$$ We have, for any $n\in {\mathbb N}$
\begin{equation}\label{eq:equazPn-2}
\int_{\mathbb{H}^{N}}\int_{\mathbb{H}^{N}}\frac{J_p(u_n(x) - u_n(y))\, (\phi(x) - \phi(y))}{|y^{-1}\circ x|^{Q+s\,p}}\, dx\, dy
=\int_{\Omega}{f_n(x)}{\Big(u_n+\frac{1}{n}\Big)^{-\delta(x)}} \,\phi\, dx,
\end{equation}
for all $\phi \in C^1_c(\Omega)$. We pass to the limit in \eqref{eq:equazPn-2}, by observing the elementary inequality $|J_p(\xi)-J_p(\xi')|\leq C(|\xi|+|\xi'|)^{p-2}|\xi-\xi'|$ for $\xi,\xi'\in{\mathbb R}$ with $|\xi|+|\xi'|>0,$ and therefore we obtain
\begin{equation}\label{eq:un-u}
\begin{split}
&\Bigg|\int_{\mathbb{H}^{N}}\int_{\mathbb{H}^{N}} \frac{J_p(u_n(x) - u_n(y))\, (\phi(x) - \phi(y))}{|y^{-1}\circ x|^{Q+s\,p}}\, dx\, dy-\int_{\mathbb{H}^{N}}\int_{\mathbb{H}^{N}} \frac{J_p(u(x) - u(y))\, (\phi(x) - \phi(y))}{|y^{-1}\circ x|^{Q+s\,p}}\, dx\, dy\Bigg|\\
&\leq \int_{\mathbb{H}^{N}}\int_{\mathbb{H}^{N}}  \frac{(|u_n(x) - u_n(y)|+|u(x) - u(y)|)^{p-2}|\bar u_n(x)-\bar u_n(y)||\phi(x) - \phi(y)|}{|y^{-1}\circ x|^{Q+s\,p}}\, dx\, dy,
\end{split}
\end{equation}
where $\bar u_n(x):= u_n(x)-u(x)$.

Let us fix $\theta>0$. We {\em claim} that there exist a compact set $\mathcal K \subset \mathbb{H}^{2N}$ such that
\begin{equation}\label{eq:un-u1}
 \int_{\mathbb{H}^{2N}\setminus \mathcal K}  \frac{(|u_n(x) - u_n(y)|+|u(x) - u(y)|)^{p-2}|\bar u_n(x)-\bar u_n(y)||\phi(x) - \phi(y)|}{|y^{-1}\circ x|^{Q+s\,p}}\, dx\, dy\leq \frac{\theta}{2},
\end{equation}
for all $n\in \mathbb N$. Let us set
\[
\mathcal S_\phi\,:={\rm supp} \,\phi \qquad \mathcal Q_\phi\,:=\,\mathbb{H}^{2N}\setminus \big(\mathcal S_\phi^c\times\mathcal S_\phi^c\big)\,.
\]
By triangular and H\"older's inequality, we get
\begin{eqnarray}\label{eq:un-u2}\\\nonumber
&&\int_{\mathbb{H}^{2N}\setminus \mathcal K}  \frac{(|u_n(x) - u_n(y)|+|u(x) - u(y)|)^{p-2}|\bar u_n(x)-\bar u_n(y)||\phi(x) - \phi(y)|}{|y^{-1}\circ x|^{Q+s\,p}}\, dx\, dy\\\nonumber
&&= \int_{\mathcal Q_\phi\setminus \mathcal K}  \frac{(|u_n(x) - u_n(y)|+|u(x) - u(y)|)^{p-1}|\phi(x) - \phi(y)|}{|y^{-1}\circ x|^{Q+s\,p}}\, dx\, dy\\\nonumber
&&\leq \left(\int_{\mathcal Q_\phi\setminus \mathcal K}  \frac{(|u_n(x) - u_n(y)|+|u(x) - u(y)|)^{p}}{|y^{-1}\circ x|^{N+s\,p}}\, dx\, dy\right)^{\frac{p-1}{p}}\left(\int_{\mathcal Q_\phi\setminus \mathcal K} \frac{|\phi(x) - \phi(y)|^{p}}{|y^{-1}\circ x|^{N+s\,p}}\, dx\, dy \right)^{\frac 1p}\\\nonumber
&&=\left(\int_{\mathcal Q_\phi\setminus \mathcal K} \frac{(|u_n(x) - u_n(y)|+|u(x) - u(y)|)^{p}}{|y^{-1}\circ x|^{Q+s\,p}}\, dx\, dy\right)^{\frac{p-1}{p}}\left(\int_{\mathbb{R}^{2N}\setminus \mathcal K} \frac{|\phi(x) - \phi(y)|^{p}}{|y^{-1}\circ x|^{Q+s\,p}}\, dx\, dy \right)^{\frac 1p}.
\end{eqnarray}
By Lemma \ref{approx}, there exists a constant $C=C({\mathcal S}_\phi)>0$ independent of $n$ such that $u(x)\geq C$ for a.e. $x
\in \mathcal S_\phi$. Moreover, by using Lemma \ref{lem:dino}  with $q=\frac{\delta_*+p-1}{p}$, we have
\begin{align}\label{eq:lagrangedino}
&\frac{(|u_n(x) - u_n(y)|+|u(x) - u(y)|)^{p}}{|y^{-1}\circ x|^{Q+s\,p}}  \nonumber \\
&\leq C(p)\frac{|u_n(x) - u_n(y)|^p+|u(x) - u(y)|^{p}}{|y^{-1}\circ x|^{Q+s\,p}}\\\nonumber
&\leq C(p)C^{1-
\delta_*}\frac{\Big|u_n^{\frac{\delta_*+p-1}{p}}(x)-u_n^{\frac{\delta_*+p-1}{p}}(y)\Big|^p+\Big|u^{\frac{\delta_*+p-1}{p}}(x)-u^{\frac{\delta_*+p-1}{p}}(y)\Big|^p}{|y^{-1}\circ x|^{Q+s\,p}},\quad\,\,\text{a.e.\ $(x,y)\in \mathcal Q_\phi$.}
\end{align}
Then from  \eqref{eq:un-u2} and \eqref{eq:lagrangedino} we infer that
\begin{eqnarray}\nonumber\\\nonumber
&&\int_{\mathbb{H}^{2N}\setminus \mathcal K}  \frac{(|u_n(x) - u_n(y)|+|u(x) - u(y)|)^{p-2}|\bar u_n(x)-\bar u_n(y)||\phi(x) - \phi(y)|}{|y^{-1}\circ x|^{N+s\,p}}\, dx\, dy\\\nonumber
&&\leq C\left(\int_{\mathbb{H}^{2N}} \frac{|u_n^{\frac{\delta+p-1}{p}}(x)-u_n^{\frac{\delta+p-1}{p}}(y)|^p}{|y^{-1}\circ x|^{N+s\,p}}\, dx\, dy\right)^{\frac{p-1}{p}}\left(\int_{\mathbb{R}^{2N}\setminus \mathcal K} \frac{(|\phi(x) - \phi(y)|)^{p}}{|y^{-1}\circ x|^{N+s\,p}}\, dx\, dy \right)^{\frac 1p}\\\nonumber
&&+ C\left(\int_{\mathbb{H}^{2N}} \frac{|u^{\frac{\delta+p-1}{p}}(x)-u^{\frac{\gamma+p-1}{p}}(y)|^p}{|y^{-1}\circ x|^{N+s\,p}}\, dx\, dy\right)^{\frac{p-1}{p}}\left(\int_{\mathbb{R}^{2N}\setminus \mathcal K} \frac{(|\phi(x) - \phi(y)|)^{p}}{|y^{-1}\circ x|^{N+s\,p}}\, dx\, dy \right)^{\frac 1p}
\\\nonumber &&\leq C \left(\int_{\mathbb{H}^{2N}\setminus \mathcal K} \frac{|\phi(x) - \phi(y)|^{p}}{|y^{-1}\circ x|^{N+s\,p}}\, dx\, dy \right)^{\frac 1p},
\end{eqnarray}
with $C=C(p,\delta,{\mathcal S_\phi})$ and where we  used \eqref{eq:gerry1} and \eqref{eq:gerry2}. Then, since $\phi \in C^1_c(\Omega)$, there exists  $\mathcal K=\mathcal K(\theta)$ such that \eqref{eq:un-u1} holds, proving the {\em claim}.

On the other hand,  consider now an arbitrary measurable subset $E\subset {\mathcal K}$. Arguing as in \eqref{eq:un-u2} and \eqref{eq:lagrangedino},
we reach the inequality
\begin{align}\nonumber
&\int_{E}  \frac{(|u_n(x) - u_n(y)|+|u(x) - u(y)|)^{p-2}|\bar u_n(x)-\bar u_n(y)||\phi(x) - \phi(y)|}{|y^{-1}\circ x|^{Q+s\,p}}\, dx\, dy\\\nonumber
& \leq C \left(\int_{E}\frac{|\phi(x) - \phi(y)|^{p}}{|y^{-1}\circ x|^{Q+s\,p}}\, dx\, dy \right)^{\frac 1p},
\end{align}
that is
\begin{equation}\nonumber
\int_{E}  \frac{(|u_n(x) - u_n(y)|+|u(x) - u(y)|)^{p-2}|\bar u_n(x)-\bar u_n(y)||\phi(x) - \phi(y)|}{|y^{-1}\circ x|^{Q+s\,p}}\, dx\, dy\rightarrow 0,
\end{equation}
uniformly on $n$, if the  Lebesgue measure of $E$ goes to zero. Moreover
\begin{equation*}
\frac{(|u_n(x) - u_n(y)|+|u(x) - u(y)|)^{p-2}|\bar u_n(x)-\bar u_n(y)||\phi(x) - \phi(y)|}{|y^{-1}\circ x|^{Q+s\,p}}\rightarrow 0\quad \text{a.e.\ in ${\mathbb H}^{2N}$}.
\end{equation*}
By Vitali's Theorem, given $\theta >0$,  there exists $\bar n >0$ such that, if $n\geq \bar n$, it follows
\begin{equation}\label{eq:un-u3}
\int_{\mathcal K}  \frac{(|u_n(x) - u_n(y)|+|u(x) - u(y)|)^{p-2}|\bar u_n(x)-\bar u_n(y)||\phi(x) - \phi(y)|}{|y^{-1}\circ x|^{Q+s\,p}}\, dx\, dy\leq \frac{\theta}{2}.
\end{equation}
From \eqref{eq:un-u}, using \eqref{eq:un-u1} and \eqref{eq:un-u3},
we are able to pass to the limit in the left-hand side of \eqref{eq:equazPn-2}, that is
\begin{eqnarray}\nonumber
\lim_{n \rightarrow +\infty}\int_{\mathbb{H}^{N}}\int_{\mathbb{H}^{N}} \frac{J_p(u_n(x) - u_n(y))\, (\phi(x) - \phi(y))}{|y^{-1}\circ x|^{Q+s\,p}}\, dx\, dy
=\int_{\mathbb{H}^{N}}\int_{\mathbb{H}^{N}} \frac{J_p(u(x) - u(y))\, (\phi(x) - \phi(y))}{|y^{-1}\circ x|^{Q+s\,p}}\, dx\, dy,
\end{eqnarray}
for all $\phi \in C^1_c(\Omega)$.
Finally, again applying Lebesgue's dominated convergence theorem to the right-hand side of \eqref{eq:equazPn-2}, and employing the above estimate in \eqref{eq:equazPn-2}, we obtain
\begin{equation*}
\int_{\mathbb{H}^{N}}\int_{\mathbb{H}^{N}} \frac{J_p(u(x) - u(y))\, (\phi(x) - \phi(y))}{|y^{-1}\circ x|^{Q+s\,p}}\, dx\, dy
=\int_{\Omega}{f(x)}{u^{-\delta(x)}}\,\phi\, dx,
\end{equation*}
for all $\phi \in C^1_c(\Omega)$, namely $u$ is a weak solution to \eqref{meqn}.
\end{itemize}

\textbf{Proof of Theorem \ref{thm1}:}
\begin{enumerate}
    \item[$(i)$] Let $0<\delta<1$. Then by Lemma \ref{apun2}-$(i)$, the sequence of solutions $\{u_n\}_{n\in \mathbb N}\subset HW^{s,p}_0(\Omega)$ of the problem~\eqref{approxeqn}
provided by Lemma \ref{approx} is uniformly bounded in ${HW^{s,p}_0(\Omega)}$. Now the result follows follows proceeding along the lines of the proof of Theorem \ref{varthm1} for the case $\delta_*=1$.

 \item[$(ii)$] Let $\delta=1$. Then by Lemma \ref{apun2}-$(ii)$, the sequence of solutions $\{u_n\}_{n\in \mathbb N}\subset HW^{s,p}_0(\Omega)$ of the problem~\eqref{approxeqn}
provided by Lemma \ref{approx} is uniformly bounded in ${HW^{s,p}_0(\Omega)}$. Now the result follows follows proceeding along the lines of the proof of Theorem \ref{varthm1} for the case $\delta_*=1$.

\item[$(iii)$] Let $\delta>1$. Then by Lemma \ref{apun2}-$(iii)$, the sequence $\Big\{u_n^\frac{\delta_*+p-1}{p}\Big\}_{n\in \mathbb N}\subset HW^{s,p}_0(\Omega)$ of the problem~\eqref{approxeqn}
provided by Lemma \ref{approx} is uniformly bounded in ${HW^{s,p}_0(\Omega)}$. Now the result follows follows proceeding along the lines of the proof of Theorem \ref{varthm1} for the case $\delta_*>1$.
\end{enumerate}

\subsection{Proof of the regularity results}
\textbf{Proof of Theorem \ref{regthm}:} 
\begin{enumerate}
\item[$(i)$] Choosing the test function $\phi=u_n^\gamma$ ($\gamma\geq \delta_*$ to be determined later) in the weak formulation of \eqref{approxeqn}, we have
\begin{align}\label{M7}
{\int_{\mathbb{H}^N}\int_{\mathbb{H}^N} \frac{J_p(u_n(x)-u_n(y))(u_n^\gamma(x)-u_n^\gamma(y))}{|y^{-1}\circ x|^{Q+sp}}\, dxdy}\leq \int_\Omega{v_n^\gamma f_n}{\Big(v_n+\frac{1}{n}\Big)^{-\delta(x)}}\, dx.  
\end{align}
By the condition $(P_{\epsilon,\delta_*})$, we observe that
\begin{align}\label{M8}
\int_\Omega {u_n^\gamma f_n}{\Big(u_n+\frac{1}{n}\Big)^{-\delta(x)}}\, dx&\leq \int_{\Omega\cap\Omega_\epsilon^c}{u_n^\gamma f}{\Big(u_n+\frac{1}{n}\Big)^{-\delta(x)}}\, dx+\int_{\{x\in\Omega_\epsilon: u_n\leq 1\}} {u_n^\gamma f}{\Big(u_n+\frac{1}{n}\Big)^{-\delta(x)}}\, dx\nonumber\\
&+\int_{\{x\in\Omega_\epsilon:u_n>1\}} {u_n^\gamma f}{\Big(u_n+\frac{1}{n}\Big)^{-\delta(x)}}\, dx\nonumber \\
&\leq \Big\|{C(\epsilon)^{-\delta(x)}}\Big\|_{L^\infty(\Omega)}\int_\Omega u_n^\gamma f\, dx+\int_{\{x\in\Omega_\epsilon: u_n\leq 1\}} u_n^{\gamma-\delta(x)} f\, dx\nonumber\\
&+\int_{\{x\in\Omega_\epsilon: u_n>1\}} u_n^\gamma f\, dx\nonumber\\
&\leq C\left (\int_\Omega u_n^\gamma f\, dx+\int_{\{x\in\Omega_\epsilon: u_n\leq 1\}} f\, dx+\int_{\{x\in\Omega_\epsilon: u_n>1\}} v_n^\gamma f\, dx\right )\nonumber\\
&\leq C\left (\int_\Omega u_n^\gamma f\, dx+||f||_{L^r(\Omega)}\right )\nonumber\\
&\leq C||f||_{L^r(\Omega)}\left [ 1+\left (\int_\Omega |u_n|^{r'\gamma}\, dx\right )^\frac{1}{r'} \right],
\end{align}
for some constant $C>0$ independent of $n$. Using Lemma \ref{BPalg} in (\ref{M7}) and employing the estimate and (\ref{M8}), we deduce
 \begin{align*}
     \Big\|u_n^\frac{\gamma+p-1}{p}\Big\|^p\leq C||f||_{L^r(\Omega)}\left [ 1+\left (\int_\Omega |u_n|^{r'\gamma}\, dx\right )^\frac{1}{r'} \right],
 \end{align*}
 for some constant $C>0$ independent of $n$. By Lemma \ref{emb}, we obtain
   \begin{align}\label{M9}
    \left (\int_\Omega u_n^\frac{p_s^*(\gamma+p-1)}{p}\, dx\right )^\frac{p}{p_s^*}\leq C||f||_{L^r(\Omega)}\left [ 1+\left (\int_\Omega |v_n|^{r'\gamma}\, dx\right )^\frac{1}{r'} \right],
\end{align}
for some constant $C>0$ independent of $n$. We choose $\gamma$ such that $\frac{p_s^*(\gamma+p-1)}{p}=r'\gamma$, i.e., $\gamma=\frac{N(p-1)(r-1)}{r(N-sp)-N(r-1)}$. Since $\frac{Q(\delta_*+p-1)}{Q(p-1)+\delta_* sp}\leq m<\frac{Q}{sp}$, we have $\gamma\ge\delta_*$ and $\frac{1}{r'}<\frac{p}{p_s^*}$. Thus inequality (\ref{M9}) gives that $\{u_n\}_{n\in\mathbb{N}}$ is uniformly bounded in $L^{m'\gamma}(\Omega)$.

\item[$(ii)$] Let $k\geq 1$ and define $A(k)=\{x\in\Om: u_n(x)\geq k\}$. Choosing $\phi_k(x)=(u_n-k)^{+}\in HW_0^{s,p}(\Om)$ as a test function in \eqref{approxeqn}, using H\"older's inequality with the exponents $p_s^{*'}, p_s^*$ and then by Young's inequality with exponents $p$ and $p'$ along with Lemma \ref{emb} we obtain
\begin{multline}
\label{unibdd}
\|\phi_k\|^p\leq\int_{\Om}\frac{f_n}{\big(u_n+\frac{1}{n}\big)^{\delta(x)}}\phi_k\,dx\leq\int_{A(k)}f(x)\phi_k\,dx\\
\leq\left(\int_{A(k)}f^{p_s^{*'}}\,dx\right)^\frac{1}{p_s^{*'}}\left(\int_{\Om}\phi_k^{p_s^*}\,dx\right)^\frac{1}{p_s^*}
\leq C\left(\int_{A(k)}f^{p_s^{*'}}\,dx\right)^\frac{1}{p_s^{*'}}\|\phi_k\|\\
\leq \epsilon\|\phi_k\|^p+C(\epsilon)\left(\int_{A(k)}f^{p_s^{*'}}\,dx\right)^\frac{p'}{p_s^{*'}}.
\end{multline}
To obtain the second inequality above, we have also used that since $u_n\geq k\geq 1$ on $\mathrm{supp}\,\phi$, therefore $\big|\frac{\phi}{(u_n+\frac{1}{n})^{\delta(x)}}\big|\leq \phi$ {on $\mathrm{supp}\,\phi$}. Here, $C$ is the Sobolev constant and  $C(\epsilon)>0$ is some constant depending on $\epsilon\in(0,1)$. Note that $q>\frac{Q}{sp}$ gives $q>p_s^{*'}$. Therefore, fixing $\epsilon\in(0,1)$ and again using H\"older's inequality with exponents $\frac{q}{p_s^{*'}}$ and $\big(\frac{q}{p_s^{*'}}\big)'$, we obtain
\begin{align*}
\|\phi_k\|^p\leq C\left(\int_{A(k)}f^{p_s^{*'}}\,dx\right)^\frac{p'}{p_s^{*'}}\leq C\left(\int_{A(k)}f^q\,dx\right)^\frac{p'}{q}|A(k)|^{\frac{p'}{p_s^{*'}}\frac{1}{\big(\frac{q}{p_s^{*'}}\big)'}}.
\end{align*}
Let $h>0$ be such that $1\leq k<h$. Then, $A(h)\subset A(k)$ and for any $x\in A(h)$, we have $u_n(x)\geq h$. So, $u_n(x)-k\geq h-k$ in $A(h)$. 
Noting these facts, for some constant $C>0$ (independent of $n$), we have
\begin{multline*}
(h-k)^p|A(h)|^\frac{p}{p_s^*}\leq\left(\int_{A(h)}(u_n-k)^{p_s^*}\,dx\right)^\frac{p}{p_s^*}\leq\left(\int_{A(k)}(u_n-k)^{p_s^*}\,dx\right)^\frac{p}{p_s^*}\\
\leq C\|\phi_k\|^p\leq C\|f\|_{L^q(\Om)}^{p'}|A(k)|^{\frac{p'}{p_s^{*'}}\frac{1}{\big(\frac{q}{p_s^{*'}}\big)'}}.
\end{multline*}
Thus, for some constant $C>0$ (independent of $n$), we have
$$
|A(h)|\leq C\frac{\|f\|_{L^q(\Om)}^\frac{p_s^*}{p-1}}{(h-k)^{p^*}}|A(k)|^{\alpha},\,\,\text{where}\,\,\alpha={\frac{p_s^{*}p'}{pp_s^{*'}}\frac{1}{\big(\frac{q}{p_s^{*'}}\big)'}}.
$$
Due to the assumption, $q>\frac{Q}{sp}$, we have $\alpha>1$. Hence, by \cite[Lemma B.$1$]{Stam}, we have
$$
\|u_n\|_{L^\infty(\Om)}\leq C,
$$
for some positive constant $C>0$, independent of $n$. Therefore, $u\in L^\infty(\Om)$.
    \end{enumerate}

\textbf{Proof of Theorem \ref{regthm1}:}
\begin{enumerate}
\item[$(i)$] We observe that 
\begin{itemize}
\item for $m = \big(\frac{p_s^{*}}{1-\delta}\big)'$ i.e., $(1-\delta)m' = p_s^{*}$, we have $\gamma = \frac{(\delta+p-1)m^{'}}{(pm^{'}-p_{s}^*)} = 1$ and
\item $m \in \big((\frac{p_s^{*}}{1-\delta})^{'},\frac{p_s^{*}}{p_s^{*}-p}\big)$ implies $\gamma = \frac{(\delta+p-1)m^{'}}{(pm^{'}-p_{s}^*)} > 1$. 
\end{itemize}
Note that $(p\gamma-p+1-\delta)m' = p_s^{*}\gamma$ and choosing $\phi = u_n^{p\gamma-p+1}\in HW_0^{s,p}(\Om)$ as a test function in \eqref{approxeqn}, and applying Lemma \ref{BPalg} we obtain
\begin{align*}
||u_n^{\gamma}||^{p}
&\leq ||f||_{L^m(\Omega)}\Big(\int_{\Omega}|u_n|^{p_s^{*}\gamma}\,dx\Big)^\frac{1}{m'}.
\end{align*}
By Lemma \ref{emb}, using the continuous embedding $HW_0^{s,p}(\Om) \hookrightarrow L^{p_s^{*}}(\Omega)$ and the fact $\frac{p}{p_s^{*}}-\frac{1}{m'} > 0$ we obtain
$
||u_n^\gamma||_{L^{p_s^{*}}(\Omega)} \leq c,
$
where $C$ is independent of $n$. Therefore, the sequence $\{u_n^\gamma\}_{n\in\mathbb{N}}$ is uniformly bounded in $L^t(\Omega)$ where $t = p_s^{*}\gamma$. Therefore the pointwise limit $u$ belong to $L^t(\Omega)$.

\item[$(ii)$] Let $k\geq 1$ and define $A(k)=\{x\in\Om: u_n(x)\geq k\}$. Choosing $\phi_k(x)=(u_n-k)^{+}\in HW_0^{s,p}(\Om)$ as a test function in \eqref{approxeqn}, and proceeding similarly as in the proof of Theorem \ref{regthm}, the result follows.
\end{enumerate}

\textbf{Proof of Theorem \ref{regthm2}:}
\begin{enumerate}
\item[$(i)$] Observe that $m \in \big(1,\frac{p_s^{*}}{p_s^{*}-p}\big)$ implies $\gamma = \frac{pm^{'}}{(pm^{'}-p_{s}^*)} > 1$. Now choosing $\phi = u_n^{p\gamma-p+1} \in HW_0^{s,p}(\Om)$ as a test function in \eqref{approxeqn} and from Lemma \ref{emb}, using the continuous embedding $HW_0^{s,p}(\Om)\hookrightarrow L^{p_s^{*}}(\Omega)$, the result follows arguing similarly as in the proof of Theorem \ref{regthm1}-$(i)$.
\item[$(ii)$] Follows similarly as in the proof of Theorem \ref{regthm}.
\end{enumerate}

\textbf{Proof of Theorem \ref{regthm3}:}
\begin{enumerate}
\item[$(i)$] Observe that $m \in \big(1,\frac{p_s^{*}}{p_s^{*}-p}\big)$ implies $\gamma = \frac{(\delta+p-1)m'}{pm'-p_s^*} > \frac{\delta+p-1}{p} > 1$, since $\delta > 1$. Now choosing $\phi = u_n^{p\gamma-p+1} \in HW_0^{s,p}(\Om)$ as a test function in \eqref{approxeqn} and from Lemma \ref{emb}, using the continuous embedding $HW_0^{s,p}(\Om)\hookrightarrow L^{p_s^{*}}(\Omega)$, the result follows arguing similarly as in the proof of Theorem \ref{regthm1}-$(i)$.
\item[$(ii)$] Follows similarly as in the proof of Theorem \ref{regthm}.
\end{enumerate}

\subsection{Proof of the uniqueness result}
\begin{proof}[Proof of Theorem \ref{main}]
If $u$ and $v$ are two weak solutions to \eqref{meqn} with zero Dirichlet boundary condition, then we have that $u \leq v$ by Theorem \ref{comparison}. In the same way, it follows that $v \leq u$.
\end{proof}

\subsection{Proof of the Sobolev type inequality with extremal}
\textbf{Proof of Theorem \ref{thm5}:}  
We recall the norm on $HW_0^{s,p}(\Omega)$ given by  
\[
\|v\| = \left( \int_{\mathbb{H}^N} \int_{\mathbb{H}^N} \frac{|v(x) - v(y)|^p}{|y^{-1} \circ x|^{Q+sp}} \, dx \, dy \right)^{\frac{1}{p}}.
\]

\begin{enumerate}
\item[(a)] To prove the theorem, it is enough to show  
\[
\Theta(\Omega) := \inf_{v \in S_\delta} \|v\|^p = \|u_\delta\|^{\frac{p(1-\delta-p)}{1-\delta}},
\]
where  
\[
S_\delta = \left\{ v \in HW_0^{s,p}(\Omega) : \int_\Omega |v|^{1-\delta} f \, dx = 1 \right\}.
\]

Define  
\[
V_\delta = \tau_\delta u_\delta, \quad \text{with} \quad \tau_\delta = \left(\int_\Omega u_\delta^{1-\delta} f \, dx\right)^{-\frac{1}{1-\delta}},
\]
so that $V_\delta \in S_\delta$.

From Lemma \ref{testfn} and by testing \eqref{wksoleqn} with $u_\delta$, we get  
\[
\|u_\delta\|^p = \int_\Omega u_\delta^{1-\delta} f \, dx.
\]

Then,  
\[
\|V_\delta\|^p = \tau_\delta^p \|u_\delta\|^p = \left( \int_\Omega u_\delta^{1-\delta} f \, dx \right)^{-\frac{p}{1-\delta}} \|u_\delta\|^p = \|u_\delta\|^{\frac{p(1-\delta-p)}{1-\delta}}.
\]

Next, for any $v \in S_\delta$, set  
\[
\lambda = \|v\|^{-\frac{p}{p+\delta-1}}.
\]

Since $u_\delta$ minimizes the functional $I_\delta$, we have  
\[
I_\delta(u_\delta) \leq I_\delta(\lambda |v|).
\]

Writing out $I_\delta$, this inequality implies  
\[
\|u_\delta\|^{\frac{p(1-\delta-p)}{1-\delta}} \leq \|v\|^p.
\]

Since $v$ was arbitrary in $S_\delta$, we conclude  
\[
\Theta(\Omega) = \inf_{v \in S_\delta} \|v\|^p = \|u_\delta\|^{\frac{p(1-\delta-p)}{1-\delta}}.
\]

\item[(b)] Assume the mixed Sobolev inequality \eqref{inequality2} holds.

If $C > \Theta(\Omega)$, then by part (a),  
\[
C \left( \int_\Omega V_\delta^{1-\delta} f \, dx \right)^{\frac{p}{1-\delta}} > \|V_\delta\|^p,
\]
which contradicts \eqref{inequality2}.

Conversely, if  
\[
C \leq \Theta(\Omega),
\]
then for any $v \in HW_0^{s,p}(\Omega) \setminus \{0\}$, define  
\[
w = \left( \int_\Omega |v|^{1-\delta} f \, dx \right)^{-\frac{1}{1-\delta}} v \in S_\delta.
\]

Thus,  
\[
C \leq \|w\|^p = \left( \int_\Omega |v|^{1-\delta} f \, dx \right)^{-\frac{p}{1-\delta}} \|v\|^p,
\]
which proves the inequality.

\item[(c)] From part (a), $\Theta(\Omega) = \|V_\delta\|^p$.

Suppose $v \in S_\delta$ also attains this minimum, i.e., $\|v\|^p = \Theta(\Omega)$.

First, $v$ cannot change sign in $\Omega$. Indeed, if $v$ changes sign, then  
\[
\||v|\|^p < \|v\|^p,
\]
but since $|v| \in S_\delta$, we would have  
\[
\Theta(\Omega) = \|v\|^p \leq \||v|\|^p,
\]
a contradiction.

Without loss of generality, assume $v \geq 0$. Define  
\[
g = \left( \int_\Omega \left( \frac{v}{2} + \frac{V_\delta}{2} \right)^{1-\delta} f \, dx \right)^{\frac{1}{1-\delta}}.
\]

By convexity,  
\[
g \geq \frac{1}{2} + \frac{1}{2} = 1.
\]

Set  
\[
h = \frac{v + V_\delta}{2g} \in S_\delta.
\]

Using the triangle inequality and strict convexity of the norm, one obtains  
\[
\Theta(\Omega) \leq \|h\|^p \leq \frac{1}{g^p} \left\| \frac{v + V_\delta}{2} \right\|^p \leq \frac{\Theta(\Omega)}{g^p} \leq \Theta(\Omega).
\]

This forces $g=1$ and equality holds in the convexity inequality, so  
\[
v = V_\delta.
\]

Therefore, the minimizer is unique up to sign.

If \eqref{sim} holds for some nonzero $w$, then  
\[
\gamma w \in S_\delta,
\]
where  
\[
\gamma = \left( \int_\Omega |w|^{1-\delta} f \, dx \right)^{-\frac{1}{1-\delta}},
\]
and hence  
\[
w = \pm \gamma^{-1} V_\delta = \pm \gamma^{-1} \tau_\delta u_\delta.
\]
\qed
\end{enumerate}

\noindent {\textsf{Prashanta Garain\\Department of Mathematical Sciences\\
Indian Institute of Science Education and Research Berhampur\\ Permanent Campus, At/Po:-Laudigam, Dist.-Ganjam\\
Odisha, India-760003
}\\ 
\textsf{e-mail}: pgarain92@gmail.com\\

\end{document}